\documentclass[]{article}
\usepackage{graphicx}
\usepackage{amsfonts}
\usepackage{amsmath}
\usepackage{amssymb}
\usepackage{fancyhdr}
\usepackage{titlesec}
\usepackage{indentfirst}
\usepackage{booktabs}
\usepackage{verbatim}
\usepackage{color}
\usepackage{amsthm}
\usepackage{subfigure}

\usepackage[page,header]{appendix}
\usepackage{titletoc}

\newcommand{\rd}{\,\mathrm{d}}

\numberwithin{equation}{section}
\newtheorem{theorem}{Theorem}[section]
\newtheorem{lemma}[theorem]{Lemma}

\newtheorem{proposition}[theorem]{Proposition}
\newtheorem{definition}[theorem]{Definition}
\newtheorem{remark}[theorem]{Remark}

\def\eps{\varepsilon}


\topmargin 0cm \oddsidemargin 0.28cm \evensidemargin 0.28cm
\textwidth 16cm \textheight 22.8cm

\headheight 0cm
\headsep 0cm

\begin{document}

\title{Uniform accuracy of implicit-explicit Runge-Kutta (IMEX-RK) schemes for hyperbolic systems with relaxation}

\author{Jingwei Hu\footnote{Department of Applied Mathematics, University of Washington, Seattle, WA 98195 (hujw@uw.edu).} \
	\ and \  Ruiwen Shu\footnote{Department of Mathematics, University of Georgia, Athens, GA 30602 (ruiwen.shu@uga.edu).} }    
	\maketitle
	
\begin{abstract}
Implicit-explicit Runge-Kutta (IMEX-RK) schemes are popular methods to treat multiscale equations that contain a stiff part and a non-stiff part, where the stiff part is characterized by a small parameter $\varepsilon$. In this work, we prove rigorously the uniform stability and uniform accuracy of a class of IMEX-RK schemes for a linear hyperbolic system with stiff relaxation. The result we obtain is optimal in the sense that it holds regardless of the value of $\varepsilon$ and the order of accuracy is the same as the design order of the original scheme, i.e., there is no order reduction.
\end{abstract}

{\small 
{\bf Key words.} Hyperbolic relaxation system, stiff, implicit-explicit Runge-Kutta method, energy estimate.

{\bf AMS subject classifications.} 35L03, 65L04, 65L06, 65M12.
}

\section{Introduction}

Many hyperbolic and kinetic equations exhibit small temporal and spatial scales, leading to different asymptotic limits \cite{Jin22}. One simple example is the following linear hyperbolic system with relaxation \cite{CLL94}:
\begin{equation}\label{eq2}\left\{\begin{split}
& \partial_t u + \partial_x v = 0, \\
& \partial_t v + \partial_x u = \frac{1}{\eps}(bu-v),
\end{split}\right.\end{equation}
where $u=u(t,x)$, $v=v(t,x)$ are unknown functions of time $t$ and position $x$, $b$ is a constant satisfying $|b|<1$, and $\varepsilon>0$ is the relaxation parameter. The value of $\varepsilon$ can range from $\varepsilon =O(1)$ (non-stiff regime) to $\varepsilon \ll 1$ (stiff regime). In particular, when $\varepsilon\rightarrow 0$, it is easy to see that the formal asymptotic limit of (\ref{eq2}) is a convection equation:
\begin{equation}
\partial_t u +\partial_x(bu)=0;
\end{equation}
if keeping the $O(\varepsilon)$ term, one can obtain a convection-diffusion equation:
\begin{equation}
\partial_t u +\partial_x(bu)=\varepsilon(1-b^2)\partial_{xx}u.
\end{equation}

Due to its multiscale nature, a popular time discretization method for system (\ref{eq2}) is the implicit-explicit (IMEX) schemes, including the IMEX Runge-Kutta (RK) methods (e.g., \cite{ARS97, KC03, PR05, DP13}) and IMEX multistep methods (e.g., \cite{ARW95, DP17, ADP20}). In these schemes, the non-stiff convection term is treated explicitly and the possibly stiff relaxation term is treated implicitly. As such, the schemes are expected to be stable under the CFL condition coming only from the convection part, hence are efficient regardless of the value of $\varepsilon$. Furthermore, the accuracy of the IMEX schemes is guaranteed (i.e., there is no order reduction) when $\varepsilon=O(1)$ by their design and when $\varepsilon\rightarrow 0$ if they are asymptotic-preserving \cite{Jin99}. However, how these schemes behave in the intermediate regime ($0<\varepsilon<O(1)$) is a difficult problem. Most studies are based on asymptotic analysis hence only address the issue to a certain extent \cite{Boscarino07, HZ17} (the error estimates typically depend on both time step $\Delta t$ and $\varepsilon$).

In our previous work \cite{HS21}, we proved rigorously the {\it uniform accuracy} of IMEX-BDF schemes, a class of IMEX multistep methods, applied to system (\ref{eq2}). Our result can be simply summarized as follows:
\begin{equation} \label{error}
\|u(T,\cdot)-U(T,\cdot)\|_{L^2}+\|v(T,\cdot)-V(T,\cdot)\|_{L^2} \leq C \Delta t^q,
\end{equation}
where $U$ and $V$ are the numerical solutions at time $T$, $q$ is the order of the scheme, and $C$ is a constant depending on $T$, $q$, etc., but {\it independent of $\eps$}. Note that this is an optimal error bound that holds for any values of $\eps$, so the accuracy of the scheme is guaranteed in all regimes!

The goal of this work is to establish a similar result for IMEX-RK schemes. This turns out to be much more delicate. Based on the previous study \cite{Boscarino07}, it is known that most of the popular IMEX-RK schemes will suffer from the order reduction in the intermediate regime. For example, the widely used ARS(4,4,3) scheme \cite{ARS97}, which is a third order method by design, will reduce to second order when $\varepsilon$ is somewhere between 0 and 1 (in the sense of (\ref{error})). Therefore, an optimal uniform accuracy result as in (\ref{error}) cannot be expected for general IMEX-RK schemes. Nevertheless, there are some exceptions: 1) It has been numerically observed that the second order ARS(2,2,2) scheme can maintain the uniform second-order accuracy for a wide range of $\varepsilon$; 2) In \cite{Boscarino09, BR09}, a third order IMEX-RK scheme, BHR(5,5,3), is constructed by imposing additional order conditions and the uniform third-order accuracy is observed in several test problems. The structure of these two special IMEX-RK schemes exactly motivates our current study. Using the energy estimates, we are able to prove that, for the prototype problem (\ref{eq2}), a class of second and third order IMEX-RK schemes can maintain their order of accuracy regardless of the value of $\varepsilon$. As a byproduct, we also answer rigorously why certain IMEX-RK schemes, such as ARS(4,4,3), exhibit reduced uniform second order accuracy.

The rest of this paper is organized as follows. In Section~\ref{sec:pre}, we recall the regularity of the solution to (\ref{eq2}), its IMEX-RK time discretization as well as spatial discretization. We establish the uniform stability of a class of IMEX-RK schemes in Section~\ref{sec:uniform_stability}. We then prove, respectively, in Section~\ref{sec_second} and Section~\ref{sec_third}, the uniform second order accuracy and uniform third order accuracy of the IMEX-RK schemes. Numerical tests are presented in Section~\ref{sec:num} to validate the theoretical results obtained in this paper.

\section{Preliminaries}
\label{sec:pre}

In this section we present some basic settings of this paper, including the regularity of the solution, the time discretization, and the spatial discretization. We always assume $x \in [0, 2\pi]$ with periodic boundary condition. All integrals without range refer to $\int_0^{2\pi} \cdot \,\rd{x}$ and all norms $\|\cdot \|$ without subscript refer to the $L^2$ norm in $x$. $c$ and $C$ denote small/large positive constants independent of $\varepsilon$ ($c$ and $C$ may change from line to line).

Our starting point to treat (\ref{eq2}) is to introduce a change of variable $w=v-bu$ and formulate the system equivalently as
\begin{equation}\label{eq1}\left\{\begin{split}
& \partial_t u + \partial_x (bu+w) = 0, \\
& \partial_t w + \partial_x ((1-b^2)u-bw) = -\frac{1}{\eps}w.
\end{split}\right.\end{equation}

In \cite[Theorem 3.1]{HS21} we proved the following regularity result of the solution to \eqref{eq1}, which we cite without proof.
\begin{lemma}\label{lem_reg}
For any integer $s\ge 0$, assume 
\begin{equation}\label{thm_reg_0}
\|u(0)\|_{H^s}^2 + \|w(0)\|_{H^s}^2 =: E_0 < \infty.
\end{equation}
Then for all $t\ge s\eps \log(1/\eps)$, the solution to \eqref{eq1} satisfies
\begin{equation} \label{energy1}
\|u(t)\|_{H^s}^2 + \|w(t)\|_{H^s}^2 \le CE_0,
\end{equation}
also
\begin{equation}\label{thm_reg_1}
\|\partial_t^{r_1}\partial_x^{r_2} u(t)\|^2 + \|\partial_t^{r_1}\partial_x^{r_2} w(t)\|^2 \le CE_0,\quad r_1+r_2 \le s,
\end{equation}
and
\begin{equation}\label{thm_reg_2}
\|\partial_t^{r_1}\partial_x^{r_2} w(t)\|^2 \le CE_0\eps^2, \quad r_1+r_2 \le s-1.
\end{equation}
\end{lemma}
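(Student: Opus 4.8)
The plan is to prove all three estimates by the energy method, exploiting that \eqref{eq1} is linear with constant coefficients so that every spatial derivative $\partial_x^k$ commutes with the evolution. The starting point is a symmetrizing weight: setting $E_k(t) := \int \big[(1-b^2)(\partial_x^k u)^2 + (\partial_x^k w)^2\big]\rd x$ and differentiating along \eqref{eq1}, the cross terms $\int \partial_x^k u\,\partial_x^{k+1}w$ and $\int \partial_x^k w\,\partial_x^{k+1}u$ combine into a total $x$-derivative and integrate to zero by periodicity, leaving the clean dissipation identity
\[
\frac{1}{2}\frac{\rd}{\rd t} E_k(t) = -\frac{1}{\eps}\|\partial_x^k w\|^2 \le 0.
\]
Since $|b|<1$, $E_k$ is equivalent to $\|\partial_x^k u\|^2 + \|\partial_x^k w\|^2$; summing over $k \le s$ therefore yields \eqref{energy1} (and the purely spatial cases $r_1=0$ of \eqref{thm_reg_1}) for all $t \ge 0$, with no waiting time required.

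Next I would establish the $O(\eps)$ smallness of $w$ in \eqref{thm_reg_2}. Differentiating $\|\partial_x^k w\|^2$ along the second equation of \eqref{eq1}, the term $\int \partial_x^k w\,\partial_x^{k+1}w$ again vanishes by periodicity, leaving $\frac12\frac{\rd}{\rd t}\|\partial_x^k w\|^2 = -(1-b^2)\int \partial_x^k w\,\partial_x^{k+1} u - \frac{1}{\eps}\|\partial_x^k w\|^2$. Bounding the source by Young's inequality with a weight chosen to absorb part of the dissipation, and using the already-proven bound $\|\partial_x^{k+1}u\|^2 \le CE_0$, gives a differential inequality of the form $\frac{\rd}{\rd t}\|\partial_x^k w\|^2 \le -\frac{c}{\eps}\|\partial_x^k w\|^2 + C\eps E_0$. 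An integrating factor then produces $\|\partial_x^k w(t)\|^2 \le e^{-ct/\eps}\|\partial_x^k w(0)\|^2 + C\eps^2 E_0$, so the initial transient is damped below $C\eps^2 E_0$ once $t$ exceeds an initial layer of width $O(\eps\log(1/\eps))$, which is the origin of the threshold.

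The remaining and most delicate part is the smallness of the mixed derivatives of $w$ with $r_1 \ge 1$ in \eqref{thm_reg_2}, together with the time-derivative cases of \eqref{thm_reg_1}. Here one converts time derivatives into spatial ones through the equations; the obstacle is that each application of $\partial_t$ to the $w$-equation reintroduces the singular factor $\frac{1}{\eps}w$, so that naively $\partial_t w = O(1)$ rather than $O(\eps)$. The fix is a \emph{Chapman--Enskog} correction: the leading balance $\frac{1}{\eps}w \approx -(1-b^2)\partial_x u$ suggests subtracting $w_1 := -\eps(1-b^2)\partial_x u$ and estimating the corrected quantity $z := w + \eps(1-b^2)\partial_x u$. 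A direct computation shows that $z$ solves an equation of the same dissipative type, $\partial_t z = -\frac{1}{\eps}z + b\partial_x z + \eps\,(\text{bounded spatial derivatives of } u,w)$, so the energy argument of the previous step gives $\|z(t)\|^2 \le e^{-ct/\eps}\|z(0)\|^2 + C\eps^4 E_0$, i.e. $z = O(\eps^2)$ after one more initial layer. Feeding this back into the equation yields $\partial_t w = b\partial_x w - \frac{1}{\eps}z = O(\eps)$. Iterating this corrector construction to higher order and inducting on the total number of derivatives propagates the smallness to all $r_1 + r_2 \le s-1$; because each successive correction must wait out its own $O(\eps\log(1/\eps))$ transient, the thresholds stack up to the stated $t \ge s\eps\log(1/\eps)$.

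I expect the bookkeeping in this last step to be the main obstacle: tracking the hierarchy of Chapman--Enskog correctors, verifying at each level that the corrected variable still obeys a relaxation equation with an $O(\eps^{\mathrm{order}})$ forcing built only from derivatives already controlled by \eqref{energy1}, and confirming that the accumulated initial layers do not exceed $s\eps\log(1/\eps)$. The energy identities themselves are routine once the symmetrizing weight $(1-b^2,\,1)$ is identified; the difficulty lies entirely in the induction that couples \eqref{thm_reg_1} and \eqref{thm_reg_2}.
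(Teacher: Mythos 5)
This paper never proves Lemma \ref{lem_reg}: it is quoted from \cite[Theorem 3.1]{HS21} explicitly ``without proof,'' so there is no in-paper argument to compare against and your proposal must be judged on its own. Your overall strategy is the natural one and its executed parts are correct: the symmetrizer $(1-b^2,1)$ does give the clean identity $\frac{1}{2}\frac{\rd}{\rd t}E_k=-\frac{1}{\eps}\|\partial_x^k w\|^2$ (the cross terms combine into $\partial_x(\partial_x^k u\,\partial_x^k w)$), the damped equation for $\partial_x^k w$ gives the $O(\eps)$ bound \eqref{thm_reg_2} for $r_1=0$, and your level-one Chapman--Enskog corrector is right: a direct computation gives $\partial_t z=-\frac{1}{\eps}z+b\partial_x z-2\eps b(1-b^2)\partial_{xx}u-\eps(1-b^2)\partial_{xx}w$, so $z=O(\eps^2)$ after a transient and hence $\partial_t w=b\partial_x w-\frac{1}{\eps}z=O(\eps)$.

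Two concrete gaps remain. First, the threshold arithmetic as you set it up does not deliver $t\ge s\eps\log(1/\eps)$. Young plus Gr\"onwall on \emph{squared} norms yields decay $e^{-2(1-\delta)t/\eps}$, and at corrector level $m$ you need this below $\eps^{2(m+1)}$, i.e.\ a wait of $\frac{m+1}{1-\delta}\eps\log(1/\eps)$, strictly exceeding the stated threshold; worse, your picture that the layers ``stack up'' additively would give a threshold of order $s^2\eps\log(1/\eps)$. The correct picture is nested waits, not summed ones, and the clean fix is Duhamel at the level of norms: since $b\partial_x$ generates an isometry group on $L^2$, the mild form gives $\|z_m(t)\|\le e^{-t/\eps}\|z_m(0)\|+C\eps^{m+1}\sqrt{E_0}$, so level $m$ needs only $t\ge(m+1)\eps\log(1/\eps)$, and since handling $r_1=m$ requires $s\ge m+1$, the worst case is exactly $s\eps\log(1/\eps)$. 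Second, and more substantively, the induction that constitutes most of \eqref{thm_reg_1}--\eqref{thm_reg_2} is deferred rather than done. To bound $\partial_t^{r_1}\partial_x^{r_2}w$ for $r_1\ge 2$ you must control $\partial_t^{r_1-1}\partial_x^{r_2}z$ to accuracy $O(\eps^2)$, i.e.\ you need the corrector estimates to survive additional $t$- and $x$-differentiation; each $\partial_t$ falling on $z$ reintroduces a $\frac{1}{\eps}$ factor that must be cancelled by the next corrector in the hierarchy, applied to the differentiated equations. Verifying that this closes (and that the forcing at each level is built only from quantities already bounded by \eqref{energy1}) is the bulk of the proof, not an afterthought; as written, your argument establishes the lemma only for $r_1\le 1$.
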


\subsection{IMEX-RK schemes for time discretization}
\label{subsec:IMEX}

A general $s$-stage IMEX-RK scheme applied to the system (\ref{eq1}) consists of an explicit treatment to the non-stiff convection term and an implicit one to the stiff relaxation term \cite{PR05}:
\begin{equation}\label{sch}\begin{split}
& U^{(i)} = U^n -\Delta t \sum_{j=1}^{i-1} \tilde{a}_{i j} (b\partial_x U^{(j)} + \partial_x W^{(j)}),  \quad i=1,\dots,s,\\
& W^{(i)} = W^n -\Delta t \sum_{j=1}^{i-1}\tilde{a}_{i j} ((1-b^2)\partial_x U^{(j)} - b\partial_x W^{(j)}) - \frac{\Delta t}{\varepsilon}\sum_{j=1}^i a_{i j}W^{(j)}, \quad i=1,\dots,s,\\
& U^{n+1} = U^n -\Delta t \sum_{j=1}^{s} \tilde{b}_j (b\partial_x U^{(j)} + \partial_x W^{(j)}), \\
& W^{n+1} = W^n -\Delta t \sum_{j=1}^{s}\tilde{b}_j ((1-b^2)\partial_x U^{(j)} - b\partial_x W^{(j)}) - \frac{\Delta t}{\varepsilon}\sum_{j=1}^s b_j W^{(j)}, 
\end{split}\end{equation}
where the matrix $\tilde{A}=(\tilde{a}_{ij})\in \mathbb{R}^{s\times s}$ is strictly lower-triangular ($\tilde{a}_{ij}=0$ for $j\geq i$) and the matrix $A=(a_{ij})\in \mathbb{R}^{s\times s}$ is lower-triangular ($a_{ij}=0$ for $j>i$). Along with the vectors $\vec{\tilde{b}}=(\tilde{b}_1,\dots,\tilde{b}_s)$ and $\vec{b}=(b_1,\dots,b_s)$ 
they can be represented by a double Butcher tableau:
\begin{equation} \label{tableau}
\centering
\begin{tabular}{c|c}
$\vec{\tilde{c}}$ & $\tilde{A}$\\
\hline
& $\vec{\tilde{b}}$
\end{tabular} \quad \quad
\begin{tabular}{c|c}
$\vec{c}$ & $A$\\
\hline
& $\vec{b}$
\end{tabular}
\end{equation}
with the vectors $\vec{\tilde{c}}=(\tilde{c}_1,\dots,\tilde{c}_s)^\top$ and $\vec{c}=(c_1,\dots,c_s)^\top$ defined by
\begin{equation} 
\tilde{c}_i=\sum_{j=1}^{i-1}\tilde{a}_{ij}, \quad c_i=\sum_{j=1}^{i}a_{ij}.
\end{equation}
The tableau (\ref{tableau}) must satisfy the standard order conditions \cite{PR05}. According to the structure of matrix $A$ in the implicit tableau, the IMEX-RK schemes can be classified into several categories \cite{BPR13, DP13}. In this paper, we restrict our study to the IMEX-RK schemes of type CK \cite{KC03} and with implicitly-stiffly-accurate (ISA) property:
\begin{itemize}
\item {\bf Type CK}: if the matrix $A$ can be written as
\begin{equation}  \label{CK}
\left(
 \begin{matrix}
  0 & 0 \\
  \vec{a} & \hat{A}
  \end{matrix}
\right),
\end{equation}
where the vector $\vec{a}\in \mathbb{R}^{s-1}$ and the submatrix $\hat{A}\in \mathbb{R}^{(s-1)\times (s-1)}$ is invertible; in particular, if $\vec{a}=\vec{0}$, $b_1=0$, the scheme is of type ARS \cite{ARS97}.
\item If $a_{s i}=b_i$, $i=1,\dots,s$, the scheme is said to be {\bf implicitly stiffly accurate (ISA)}\footnote{If, in addition, $\tilde{a}_{si}=\tilde{b}_i$, $i=1,\dots,s$, the scheme is said to be {\bf globally stiffly accurate (GSA)}.}.
\end{itemize}

Therefore, if the scheme is of type CK, we can see that $U^{(1)}=U^n$ and $W^{(1)}=W^n$. Define the vectors
\begin{equation}\label{vecUW}
\vec{U} = (U^{(1)},\dots,U^{(s)})^\top,\quad \vec{W} = (W^{(1)},\dots,W^{(s)})^\top,\quad \vec{e}=(1,\dots,1)^\top,
\end{equation}
we may rewrite the first $s$ stages in \eqref{sch} as
\begin{equation}\label{sch_vec}\begin{split}
& \vec{U} = U^{n}\vec{e} -\Delta t \tilde{A} (b\partial_x \vec{U} + \partial_x \vec{W}),  \\
& \vec{W} = W^{n}\vec{e} -\Delta t \tilde{A} ((1-b^2)\partial_x \vec{U} - b\partial_x \vec{W})  - \frac{\Delta t}{\varepsilon} A \vec{W}.
\end{split}\end{equation}

\subsection{Spatial discretization}

For spatial discretization, we adopt the same Fourier-Galerkin spectral method as in \cite{HS21}. Consider the space of trigonometric polynomials of degree up to $N$:
\begin{equation}
\mathbb{P}_N = \text{span}\{e^{ikx}:-N\le k \le N\}
\end{equation}
equipped with inner product
\begin{equation}
\langle f,g \rangle = \frac{1}{2\pi}\int f \bar{g}\rd{x}.
\end{equation}
For a function $f\in L^2[0,2\pi]$, denote $\mathcal{P}_N f$ as its orthogonal projection onto $\mathbb{P}_N$. We have the following basic facts:
\begin{lemma} \label{Fourier1}
For any $2\pi$-periodic function $f(x)\in H^s[0,2\pi]$, there holds
\begin{equation}
\|(I-\mathcal{P}_N)f\|\leq \frac{1}{N^s}\|f\|_{H^s}.
\end{equation}
\end{lemma}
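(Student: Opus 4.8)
The plan is to pass to Fourier coefficients and reduce the estimate to an elementary comparison of weighted $\ell^2$ sums. Writing $f = \sum_{k \in \mathbb{Z}} \hat{f}_k e^{ikx}$ with $\hat{f}_k = \langle f, e^{ikx}\rangle$, the orthogonality relation $\langle e^{ikx}, e^{ilx}\rangle = \delta_{kl}$ under the normalized inner product makes $\{e^{ikx}\}$ an orthonormal basis, and the projection $\mathcal{P}_N$ simply truncates the series to $|k| \le N$. Hence the residual is the high-frequency tail $(I - \mathcal{P}_N)f = \sum_{|k| > N} \hat{f}_k e^{ikx}$, and everything reduces to estimating a tail sum by a full weighted sum.

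First I would apply Parseval's identity to express both the error norm and the $H^s$ norm through $\{\hat{f}_k\}$. Since $\partial_x^r e^{ikx} = (\im k)^r e^{ikx}$, the $H^s$ norm (summing $\|\partial_x^r \cdot\|^2$ over $0 \le r \le s$) dominates the top-order seminorm piece, so $\|f\|_{H^s}^2 \ge 2\pi \sum_{k} |k|^{2s}|\hat{f}_k|^2$, while the truncation error is exactly $\|(I-\mathcal{P}_N)f\|^2 = 2\pi \sum_{|k|>N}|\hat{f}_k|^2$, with the same factor $2\pi$ arising from the chosen $L^2$ normalization.

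The core step is the pointwise inequality $1 \le |k|^{2s}/N^{2s}$, valid for every $|k| > N$ (equivalently $|k| \ge N+1 > N$), which lets me insert the weight $|k|^{2s}$ into the tail sum at the cost of the factor $N^{-2s}$: $\sum_{|k|>N}|\hat{f}_k|^2 \le N^{-2s}\sum_{|k|>N}|k|^{2s}|\hat{f}_k|^2 \le N^{-2s}\sum_{k}|k|^{2s}|\hat{f}_k|^2$. Combining this with the two Parseval identities above and taking square roots yields $\|(I-\mathcal{P}_N)f\| \le N^{-s}\|f\|_{H^s}$, with the normalization factors cancelling between the two sides so that the constant is exactly $1$.

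The argument is essentially routine and I expect no genuine obstacle; the only care needed is bookkeeping. I would verify that the $2\pi$ relating the normalized inner product $\langle\cdot,\cdot\rangle$ to the unnormalized $L^2$ norm $\|\cdot\|$ appears identically on both sides so that it drops out, and confirm that the convention adopted for $\|\cdot\|_{H^s}$ indeed majorizes the pure top-order seminorm $\sum_k |k|^{2s}|\hat{f}_k|^2$ used in the estimate. No smoothness beyond $f \in H^s$ is required, and the bound is sharp up to the choice of retained modes.
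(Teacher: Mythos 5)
Your proof is correct: the paper states this lemma as a standard fact without proof, and your argument (Parseval for the tail of the Fourier series, the pointwise weight insertion $1\le |k|^{2s}/N^{2s}$ for $|k|>N$, and the observation that $\|f\|_{H^s}$ dominates the top-order seminorm) is exactly the standard way to establish it. Your bookkeeping of the $2\pi$ normalization between the inner product $\langle\cdot,\cdot\rangle$ and the unnormalized norm $\|\cdot\|$ is also handled correctly, since the factor appears on both sides and cancels.
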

\begin{lemma} \label{Fourier2}
For any function $\phi(x)\in \mathbb{P}_N$, there holds
\begin{equation}
\| \phi^{(s)}\|\leq N^{s}\|\phi\|.
\end{equation}
In particular, if $\Delta t \le c_{CFL}/N^2$, where $c_{CFL}>0$ is some constant, one has
\begin{equation}\label{CFL1}
\Delta t\| \partial_x\phi \|^2 \le c_{CFL} \| \phi \|^2,\quad \forall \, \phi \in \mathbb{P}_N.
\end{equation}
\end{lemma}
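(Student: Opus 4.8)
The plan is to diagonalize the differentiation operator in the Fourier basis and then exploit that $\phi \in \mathbb{P}_N$ contains only modes with $|k| \le N$. First I would write $\phi(x) = \sum_{k=-N}^{N} \hat\phi_k e^{\im kx}$ and differentiate termwise to get $\phi^{(s)}(x) = \sum_{k=-N}^{N} (\im k)^s \hat\phi_k e^{\im kx}$. Since the exponentials $\{e^{\im kx}\}$ are orthogonal on $[0,2\pi]$, Parseval's identity gives $\|\phi^{(s)}\|^2 = C_0 \sum_{k=-N}^{N} |k|^{2s}|\hat\phi_k|^2$ and $\|\phi\|^2 = C_0 \sum_{k=-N}^{N} |\hat\phi_k|^2$ with the same normalization constant $C_0$ (equal to $2\pi$ for the unnormalized $L^2$ norm; it will cancel).

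The key estimate is then elementary: every index in the sum satisfies $|k| \le N$, hence $|k|^{2s} \le N^{2s}$. Substituting this bound and factoring out $N^{2s}$ yields $\|\phi^{(s)}\|^2 \le N^{2s}\, C_0 \sum_{k=-N}^{N} |\hat\phi_k|^2 = N^{2s}\|\phi\|^2$, and taking square roots gives the claimed Bernstein-type inequality $\|\phi^{(s)}\| \le N^s \|\phi\|$.

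For the CFL consequence I would specialize to $s=1$, obtaining $\|\partial_x \phi\|^2 \le N^2 \|\phi\|^2$. Multiplying by $\Delta t$ and invoking the hypothesis $\Delta t \le c_{CFL}/N^2$ produces $\Delta t\,\|\partial_x\phi\|^2 \le (c_{CFL}/N^2)\,N^2\,\|\phi\|^2 = c_{CFL}\|\phi\|^2$, which is exactly \eqref{CFL1}.

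There is no genuine obstacle here: this is the standard inverse (Bernstein) inequality for trigonometric polynomials of degree at most $N$, and the estimate is sharp (attained by a single mode $e^{\im Nx}$). The only point requiring mild care is bookkeeping the normalization in the inner product $\langle f,g\rangle = \frac{1}{2\pi}\int f\bar g\rd x$ against the $L^2$ norm, but since the same constant $C_0$ sits on both sides of Parseval's identity it cancels and does not affect the final bound.
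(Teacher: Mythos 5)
Your proof is correct: the termwise differentiation, Parseval's identity, the bound $|k|^{2s}\le N^{2s}$, and the specialization to $s=1$ under $\Delta t\le c_{CFL}/N^2$ are all sound, and the normalization constant indeed cancels. The paper states this lemma without proof as a standard fact (the inverse/Bernstein inequality for trigonometric polynomials), and your argument is exactly the canonical one it implicitly relies on, so there is nothing to reconcile.
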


The Fourier-Galerkin spectral method for (\ref{sch}) seeks to approximate $U^{(i)}$, $W^{(i)}$ as
\begin{equation} \label{FG}
U^{(i)}(x) \approx \sum_{k=-N}^N U^{(i)}_k e^{ikx} =: U^{(i)}_N(x), \quad W^{(i)}(x) \approx \sum_{k=-N}^N W^{(i)}_k e^{ikx} =: W^{(i)}_N(x),
\end{equation}
similarly for $U^{n+1}$ and $W^{n+1}$. Substituting (\ref{FG}) into \eqref{sch} and conducting the Galerkin projection yields the fully discrete scheme
\begin{equation}\label{sch_full}\begin{split}
& U^{(i)}_N = U^n_N -\Delta t \sum_{j=1}^{i-1} \tilde{a}_{i j} (b\partial_x U^{(j)}_N + \partial_x W_N^{(j)}), \\
& W^{(i)}_N = W^n_N -\Delta t \sum_{j=1}^{i-1}\tilde{a}_{i j} ((1-b^2)\partial_x U^{(j)}_N - b\partial_x W^{(j)}_N) - \frac{\Delta t}{\varepsilon}\sum_{j=1}^i a_{i j}W^{(j)}_N,\\
& U^{n+1}_N = U^n_N -\Delta t \sum_{j=1}^{s} \tilde{b}_j (b\partial_x U^{(j)}_N + \partial_x W^{(j)}_N), \\
& W^{n+1}_N = W^n_N -\Delta t \sum_{j=1}^{s}\tilde{b}_j ((1-b^2)\partial_x U^{(j)}_N - b\partial_x W^{(j)}_N) - \frac{\Delta t}{\varepsilon}\sum_{j=1}^s b_j W^{(j)}_N.
\end{split}\end{equation}
The initial condition is given by
\begin{equation}\label{UN0}
U_N^0=\mathcal{P}_N u_{in}(x), \quad  W_N^0=\mathcal{P}_N w_{in}(x),
\end{equation}
where $(u_{in},w_{in})$ denotes the initial condition of (\ref{eq1}).

Note that due to the linearity of (\ref{eq1}), the semi-discrete scheme (\ref{sch}) formally looks the same as the fully discrete scheme (\ref{sch_full}), except for the initial condition. In the following, we will refer to the notation in Section~\ref{subsec:IMEX} and neglect the subscript $N$ whenever it does not cause confusion in the context.

\section{Uniform stability}
\label{sec:uniform_stability}


As an attempt to prove the uniform stability of \eqref{sch} by energy estimates, we take an $s\times s$ matrix $M$ to be determined and aim to left multiply it to \eqref{sch_vec}. We may choose the first column of $M$ as zero without loss of generality since the first row of \eqref{sch_vec} is trivial.

Multiplying the $\vec{W}$ equation in (\ref{sch_vec}) by $\vec{W}^\top M$ from the left, we get a scalar equation
\begin{equation}
\vec{W}^\top M \vec{W} = W^{n}\vec{W}^\top M\vec{e} -\Delta t \vec{W}^\top M \tilde{A} ((1-b^2)\partial_x \vec{U} - b\partial_x \vec{W})  - \frac{\Delta t}{\varepsilon} \vec{W}^\top M A \vec{W},
\end{equation}
i.e.,
\begin{equation}\label{vec_energy1}
(W^{(s)})^2 = (W^{n})^2 -  \underline{\vec{W}^\top M_* \vec{W}}   - \underline{\frac{\Delta t}{\varepsilon} \vec{W}^\top M A \vec{W}} -\Delta t \vec{W}^\top M \tilde{A} ((1-b^2)\partial_x \vec{U} - b\partial_x \vec{W}),
\end{equation}
where
\begin{equation}
M_* := M \begin{pmatrix}
0 & 0 & 0 & \cdots & 0 \\
-1 & 1 & 0 & \cdots & 0 \\
-1 & 0 & 1 & \cdots & 0 \\
\cdots & \cdots & \cdots & \cdots & \cdots \\
-1 & 0 & 0 & \cdots & 1 \\
\end{pmatrix} + \begin{pmatrix}
1 & 0 & 0 & \cdots & 0 \\
0 & 0 & 0 & \cdots & 0 \\
0 & 0 & 0 & \cdots & 0 \\
\cdots & \cdots & \cdots & \cdots & \cdots \\
0 & 0 & 0 & \cdots & -1 \\
\end{pmatrix}.
\end{equation}
If the two underlined terms are semi-positive-definite quadratic forms in $\vec{W}$, then we can gain good terms in an energy estimate for $\vec{W}$. This motivates the following conditions.

For the matrix $A$ in a type CK IMEX-RK scheme, we assume there exists a matrix $M$ such that
\begin{itemize}
\item[{\bf (M1)}] $M A+(MA)^\top$ is semi-positive-definite and has rank $s-1$.
\item[{\bf (M2)}] $M_*+M_*^\top$ is semi-positive-definite and has rank $s-1$.
\end{itemize}

We first state a lemma on semi-positive-definite matrices.
\begin{lemma}\label{lem_P}
Let $P$ be an $s\times s$ symmetric semi-positive-definite matrix with rank $s-1$. Assume $\vec{u}\in\mathbb{R}^s$ satisfies $\vec{u}^\top P \vec{u}=0$. Then $P\vec{u}=\vec{0}$, and the quadratic form $\vec{x}^\top P \vec{x}$ is positive-definite when restricted to the orthogonal complement of $\vec{u}$.
\end{lemma}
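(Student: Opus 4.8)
The plan is to exploit the spectral structure of a symmetric positive semi-definite matrix. First I would write $P$ via its eigendecomposition $P = \sum_{i=1}^s \lambda_i \vec{q}_i \vec{q}_i^\top$, where $\{\vec{q}_i\}_{i=1}^s$ is an orthonormal eigenbasis and $\lambda_1 \ge \cdots \ge \lambda_s \ge 0$. Since $P$ is semi-positive-definite with rank $s-1$, exactly one eigenvalue vanishes; after relabeling we have $\lambda_1,\dots,\lambda_{s-1}>0$ and $\lambda_s=0$, so that $\ker P = \operatorname{span}\{\vec{q}_s\}$ is one-dimensional.

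For the first claim, expand $\vec{u}$ in the eigenbasis as $\vec{u} = \sum_{i=1}^s (\vec{q}_i^\top \vec{u})\,\vec{q}_i$, so that $\vec{u}^\top P \vec{u} = \sum_{i=1}^s \lambda_i (\vec{q}_i^\top \vec{u})^2 = \sum_{i=1}^{s-1}\lambda_i(\vec{q}_i^\top \vec{u})^2$. This is a sum of nonnegative terms, hence the hypothesis $\vec{u}^\top P \vec{u}=0$ forces $\vec{q}_i^\top \vec{u}=0$ for every $i\le s-1$ (where $\lambda_i>0$). Therefore $\vec{u}\in\operatorname{span}\{\vec{q}_s\}=\ker P$, which gives $P\vec{u}=\vec{0}$. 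Equivalently, one may factor $P=P^{1/2}P^{1/2}$ with $P^{1/2}$ the symmetric square root and note that $\vec{u}^\top P\vec{u}=\|P^{1/2}\vec{u}\|^2=0$ yields $P^{1/2}\vec{u}=\vec{0}$ and hence $P\vec{u}=\vec{0}$.

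For the second claim I would take $\vec{u}\ne\vec{0}$ (the case of interest; since $\ker P$ is one-dimensional, $\vec{u}$ then spans it). Because $P$ is symmetric, $\mathbb{R}^s$ decomposes orthogonally as $\ker P \oplus \operatorname{ran} P$, and since $\vec{u}$ spans $\ker P$, its orthogonal complement is exactly $\operatorname{ran} P = \operatorname{span}\{\vec{q}_1,\dots,\vec{q}_{s-1}\}$. For any nonzero $\vec{x}=\sum_{i=1}^{s-1}c_i\vec{q}_i$ in this subspace one computes $\vec{x}^\top P \vec{x}=\sum_{i=1}^{s-1}\lambda_i c_i^2>0$, because every $\lambda_i$ appearing is strictly positive and $\vec{x}\ne\vec{0}$ means some $c_i\ne0$. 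Hence the restricted quadratic form is positive-definite.

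There is essentially no hard step: the argument is elementary linear algebra once the eigendecomposition is in hand. The only point that requires care is the use of the rank-$(s-1)$ hypothesis to guarantee that $\ker P$ is precisely one-dimensional and spanned by $\vec{u}$, so that the orthogonal complement of $\vec{u}$ coincides with $\operatorname{ran} P$, on which $P$ acts nondegenerately.
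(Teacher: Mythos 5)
Your proof is correct and follows essentially the same route as the paper: both arguments rest on the spectral decomposition of $P$, identifying the orthogonal complement of $\vec{u}$ with the span of the eigenvectors carrying positive eigenvalues, on which the quadratic form is manifestly positive. The only difference is cosmetic: the paper first establishes $P\vec{u}=\vec{0}$ by a perturbation argument (expanding $(\vec{u}-\alpha P\vec{u})^\top P(\vec{u}-\alpha P\vec{u})\ge 0$ and letting $\alpha\to 0$) rather than reading it off the eigenbasis expansion as you do, and it tacitly assumes $\vec{u}\neq\vec{0}$ by normalizing $|\vec{u}|=1$, a case you handle slightly more explicitly.
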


\begin{proof}
We first show that $P\vec{u}=\vec{0}$. In fact, denoting $P \vec{u}=\vec{\xi}$, we have
\begin{equation}
0\le (\vec{u}-\alpha\vec{\xi})^\top P (\vec{u}-\alpha\vec{\xi}) = -2\alpha |\vec{\xi}|^2+ \alpha^2 \vec{\xi}^\top P\vec{\xi}
\end{equation}
for any $\alpha>0$. Sending $\alpha\rightarrow 0$, we get $\vec{\xi}=\vec{0}$.

We may assume $|\vec{u}|=1$ without loss of generality. Since $\vec{u}$ is an eigenvector of the symmetric matrix $P$ with eigenvalue 0, we may extend it to an orthonormal basis $\vec{u},\vec{u}_2,\dots,\vec{u}_s$ of $\mathbb{R}^s$ consisting of eigenvectors of $P$. Denoting $U=(\vec{u},\vec{u}_2,\dots,\vec{u}_s)$, we then have $P=UDU^\top$ where $D=\text{diag}\{0,\lambda_2,\dots,\lambda_s\}$ is a diagonal matrix with diagonal entries being eigenvalues of $P$. Since $P$ is semi-positive-definite matrix with rank $s-1$, the eigenvalues $\lambda_2,\dots,\lambda_s$ are positive. Therefore we see that
\begin{equation}
\vec{x}^\top P \vec{x} = \vec{x}^\top UDU^\top \vec{x} = \sum_{j=2}^s \lambda_j (\vec{u}_j\cdot \vec{x})^2.
\end{equation}
Therefore this quadratic form is positive-definite when restricted to $\text{Span}\{\vec{u}_2,\dots,\vec{u}_s\}$, i.e., the orthogonal complement of $\vec{u}$.
\end{proof}

We then have the following lemma:
\begin{lemma}\label{lem_v1}
Assume {\bf (M2)}, then there exists a constant $c>0$ such that
\begin{equation}
\vec{\xi}^\top M_* \vec{\xi} \ge c\sum_{1\le i < j \le s} |\xi_i-\xi_j|^2,
\end{equation}
for any vector $\vec{\xi}\in\mathbb{R}^s$.
\end{lemma}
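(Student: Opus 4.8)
The plan is to reduce the claimed inequality to a comparison between two semi-positive-definite quadratic forms that share the same one-dimensional kernel. First I would symmetrize the left-hand side: since $\vec{\xi}^\top M_* \vec{\xi}$ is a scalar it equals its own transpose, so $\vec{\xi}^\top M_* \vec{\xi} = \tfrac12 \vec{\xi}^\top (M_*+M_*^\top)\vec{\xi}$. By {\bf (M2)} the matrix $P := M_*+M_*^\top$ is symmetric, semi-positive-definite, and of rank $s-1$, so its kernel is one-dimensional. On the right-hand side a direct expansion gives $\sum_{1\le i<j\le s}|\xi_i-\xi_j|^2 = s|\vec{\xi}|^2 - (\vec{e}\cdot\vec{\xi})^2 = \vec{\xi}^\top(sI-\vec{e}\,\vec{e}^\top)\vec{\xi}$, a semi-positive-definite form of rank $s-1$ whose kernel is exactly $\mathrm{Span}\{\vec{e}\}$.

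The key step is to show that the kernel of $P$ is also $\mathrm{Span}\{\vec{e}\}$. For this I would compute $M_*\vec{e}$ directly from the definition of $M_*$: the first matrix in that definition annihilates $\vec{e}$, since each of its nonzero rows carries a $-1$ and a $+1$ that sum to zero, while the second (diagonal) matrix sends $\vec{e}$ to $(1,0,\dots,0,-1)^\top$. Hence $\vec{e}^\top P \vec{e} = 2\,\vec{e}^\top M_* \vec{e} = 1 - 1 = 0$. Applying Lemma~\ref{lem_P} with $\vec{u}=\vec{e}$ then forces $P\vec{e}=\vec{0}$, so $\vec{e}$ spans the one-dimensional kernel of $P$. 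Note that $\vec{e}$ is \emph{not} itself a null vector of $M_*$ (indeed $M_*\vec{e}\ne\vec{0}$); it is precisely Lemma~\ref{lem_P} that promotes the scalar identity $\vec{e}^\top M_*\vec{e}=0$ to $P\vec{e}=\vec{0}$.

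With both forms sharing the kernel $\mathrm{Span}\{\vec{e}\}$, I would conclude by an eigenvalue argument on the orthogonal complement. Decomposing $\vec{\xi}=\alpha\vec{e}+\vec{\eta}$ with $\vec{\eta}\perp\vec{e}$, the identity $P\vec{e}=\vec{0}$ gives $\vec{\xi}^\top M_*\vec{\xi} = \tfrac12\vec{\eta}^\top P\vec{\eta}$, and by the last statement of Lemma~\ref{lem_P} this form is positive-definite on the $(s-1)$-dimensional space $\vec{e}^\perp$, hence bounded below by $\lambda_{\min}|\vec{\eta}|^2$, where $\lambda_{\min}$ is the smallest positive eigenvalue of $P$. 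On the same subspace the right-hand side is exactly $s|\vec{\eta}|^2$, because $(\vec{e}\cdot\vec{\eta})^2$ vanishes. Taking $c=\lambda_{\min}/(2s)$ then yields $\vec{\xi}^\top M_*\vec{\xi}\ge c\sum_{i<j}|\xi_i-\xi_j|^2$.

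The only genuine obstacle is the kernel identification in the second step: the entire argument hinges on the degenerate direction of $M_*$ coinciding with $\vec{e}$, the degenerate direction of the target form. Once that coincidence is established, the remainder is the routine equivalence of two positive-definite quadratic forms on a common complementary subspace.
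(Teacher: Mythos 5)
Your proof is correct and follows essentially the same route as the paper's: both check $\vec{e}^\top(M_*+M_*^\top)\vec{e}=0$, invoke Lemma~\ref{lem_P} with $P=M_*+M_*^\top$ to conclude $P\vec{e}=\vec{0}$ and positive-definiteness on $\vec{e}^\perp$, and then estimate after splitting off the $\vec{e}$-component of $\vec{\xi}$. The only (minor) difference is the finish: the paper bounds each $|\xi_i-\xi_j|^2=|\vec{\eta}\cdot\vec{e}_{ij}|^2$ by Cauchy--Schwarz and sums over pairs, whereas you use the exact identity $\sum_{i<j}|\xi_i-\xi_j|^2=s|\vec{\xi}|^2-(\vec{e}\cdot\vec{\xi})^2=s|\vec{\eta}|^2$, which is a slightly cleaner way to carry out the same routine comparison.
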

\begin{proof}

The condition {\bf (M2)}, combined with the easily verified identity $\vec{e}^\top (M_* + M_*^\top) \vec{e}=0$, allows us to apply Lemma \ref{lem_P} with $P=M_* + M_*^\top$. This gives that  $(M_* + M_*^\top) \vec{e}=0$, and the matrix $M_* + M_*^\top$ is strictly positive-definite when restricted to the $\vec{e}^\perp$, the orthogonal complement of $\vec{e}$. Notice that each vector of the form $\vec{e}_{ij}:=(0,\dots,0,1,0,\dots,0,-1,0,\dots,0)$ with $i$-th entry 1 and $j$-th entry $-1$ lies in $\vec{e}^\perp$. Therefore, denoting $\vec{\eta}$ as the orthogonal projection of $\vec{\xi}$ onto $\vec{e}^\perp$, we have $\vec{\xi}^\top M_* \vec{\xi} = \vec{\eta}^\top M_* \vec{\eta} \ge c|\vec{\eta}|^2 \ge c|\vec{\eta}\cdot\vec{e}_{ij}|^2 = c|\vec{\xi}\cdot\vec{e}_{ij}|^2 =  c|\xi_i-\xi_j|^2$ for each $1\le i < j \le s$. Summing over $i,j$ and dividing by $s(s-1)/2$, we get the conclusion.
\end{proof}

A few IMEX-RK schemes, ARS(2,2,2), ARS(4,4,3), and BHR(5,5,3)* and their corresponding $M$ matrices are provided in Appendix~\ref{app:IMEX-RK}. We also discuss in Appendix~\ref{app:M} necessary conditions to find $M$.
\begin{remark}
Our method of the multiplier matrix $M$ is inspired by the stability analysis in \cite[Section 3.1]{FS16}. For the ARS type schemes such as ARS(2,2,2) and ARS(4,4,3), one can translate the corresponding energy estimates into our formulation and obtain the $M$ matrices, see Appendix \ref{app:IMEX-RK}. However, our method can handle more general IMEX-RK schemes of type CK: one example is the BHR(5,5,3)* scheme.
\end{remark}


We are ready to state our main result of this section.

\begin{theorem}[Uniform stability of IMEX-RK schemes]\label{thm_stab}
Consider the fully discrete scheme (\ref{sch_full})-(\ref{UN0}) for system (\ref{eq1}). Assume the time discretization is the IMEX-RK scheme of type CK and ISA, for which there exists a matrix $M$ satisfying {\bf (M1)} and {\bf (M2)}. Let $c_{CFL}$ be any fixed positive number. Then for any time $T>0$ and $n\in\mathbb{Z}_{\ge 0}$ with $n\Delta t\le T$, we have
\begin{equation}
(1-b^2)\|U^n_N\|^2+\|W^n_N\|^2 \le C((1-b^2)\|u_{in}\|^2+\|w_{in}\|^2),
\end{equation}
under the condition $\Delta t \leq \min(c_{CFL}/N^2,c)$. Here $c$ and $C$ are positive constants independent of $\varepsilon$, $N$ and $\Delta t$.
\end{theorem}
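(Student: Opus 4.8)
The plan is to run an energy estimate on the scheme written in the vector form \eqref{sch_vec}, using the multiplier matrix $M$ to extract dissipation from the implicit relaxation term and control from the stage-to-stage differences. I would start from the identity \eqref{vec_energy1} obtained by multiplying the $\vec{W}$-equation by $\vec{W}^\top M$, and simultaneously derive the analogous identity for $\vec{U}$, but weighted by the factor $(1-b^2)$ so that the two cross terms coming from $\partial_x \vec{U}$ and $\partial_x \vec{W}$ have a chance to cancel (this is exactly why the natural energy is $(1-b^2)\|U^n\|^2 + \|W^n\|^2$ rather than the plain sum). Because the last stage coincides with the update under ISA (i.e.\ $a_{si}=b_i$, which makes $U^{n+1}=U^{(s)}$ and $W^{n+1}=W^{(s)}$ after projecting onto $\mathbb{P}_N$), I would organize the computation so that the left-hand sides produce $(W^{(s)})^2 = (W^{n+1})^2$ and the corresponding $U$ term, turning the stage identity directly into a one-step energy inequality.

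The key algebraic step is to show that the two underlined quadratic forms in \eqref{vec_energy1} are the nonnegative dissipative terms we want. By hypothesis {\bf (M1)}, $\frac{\Delta t}{\eps}\vec{W}^\top M A\vec{W}\ge 0$ since $MA+(MA)^\top$ is semi-positive-definite; this gives a term that is $O(1/\eps)$ and hence grows as $\eps\to 0$, which is precisely the mechanism that controls the stiff relaxation. By {\bf (M2)} together with Lemma \ref{lem_v1}, the term $\vec{W}^\top M_*\vec{W}$ is bounded below by $c\sum_{i<j}|W^{(i)}-W^{(j)}|^2$, a multiple of the squared stage differences; this quantity is what I would use to absorb the error made by replacing stage values with $W^n$ inside the explicit convection terms. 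After these identities are in place, the remaining contributions are the non-dissipative transport terms of the form $-\Delta t\,\vec{W}^\top M\tilde A(\cdots \partial_x\vec{U}\cdots)$ and their $\vec{U}$-counterparts; these involve $\partial_x$ acting on stage values in $\mathbb{P}_N$.

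To close the estimate I would invoke the spectral CFL inequality \eqref{CFL1}, $\Delta t\|\partial_x\phi\|^2\le c_{CFL}\|\phi\|^2$, to convert each $\Delta t\,\partial_x$ factor into a bounded multiplier at the cost of a constant. The strategy is then to bound every cross term by Cauchy--Schwarz and Young's inequality, splitting it so that part is absorbed into the two nonnegative dissipation terms (the $O(1/\eps)$ term and the stage-difference term $c\sum_{i<j}|W^{(i)}-W^{(j)}|^2$) and the remainder is controlled by a small multiple of $\|\vec U\|^2+\|\vec W\|^2$ plus the previous-step energy. Since each stage value equals $U^n$ (resp.\ $W^n$) plus a combination of $\Delta t\,\partial_x$ terms, the stage norms are themselves comparable to the step-$n$ energy up to the CFL constant, so after rearranging I expect an inequality of the form $\mathcal E^{n+1}\le (1+C\Delta t)\mathcal E^{n}$, where $\mathcal E^{n}:=(1-b^2)\|U^n\|^2+\|W^n\|^2$. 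A discrete Gr\"onwall argument over $n\le T/\Delta t$ steps then yields the factor $e^{CT}=C$ and the stated bound.

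The main obstacle I anticipate is the bookkeeping that makes the two dissipation terms genuinely sufficient to absorb the transport cross terms uniformly in $\eps$. The delicate point is the interaction between the $U$ and $W$ blocks: the cross terms mixing $\partial_x\vec U$ against $\vec W$ must be handled so that the $(1-b^2)$ weighting produces a cancellation (or at least a controllable remainder) rather than a spurious $1/\eps$ or $1/\Delta t$ blow-up, and one must verify that the stage differences $W^{(i)}-W^{(j)}$ and the factor $\Delta t/\eps$ together dominate exactly the pieces that cannot be absorbed by the CFL bound alone. Making the constants in Young's inequality small enough, yet keeping the leftover $\eps$-dependent terms nonnegative, is where the specific structure encoded in {\bf (M1)}--{\bf (M2)} (and the rank-$(s-1)$ conditions, via Lemma \ref{lem_P}) must be used carefully, and I expect the choice of how to split each cross term to be the crux of the argument.
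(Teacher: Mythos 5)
Your overall architecture (multiplier matrix $M$, good terms from {\bf (M1)}--{\bf (M2)} via Lemmas \ref{lem_P} and \ref{lem_v1}, CFL bound, Gronwall) matches the paper's, but there are two genuine gaps. First, your claim that ISA (i.e.\ $a_{si}=b_i$) makes $U^{n+1}=U^{(s)}$ and $W^{n+1}=W^{(s)}$ is false: that identification requires the stronger GSA property $\tilde{a}_{si}=\tilde{b}_i$ in addition. ISA only guarantees that the stiff $1/\varepsilon$ sums in the stage-$s$ equation and in the update coincide, so that $W^{n+1}-W^{(s)}=-\Delta t\sum_{j}(\tilde{b}_j-\tilde{a}_{sj})\big((1-b^2)\partial_x U^{(j)}-b\partial_x W^{(j)}\big)$ is free of stiff terms but generally nonzero. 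For a scheme that is ISA but not GSA --- e.g.\ BHR(5,5,3)*, one of the main examples the theorem is meant to cover --- one must carry out a second energy estimate for this stage-to-update correction (multiply by $2U^{n+1},2W^{n+1}$, integrate, and absorb the new cross terms into the same good terms, as the paper does in \eqref{energyn0}--\eqref{secondlast}). Your proof omits this step entirely, so as written it only covers GSA schemes such as ARS(2,2,2) and ARS(4,4,3).

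Second, the mechanism for the transport cross terms --- which you yourself flag as ``the crux'' --- is left unresolved, and plain Cauchy--Schwarz/Young plus the CFL bound does not suffice. A term like $\Delta t\int U^{n}\partial_x U^{(j)}\rd{x}$ estimated that way gives only $C\sqrt{\Delta t}\,(\|U^{n}\|^2+\|U^{(j)}\|^2)$, since $\Delta t\|\partial_x U^{(j)}\|\le \sqrt{c_{CFL}\Delta t}\,\|U^{(j)}\|$; the per-step factor then becomes $1+C\sqrt{\Delta t}$ and Gronwall yields $\exp(CT/\sqrt{\Delta t})$, which blows up as $\Delta t\to 0$. The missing ingredient is the antisymmetry of $\partial_x$ under periodic boundary conditions: $\int U^{(j)}\partial_x U^{(j)}\rd{x}=0$ and $\int \partial_x (U^{(j)}W^{(j)})\rd{x}=0$, which allow one to rewrite $\int U^{(i)}\partial_x U^{(j)}\rd{x}=\int (U^{(i)}-U^{(j)})\partial_x U^{(j)}\rd{x}$ (and analogously for the mixed $U$--$W$ terms, as in \eqref{cross}) \emph{before} applying Young; the stage difference is then absorbed by the {\bf (M2)} good terms and the remainder carries a full factor of $\Delta t$. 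Relatedly, you should not plan to absorb anything into the stiff term $\frac{\Delta t}{\varepsilon}\int \vec{W}^\top MA\vec{W}\rd{x}$: it degenerates when $\varepsilon$ is not small, so a bound uniform in $\varepsilon$ can only discard it as a nonnegative quantity; it plays a role only in the later accuracy proofs, not in stability.
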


The IMEX-RK schemes ARS(2,2,2), ARS(4,4,3), and BHR(5,5,3)* given in the Appendix all satisfy the assumptions in Theorem~\ref{thm_stab}, hence they are uniformly stable.

\begin{proof}
We start from \eqref{vec_energy1}. Combined with a similar equation for $U$ and integrated in $x$, we obtain
\begin{equation}\label{vec_energy2}\begin{split}
& (1-b^2)\|U^{(s)}\|^2+\|W^{(s)}\|^2 \\
= & (1-b^2)\|U^{n}\|^2+\|W^{n}\|^2 -  (1-b^2)\int\vec{U}^\top M_* \vec{U}\rd{x} - \int\vec{W}^\top M_* \vec{W}\rd{x} - \frac{\Delta t}{\varepsilon} \int \vec{W}^\top M A \vec{W}\rd{x}\\
& -\Delta t b(1-b^2)\int \vec{U}^\top M \tilde{A} \partial_x \vec{U}\rd{x} +\Delta t b\int \vec{W}^\top M \tilde{A} \partial_x \vec{W}\rd{x} \\
& - \Delta t (1-b^2)\sum_{i,j=1}^s (M \tilde{A})_{i j} \int (U^{(i)}\partial_x W^{(j)}+W^{(i)}\partial_x U^{(j)} )\rd{x}.
\end{split}\end{equation}
Using {\bf (M1)} and Lemma~\ref{lem_v1} we deduce that
\begin{equation}\label{vec_energy3}\begin{split}
 (1-b^2)\|U^{(s)}\|^2+\|W^{(s)}\|^2 
\le & (1-b^2)\|U^{n}\|^2+\|W^{n}\|^2 -  (1-b^2)c \|\delta \vec{U}\|^2 - c  \|\delta \vec{W}\|^2 \\
& -\underline{\Delta t b(1-b^2)\int \vec{U}^\top M \tilde{A} \partial_x \vec{U}\rd{x}} +\underline{\Delta t b\int \vec{W}^\top M \tilde{A} \partial_x \vec{W}\rd{x}} \\
& - \Delta t (1-b^2)\sum_{i,j=1}^s (M \tilde{A})_{i j} \int (U^{(i)}\partial_x W^{(j)}+W^{(i)}\partial_x U^{(j)})\rd{x},
\end{split}\end{equation}
where we denote
\begin{equation}
\|\delta \vec{U}\|^2 := \sum_{1\le i < j \le s} \|U^{(i)}-U^{(j)}\|^2,
\end{equation}
and similarly for $W$.

In the two underlined terms, each element is of the form $C\Delta t \int U^{(i)}\partial_x U^{(j)}\rd{x}$ or its counterpart in $W$. It can be estimated by 
\begin{equation}\label{Uij_est}\begin{split}
\left|\Delta t\int U^{(i)}\partial_x U^{(j)}\rd{x}\right| = & \left|\Delta t\int (U^{(i)}-U^{(j)})\partial_x U^{(j)}\rd{x}\right| \le c\|U^{(i)}-U^{(j)}\|^2 + C\Delta t^2\|\partial_x U^{(j)}\|^2 \\
 \le & c\|U^{(i)}-U^{(j)}\|^2 + C\Delta t\|U^{(j)}\|^2,
\end{split}\end{equation}
where the new term added in the first equality is zero due to integration by parts and periodic boundary condition; the first inequality is by Young's inequality; and the last inequality uses the property $\Delta t\|\partial_x U^{(j)}\|^2 \le c_{CFL} \|U^{(j)}\|^2$ by \eqref{CFL1}. Here $c\|U^{(i)}-U^{(j)}\|^2$, with the pre-factor $c$ chosen sufficiently small, can be absorbed by the good term $(1-b^2)c \|\delta \vec{U}\|^2$. The term  $C\Delta t\|U^{(j)}\|^2$ can be estimated by 
\begin{equation}\label{Uj_est}
C\Delta t\|U^{(j)}\|^2 \le 2C\Delta t(\|U^{(1)}\|^2 + \|U^{(j)}-U^{(1)}\|^2) =  2C\Delta t(\|U^{n}\|^2 + \|U^{(j)}-U^{(1)}\|^2),
\end{equation}
and $2C\Delta t\|U^{(j)}-U^{(1)}\|^2$ can be absorbed by the good term $(1-b^2)c \|\delta \vec{U}\|^2$ provided $\Delta t\le c$ as assumed. 

The last term in \eqref{vec_energy3} can be treated similarly as \eqref{Uij_est} using the identity
\begin{equation}\label{cross}\begin{split}
\int (U^{(i)}\partial_x W^{(j)}+W^{(i)}\partial_x U^{(j)})\rd{x} = \int (U^{(i)}-U^{(j)})\partial_x W^{(j)}\rd{x} - \int \partial_x U^{(j)}( W^{(j)}- W^{(i)})\rd{x},
\end{split}\end{equation}
where integration by parts and periodic boundary is used.

Therefore we conclude with
\begin{equation}\begin{split}
& (1-b^2)\|U^{(s)}\|^2+\|W^{(s)}\|^2 
\le  (1+C\Delta t)((1-b^2)\|U^{n}\|^2+\|W^{n}\|^2).
\end{split}\end{equation}
If the IMEX-RK scheme is GSA (such as ARS(2,2,2) and ARS(4,4,3)), then $U^{(s)}=U^{n+1}$ and $W^{(s)}=W^{n+1}$ and we can jump directly to step (\ref{secondlast}) and finish the proof. For a general ISA scheme (such as BHR(5,5,3)*) we still need to estimate the change from stage $s$ to step $n+1$.

To obtain the same estimate for $U^{n+1},W^{n+1}$, we write the last two equations of \eqref{sch} as
\begin{equation}\label{energyn0}\begin{split}
& U^{n+1} = U^{(s)} -\Delta t \sum_{j=1}^{s} (\tilde{b}_j-\tilde{a}_{s j}) (b\partial_x U^{(j)} + \partial_x W^{(j)}), \\
& W^{n+1} = W^{(s)} -\Delta t \sum_{j=1}^{s}(\tilde{b}_j-\tilde{a}_{s j}) ((1-b^2)\partial_x U^{(j)} - b\partial_x W^{(j)}),
\end{split}\end{equation}
where the ISA property guarantees that no stiff terms appear here. Multiplying (\ref{energyn0}) by $2U^{n+1},2W^{n+1}$ respectively and integrating in $x$ gives the energy estimate
\begin{equation}\label{energyn1}\begin{split}
& (1-b^2)\|U^{n+1}\|^2+\|W^{n+1}\|^2 \\
= & (1-b^2)\|U^{(s)}\|^2+\|W^{(s)}\|^2 -  (1-b^2)\|U^{n+1}-U^{(s)}\|^2 -   \|W^{n+1}-W^{(s)}\|^2  \\
& -2\Delta t b(1-b^2)\sum_{j=1}^{s} (\tilde{b}_j-\tilde{a}_{s j}) \int U^{n+1}\partial_x U^{(j)}\rd{x}  +2\Delta t b\sum_{j=1}^{s} (\tilde{b}_j-\tilde{a}_{s j}) \int W^{n+1}\partial_x W^{(j)}\rd{x} \\
& - 2\Delta t (1-b^2)\sum_{j=1}^{s} (\tilde{b}_j-\tilde{a}_{s j}) \int (U^{n+1}\partial_x W^{(j)} +W^{n+1}\partial_x U^{(j)} )\rd{x}.
\end{split}\end{equation}
Adding with \eqref{vec_energy3} and treating the bad terms in \eqref{vec_energy3} as before, we obtain
\begin{equation}\begin{split}
& (1-b^2)\|U^{n+1}\|^2+\|W^{n+1}\|^2 \\
\le & (1+C\Delta t)((1-b^2)\|U^{n}\|^2+\|W^{n}\|^2) -  c \|\delta \vec{U}\|^2 - c  \|\delta \vec{W}\|^2 -   (1-b^2)\|U^{n+1}-U^{(s)}\|^2 -   \|W^{n+1}-W^{(s)}\|^2  \\
& -2\Delta t b(1-b^2)\sum_{j=1}^{s} (\tilde{b}_j-\tilde{a}_{s j}) \int U^{n+1}\partial_x U^{(j)}\rd{x}  +2\Delta t b\sum_{j=1}^{s} (\tilde{b}_j-\tilde{a}_{s j}) \int W^{n+1}\partial_x W^{(j)}\rd{x} \\
& - 2\Delta t (1-b^2)\sum_{j=1}^{s} (\tilde{b}_j-\tilde{a}_{s j}) \int (U^{n+1}\partial_x W^{(j)} +W^{n+1}\partial_x U^{(j)} )\rd{x}.
\end{split}\end{equation}
Then, similarly to \eqref{Uij_est}, we have 
\begin{equation}
\Delta t\left|\int U^{n+1}\partial_x U^{(j)}\rd{x}\right| \le c\|U^{n+1}-U^{(j)}\|^2 + C\Delta t\|U^{(j)}\|^2.
\end{equation}
Notice that $\|U^{n+1}-U^{(j)}\|^2\le 2(\|U^{n+1}-U^{(s)}\|^2 + \|U^{(s)}-U^{(j)}\|^2) \le 2(\|U^{n+1}-U^{(s)}\|^2 + \|\delta \vec{U}\|^2)$ can be controlled by the good terms $c \|\delta \vec{U}\|^2$ and $ (1-b^2) \|U^{n+1}-U^{(s)}\|^2$; and $C\Delta t\|U^{(j)}\|^2$ can be controlled by \eqref{Uj_est}. Treating the bad term $\int W^{n+1}\partial_x W^{(j)}\rd{x}$ similarly and the cross terms $\int (U^{n+1}\partial_x W^{(j)} +W^{n+1}\partial_x U^{(j)} )\rd{x}$ similarly as \eqref{cross}, we obtain
\begin{equation} \label{secondlast}
(1-b^2)\|U^{n+1}\|^2+\|W^{n+1}\|^2 
\le (1+C\Delta t)((1-b^2)\|U^{n}\|^2+\|W^{n}\|^2).
\end{equation}

The Gronwall's inequality implies (and adding back $N$)
\begin{equation}
(1-b^2)\|U_N^{n}\|^2+\|W_N^{n}\|^2 \leq \exp(CT)((1-b^2)\|U_N^{0}\|^2+\|W_N^{0}\|^2).
\end{equation}
Finally we have $\|U_N^0\|\le \|u_{in}\|$, $\|W_N^0\|\le \|w_{in}\|$ by \eqref{UN0}, hence the conclusion follows. 
\end{proof}

\section{Second order uniform accuracy}
\label{sec_second}

For the matrix $A$ in a type CK IMEX-RK scheme, we assume
\begin{itemize}
\item[{\bf (A)}] The last component of $\vec{v}$ is zero, where $\vec{v}$ is a generator of the one-dimensional null space of $A$.
\end{itemize}

We then have the following lemma:
\begin{lemma}\label{lem_v}
Assume {\bf (M1)} and {\bf (A)}, then there exists a constant $c>0$ such that
\begin{equation}
\vec{\xi}^\top MA \vec{\xi} \ge c|\xi_s|^2,
\end{equation}
for any vector $\vec{\xi}\in\mathbb{R}^s$.
\end{lemma}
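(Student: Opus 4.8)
The plan is to reduce the claim to a statement about the symmetric part of $MA$ and then exploit the structure of its null space together with assumption {\bf (A)}. First I would observe that since $\vec{\xi}^\top MA \vec{\xi} = \frac{1}{2}\vec{\xi}^\top (MA+(MA)^\top)\vec{\xi}$, it suffices to work with the symmetric matrix $P := MA + (MA)^\top$. By {\bf (M1)}, $P$ is semi-positive-definite with rank $s-1$, so its null space is one-dimensional and Lemma \ref{lem_P} applies: the quadratic form $\vec{x}^\top P \vec{x}$ is positive-definite when restricted to the orthogonal complement of any generator of $\ker P$.

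The crucial step is to identify this null vector explicitly with the null vector $\vec{v}$ of $A$ supplied by {\bf (A)}. Since $A\vec{v}=\vec{0}$, we have $MA\vec{v}=\vec{0}$, and therefore
\begin{equation}
\vec{v}^\top P \vec{v} = 2\,\vec{v}^\top MA \vec{v} = 0.
\end{equation}
Applying Lemma \ref{lem_P} with this $\vec{v}$ then gives $P\vec{v}=\vec{0}$, so $\vec{v}$ generates the one-dimensional null space of $P$, and $\vec{x}^\top P \vec{x}$ is positive-definite on $\vec{v}^\perp$. In particular there is a constant $c'>0$ (the smallest positive eigenvalue of $P$) with $\vec{\eta}^\top P \vec{\eta} \ge c'|\vec{\eta}|^2$ for all $\vec{\eta}\perp\vec{v}$.

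To conclude, I would decompose $\vec{\xi}=\alpha\vec{v}+\vec{\eta}$ with $\vec{\eta}\perp\vec{v}$, where I normalize $\vec{v}$. Here is where assumption {\bf (A)} enters decisively: since the last component of $\vec{v}$ vanishes, the last component of $\vec{\xi}$ equals that of $\vec{\eta}$, i.e.\ $\xi_s=\eta_s$. Then, using that $\vec{v}\in\ker P$,
\begin{equation}
\vec{\xi}^\top MA \vec{\xi} = \tfrac{1}{2}\vec{\xi}^\top P \vec{\xi} = \tfrac{1}{2}\vec{\eta}^\top P \vec{\eta} \ge \tfrac{c'}{2}|\vec{\eta}|^2 \ge \tfrac{c'}{2}|\eta_s|^2 = \tfrac{c'}{2}|\xi_s|^2,
\end{equation}
which yields the claim with $c=c'/2$. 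The only genuinely nontrivial point is the identification in the second paragraph that $\ker A$ and $\ker(MA+(MA)^\top)$ coincide (so that $\vec{v}$ plays the role of the distinguished null vector); once this is established, the positivity on the complement from Lemma \ref{lem_P} combined with the coordinate condition $v_s=0$ from {\bf (A)} makes the final estimate immediate. I expect no serious obstacle beyond verifying that $\vec{v}$ is indeed a generator of $\ker P$ rather than merely an element of it, which follows from the rank-$(s-1)$ hypothesis in {\bf (M1)}.
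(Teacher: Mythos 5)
Your proof is correct and follows essentially the same route as the paper's: both observe $\vec{v}^\top (MA+(MA)^\top)\vec{v}=0$, invoke Lemma \ref{lem_P} with $P=MA+(MA)^\top$ under {\bf (M1)} to get positive-definiteness on $\vec{v}^\perp$, and then use the condition $v_s=0$ from {\bf (A)} to extract the $|\xi_s|^2$ lower bound. The only difference is cosmetic: you spell out the orthogonal decomposition $\vec{\xi}=\alpha\vec{v}+\vec{\eta}$ explicitly, whereas the paper compresses this into ``an argument similar to the proof of Lemma \ref{lem_v1}.''
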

\begin{proof}
Notice that $\vec{v}^\top (MA+(MA)^\top) \vec{v}=0$ by the definition of $\vec{v}$. This, together with {\bf (M1)}, allows us to apply Lemma \ref{lem_P} with $P=MA+(MA)^\top$. This gives that the matrix $MA+(MA)^\top$ is strictly positive-definite in the orthogonal complement of $\vec{v}$. {\bf (A)} implies that $(0,\dots,0,1)^\top$ is in the orthogonal complement of $\vec{v}$, and the conclusion follows by an argument similar to the proof of Lemma \ref{lem_v1}. 
\end{proof}

\begin{remark} \label{eAa}
Note that the first component of $\vec{v}$ must be nonzero and we fix it as $1$ for uniqueness. Then for type ARS schemes it is easy to see $\vec{v}=(1,0,\dots,0)^\top$ so condition {\bf (A)} is automatically satisfied. For general type CK schemes, condition {\bf (A)} is equivalent to $\vec{e}_s^\top\hat{A}^{-1}\vec{a}=0$, where $\vec{e}_s=(0,\dots,0,1)^\top\in \mathbb{R}^{s-1}$. This condition was actually required in \cite{BR09} (eqn (10)) in the construction of the scheme. So BHR(5,5,3)* satisfies {\bf (A)}.
\end{remark}

We denote the numerical error at the $n$-th time step as
\begin{equation}
U_e^n = U^n_N - u(t_n),\quad W_e^n = W^n_N - w(t_n),
\end{equation}
where $U^n_N$, $W^n_N$ are the numerical solution obtained by (\ref{sch_full})-(\ref{UN0}), and $u(t_n)$ and $w(t_n)$ are the exact solution to (\ref{eq1}) at $t_n$. We say the initial data is \emph{consistent up to order} $q$ if $\|u_{in}\|_{H^q}^2+\|w_{in}\|_{H^q}^2\le C$ and the scheme we are considering is applied after an initial layer of length $T_0\ge q \varepsilon \log(1/\varepsilon)$.

Our main result in this section is stated as follows.
\begin{theorem}[Second order uniform accuracy of IMEX-RK schemes]\label{thm_2nd}
Under the same assumptions as in Theorem \ref{thm_stab}, further assume 
\begin{itemize}
\item The IMEX-RK scheme satisfies the standard (up to) second order conditions (eqns (6)-(8) in \cite{PR05}).
\item $c_i=\tilde{c}_i,\,i=1,\dots,s$.
\item The condition {\bf (A)}.
\item The initial data is consistent up to order $6$.
\end{itemize}
Then for any $T>0$ and $n\in\mathbb{Z}_{\ge 0}$ with $n\Delta t\le T$, we have
\begin{equation} 
(1-b^2)\|U^n_e\|^2+\|W^n_e\|^2 \le  C\Big(\Delta t^4 + \frac{1}{N^8}\Big),
\end{equation}
with $C$ independent of $\varepsilon$, $N$ and $\Delta t$.
\end{theorem}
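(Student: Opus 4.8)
The plan is to prove the estimate by a consistency-plus-stability argument run entirely through the energy machinery of Section~\ref{sec:uniform_stability}, the essential new work being an $\eps$-uniform control of the truncation errors. The decisive structural input is Lemma~\ref{lem_v}: under condition \textbf{(A)} the stiff term no longer merely furnishes a nonnegative quantity (as in the stability proof) but a genuinely coercive one, $\frac{\Delta t}{\eps}\int \vec{W}_e^\top MA\,\vec{W}_e\rd{x}\ge c\frac{\Delta t}{\eps}\|W_e^{(s)}\|^2$, giving strong control on the last-stage error exactly where \eqref{thm_reg_2} says $w$ is smallest.

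First I would derive the error equations. Sampling the exact solution of \eqref{eq1} at the stage times $t_n+c_i\Delta t$ (here the hypothesis $c_i=\tilde c_i$ is used so that the explicit and implicit abscissae coincide) and inserting it into \eqref{sch_full}, I define the stage residuals $\vec\eta_U,\vec\eta_W$ and the step residuals $\eta_U^{n+1},\eta_W^{n+1}$, which also absorb the Fourier projection error. By linearity of \eqref{eq1}, the errors $U_e,W_e$ then satisfy the very same type-CK/ISA scheme as \eqref{sch_vec} and \eqref{energyn0}, now forced by $\Delta t\,\vec\eta_U$, $\Delta t\,\vec\eta_W$ and the step residuals. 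The crux of the consistency analysis is the stiff part of $\vec\eta_W$, which carries a prefactor $1/\eps$: I would split $\vec\eta_W=\vec\eta_W^{\mathrm{ns}}+\frac1\eps\vec\rho$ and use \eqref{thm_reg_2}, namely that $w$ and its space–time derivatives are $O(\eps)$, to show $\vec\rho=O(\eps\,\Delta t^2)$ so that the dangerous term $\frac1\eps\vec\rho$ is in fact $O(\Delta t^2)$ uniformly in $\eps$. The convection-type residuals are handled by Taylor expansion together with the second-order conditions of \cite{PR05} and the regularity of Lemma~\ref{lem_reg}, with consistency up to order $6$ providing the space–time derivative bounds needed for both the temporal order and the spatial ($N^{-8}$) contribution via Lemma~\ref{Fourier1}.

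I would then run the energy estimate for the error verbatim as in the proof of Theorem~\ref{thm_stab}, collecting the good terms $c\|\delta\vec U_e\|^2$, $c\|\delta\vec W_e\|^2$ and now also the last-stage dissipation $c\frac{\Delta t}{\eps}\|W_e^{(s)}\|^2$ from Lemma~\ref{lem_v}. The genuinely new feature is the source pairing $\Delta t\int \vec W_e^\top M\vec\eta_W\rd{x}$ and its $U$-analogue. To absorb these I would decompose $\vec W_e=W_e^{(s)}\vec e+(\vec W_e-W_e^{(s)}\vec e)$: the $\vec e$-component pairs against the stiff dissipation on $W_e^{(s)}$, while the remainder is a combination of stage differences absorbed by $c\|\delta\vec W_e\|^2$; the leftover quadratic-in-$\vec\eta$ pieces, after invoking the order conditions, contribute at the level $C\Delta t(\Delta t^4+N^{-8})$ per step. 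The ISA structure (and GSA when available) is used exactly as in \eqref{energyn0} to pass from stage $s$ to step $n+1$ without reviving any stiff terms. The target of this step is the per-step inequality
\[
(1-b^2)\|U_e^{n+1}\|^2+\|W_e^{n+1}\|^2\le(1+C\Delta t)\big[(1-b^2)\|U_e^{n}\|^2+\|W_e^{n}\|^2\big]+C\Delta t\big(\Delta t^4+N^{-8}\big),
\]
after which discrete Gronwall (noting $U_e^0,W_e^0$ are pure projection errors) yields the stated bound.

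The hard part, I expect, will be the interaction of the stiff residual with the low-order internal stages: individual stage residuals are only of stage order, so a crude Young inequality against $\|W_e\|^2$ would lose powers of $\Delta t$ and produce precisely the order reduction seen for schemes like ARS(4,4,3). The proof must instead route every residual through the structural good terms — the last-stage dissipation $\frac{\Delta t}{\eps}\|W_e^{(s)}\|^2$ from condition \textbf{(A)}/Lemma~\ref{lem_v}, the stage differences $c\|\delta\vec W_e\|^2$, the matched nodes $c_i=\tilde c_i$, and the $O(\eps)$ smallness of $w$ — and show that, stage by stage, these cooperate so that the accumulated error remains second order uniformly in $\eps$. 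Getting this bookkeeping to close is the essential obstacle.
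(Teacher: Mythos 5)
Your overall skeleton (uniform truncation-error bounds via the $O(\eps)$ smallness of $w$ from \eqref{thm_reg_2}, then an error energy estimate run through the machinery of Theorem~\ref{thm_stab}, with the coercivity of Lemma~\ref{lem_v} under \textbf{(A)} as the key structural input) matches the paper, and your truncation-error analysis is essentially Lemma~\ref{lem_LTE2}. But the core of your energy step does not close at second order, and the place where it fails is exactly the place you flagged as "the essential obstacle." In the error equation, before any change of variables, the stage residuals enter \emph{without} a factor of $\Delta t$: $\vec{W}_e = W_e^{(1)}\vec{e} - \Delta t\,\tilde{A}(\cdots) - \mu A\vec{W}_e + \vec{E}_W$, with $\vec{E}_W = O(\Delta t^2)$ componentwise (your $\Delta t\,\vec{\eta}_W$). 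Your proposed decomposition $\vec{W}_e = W_e^{(s)}\vec{e} + (\vec{W}_e - W_e^{(s)}\vec{e})$ then gives two pieces. The deviation piece must be absorbed by $-c\|\delta\vec{W}_e\|^2$, which carries no $\Delta t$, so Young's inequality forces a remainder $C\|\vec{E}_W\|^2 = C\Delta t^4$ \emph{per step}; summed over $O(1/\Delta t)$ steps this is $O(\Delta t^3)$ for the squared error, i.e.\ order $3/2$, not $2$. The $\vec{e}$-piece (which, by Proposition~\ref{prop_posdef} and $E_W^{(1)}=0$, reduces to $2\int W_e^{(s)}E_W^{(s)}\rd{x}$) can be paired against the stiff dissipation $c\mu\|W_e^{(s)}\|^2$, but Young then leaves $\frac{C}{\mu}\|E_W^{(s)}\|^2 = C\eps\Delta t^3$ per step, i.e.\ $C\eps\Delta t^2$ total, which is $O(\Delta t^4)$ only when $\eps\lesssim\Delta t^2$. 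In the intermediate regime $\Delta t^2\ll\eps\ll 1$ neither route (nor any reweighting of Young's inequality against the available good terms) produces the needed per-step source of size $O(\Delta t^5)$; your claimed per-step bound $C\Delta t(\Delta t^4+N^{-8})$ is therefore not supported by the estimates you describe, and no cancellation from the order conditions can rescue it, since the stage residuals are irreducibly $O(\Delta t^2)$.

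The missing idea in the paper is a change of the error variables that manufactures the extra factor of $\Delta t$: one sets $\vec{U}_{e*} = \vec{U}_e - \vec{E}_U$ and $(I+\mu A)\vec{W}_{e*} = (I+\mu A)\vec{W}_e - \vec{E}_W$, after which the scheme \eqref{sch_vec_st} for $(\vec{U}_{e*},\vec{W}_{e*})$ carries the residuals only in the form $\Delta t\,\tilde{A}\vec{F}_U$, $\Delta t\,\tilde{A}\vec{F}_W$, so every stage-residual contribution becomes $O(\Delta t^5)$ per step. The price is a single structured bad term $\mu A_s(I+\mu A)^{-1}\vec{E}_W$ when recovering $W_e^{n+1}$ from $W_{e*}^{(s)}$ in \eqref{energy_e_n01}, and taming it requires machinery absent from your proposal: Lemma~\ref{lem_mu}, giving $\mu A(I+\mu A)^{-1}=O\big(\frac{\mu}{1+\mu}\big)$ uniformly in $\mu$; the good term $-c\frac{\mu}{1+\mu}\|W_e^{n+1}\|^2$ gained by combining the coercivity $-c\mu\|W_{e*}^{(s)}\|^2$ with the ISA recovery term $-\|W_e^{n+1}-W_{e*}^{(s)}\|^2$; and a bootstrap in which one \emph{first} proves the intermediate estimate \eqref{final_est1} (uniform first order, and second order for $\eps=O(1)$), applies it to $x$-derivatives of the error (this is why consistency of order $6$ rather than $4$ is needed), and only then upgrades $W$ and finally $U$ to uniform second order. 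Without this auxiliary-variable transformation and the two-tier bootstrap, the energy argument you outline stalls at uniform order $3/2$.
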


The IMEX-RK schemes ARS(2,2,2), ARS(4,4,3), and BHR(5,5,3)* given in the Appendix all satisfy the assumptions in Theorem~\ref{thm_2nd}, hence they will exhibit at least second order uniform accuracy in time.

\begin{remark}
In the study of IMEX-BDF schemes in \cite{HS21}, the uniform accuracy is basically a direct consequence of the uniform stability because the order of the scheme is the same as its stage order. However, this is not the case for IMEX-RK schemes. In fact, if the scheme is second order, the intermediate stages may only have stage order 1 (c.f., Lemma \ref{lem_LTE2}). This makes the proof of uniform accuracy significantly harder than that of uniform stability.
\end{remark}

We will prove Theorem~\ref{thm_2nd} in the rest of this section. To begin with, notice that the error from the initial projection \eqref{UN0} is bounded by
\begin{equation}
(1-b^2)\|U^0_N-u_{in}\|^2+\|W^0_N-w_{in}\|^2 \le \frac{C}{N^8},
\end{equation}
by Lemma \ref{Fourier1} and the fact that $\|u_{in}\|_{H^4}^2+\|w_{in}\|_{H^4}^2\le C$ by Lemma \ref{lem_reg}. Therefore, in the rest of the proof we may ignore the $N$-dependence. We first analyze the order of the local truncation error in Section \ref{sec_second1}. Then, in Section \ref{sec_second2} we conduct energy estimates \eqref{sch_vec_st1} and \eqref{energy_e_n01} for the error $U^n_e,W^n_e$, analogous to \eqref{vec_energy3} and \eqref{energyn1}, with extra terms coming from the local truncation error. These energy estimates directly imply the first order uniform accuracy, as shown in Section \ref{sec_second3}. We finally improve to second order uniform accuracy in Section \ref{sec_second4} with the aid of assumption {\bf (A)}.

\subsection{Local truncation error}\label{sec_second1}

With the assumption $c_i=\tilde{c}_i,\,i=1,\dots,s$, $W^{(i)}$ is supposed to be an approximation of $w(t_n+c_i\Delta t)$ (and similarly for $U^{(i)}$). With this in mind, we replace $W^{(i)}$ in \eqref{sch} by $w(t_n+c_i\Delta t)$ and define the local truncation error $E_W^{(i)}$, $i=2,\dots,s$ and $E_W^{n+1}$ by
\begin{equation}\label{sch_exact}\begin{split}
& w(t_n+c_i\Delta t) = w(t_n) -\Delta t \sum_{j=1}^{i-1}\tilde{a}_{i j} ((1-b^2)\partial_x u(t_n+c_j\Delta t) - b\partial_x w(t_n+c_j\Delta t)) - \frac{\Delta t}{\varepsilon}\sum_{j=1}^i a_{i j}w(t_n+c_j\Delta t) - E_W^{(i)},\\
& w(t_n+\Delta t) = w(t_n) -\Delta t \sum_{j=1}^{s}\tilde{b}_{j} ((1-b^2)\partial_x u(t_n+c_j\Delta t) - b\partial_x w(t_n+c_j\Delta t)) - \frac{\Delta t}{\varepsilon}\sum_{j=1}^s b_jw(t_n+c_j\Delta t) - E_W^{n+1}.
\end{split}\end{equation}
We also write $E_W^{(1)} = 0$.

Then we have the estimates for the local truncation error uniform in $\varepsilon$.
\begin{lemma}\label{lem_LTE2}
For a second order IMEX-RK scheme of type CK with $c_i=\tilde{c}_i,\,i=1,\dots,s$ and assume the initial data is consistent up to order $q\ge 3$. Then, in the $L^2$ norm,
\begin{equation}
E_W^{(i)} = O(\Delta t^2),\,i=2,\dots,s,\quad E_W^{n+1} = O(\Delta t^3),
\end{equation}
and similar results hold for similar quantities with $u$ or their $x$-derivatives up to order $q-3$.
\end{lemma}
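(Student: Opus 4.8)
The plan is to obtain the local truncation errors by Taylor-expanding every quantity appearing in the defining equations \eqref{sch_exact} about the base time $t_n$, and then to exploit the order conditions of the scheme to cancel the leading-order terms. The crucial subtlety — and the reason the two estimates differ by a full power of $\Delta t$ — is that the relaxation term carries the dangerous factor $1/\varepsilon$, so naive Taylor expansion of $w(t_n+c_j\Delta t)$ would produce terms like $\frac{\Delta t}{\varepsilon}\partial_t w$ that are \emph{not} uniformly small in $\varepsilon$. The way around this is to use the exact equation \eqref{eq1} itself, namely $-\frac{1}{\varepsilon}w = \partial_t w + \partial_x((1-b^2)u - bw)$, to rewrite the stiff sum $\frac{\Delta t}{\varepsilon}\sum_j a_{ij} w(t_n+c_j\Delta t)$ entirely in terms of $\partial_t w$ and spatial derivatives of $u,w$, which are all $O(1)$ (indeed $w$ and its derivatives are $O(\varepsilon)$ by \eqref{thm_reg_2}). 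Once this substitution is made, the whole right-hand side of \eqref{sch_exact} is expressed through the quantities controlled by Lemma \ref{lem_reg}, and the $\varepsilon$-dependence is tamed.

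First I would carry out the substitution above and collect the right-hand side of each line of \eqref{sch_exact} as a Taylor polynomial in $\Delta t$ with coefficients built from $\partial_t^{r_1}\partial_x^{r_2}u$ and $\partial_t^{r_1}\partial_x^{r_2}w$ evaluated at $t_n$. Matching against the left-hand side Taylor expansions $w(t_n+c_i\Delta t)=\sum_r \frac{(c_i\Delta t)^r}{r!}\partial_t^r w(t_n)$, the coefficient of each power $\Delta t^p$ becomes a linear combination of the Butcher coefficients that vanishes precisely when the corresponding order (and quadrature) condition holds. For the stage errors $E_W^{(i)}$, the stage order is only one — the conditions $\sum_j \tilde a_{ij}=\tilde c_i$ and $c_i=\tilde c_i$ kill the $O(\Delta t)$ coefficient, but the $O(\Delta t^2)$ coefficient survives because the second-order stage conditions (e.g. $\sum_j \tilde a_{ij}\tilde c_j = \tilde c_i^2/2$) are not imposed at the internal stages. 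Hence $E_W^{(i)}=O(\Delta t^2)$. For the step error $E_W^{n+1}$, the full second-order order conditions (eqns (6)--(8) in \cite{PR05}) eliminate both the $O(\Delta t)$ and $O(\Delta t^2)$ coefficients, leaving $E_W^{n+1}=O(\Delta t^3)$. The same bookkeeping applied to the $U$-equation, and to the $x$-differentiated equations, gives the analogous statements for $u$ and for derivatives up to order $q-3$; the loss of three derivatives reflects that the $O(\Delta t^3)$ remainder involves third-order time-and-space derivatives of the solution, which require $q\ge 3$ (more precisely $r_1+r_2\le q$) to be bounded via \eqref{thm_reg_1}.

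The main obstacle I expect is the uniformity in $\varepsilon$ of the remainder terms, not the order-condition algebra. Specifically, after the substitution the surviving coefficients involve mixed time derivatives $\partial_t^{r_1}\partial_x^{r_2}w$, and one must verify that every such term is bounded by $CE_0$ (or $CE_0\varepsilon^2$) uniformly, using exactly the regularity estimates \eqref{thm_reg_1}--\eqref{thm_reg_2}; the integral-form Taylor remainder must likewise be controlled by these bounds rather than by blowing-up $\varepsilon$-derivatives. One has to be careful that converting a time derivative of $w$ into $-\frac1\varepsilon w$ minus convection terms is only valid where the exact equation holds, which is why the consistency hypothesis requires the scheme to start after an initial layer of length $T_0\ge q\varepsilon\log(1/\varepsilon)$ so that \eqref{energy1}--\eqref{thm_reg_2} are in force at $t_n$. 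Keeping track of how many derivatives are consumed (three for the step error, fewer for lower-order derivative versions) is the bookkeeping that makes the "$q\ge 3$" and "up to order $q-3$" qualifiers precise.
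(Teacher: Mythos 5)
Your plan is correct and reaches the same conclusion by the same overall strategy as the paper: Taylor expansion of \eqref{sch_exact} about $t_n$, cancellation of the low-order coefficients via $c_i=\tilde c_i=\sum_j a_{ij}=\sum_j\tilde a_{ij}$ and the second order conditions, and Lemma \ref{lem_reg} for $\varepsilon$-uniform control of the remainders. The one genuine difference is how the stiff sum is handled. The paper expands $w(t_n+c_j\Delta t)$ inside $\frac{\Delta t}{\varepsilon}\sum_j a_{ij}w(t_n+c_j\Delta t)$ directly, so its remainders carry explicit $1/\varepsilon$ factors, e.g.\ $O\bigl(\frac{\Delta t^2}{\varepsilon}\partial_t w\bigr)$ and $O\bigl(\frac{\Delta t^3}{\varepsilon}\partial_{tt} w\bigr)$, and these are tamed by the bounds $\|\partial_t w\|+\|\partial_{tt}w\|\le C\varepsilon$ from \eqref{thm_reg_2}; the equation \eqref{eq1} is then invoked only to cancel the $O(\Delta t)$ (resp.\ $O(\Delta t^2)$) coefficients. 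You instead substitute $-\frac1\varepsilon w=\partial_t w+\partial_x((1-b^2)u-bw)$ \emph{before} expanding, which removes the $1/\varepsilon$ factor entirely; after that, all remainders involve only mixed derivatives bounded by the $O(1)$ estimates \eqref{thm_reg_1}, so the $O(\varepsilon)$ bounds of \eqref{thm_reg_2} are never needed. The two computations are algebraically equivalent (the PDE is used either way, just at a different stage), but your ordering makes the $\varepsilon$-uniformity automatic rather than resting on the sharper decay of time derivatives of $w$; the paper's ordering keeps the expansion closer to the literal form of \eqref{sch_exact}. Your bookkeeping of the consistency order ($q\ge 3$, derivatives up to $q-3$) matches the paper's.
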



\begin{proof}
Lemma \ref{lem_reg} shows that 
\begin{equation}\label{dtu}
\|\partial_t u \|_{H^2} + \|\partial_{tt} u \|_{H^1}+ \|\partial_{ttt} u \| + \|\partial_t w \|_{H^2} + \|\partial_{tt} w \|_{H^1}+ \|\partial_{ttt} w \|  \le C,\quad \|\partial_t w\| + \|\partial_{tt} w \| \le  C\varepsilon,
\end{equation}
if the initial data is consistent up to order 3.

Taylor expansion of the first equation in \eqref{sch_exact} gives
\begin{equation}\begin{split}
& w(t_n)+c_i\Delta t\partial_t w(t_n) + O(\Delta t^2\partial_{tt}w) \\
= & w(t_n) -\Delta t \sum_{j=1}^{i-1}\tilde{a}_{i j} ((1-b^2)\partial_x u(t_n) - b\partial_x w(t_n)) + O(\Delta t^2 \partial_t\partial_x u)  + O(\Delta t^2 \partial_t\partial_x w) \\
& - \frac{\Delta t}{\varepsilon}\sum_{j=1}^i a_{i j}w(t_n)  + O\left(\frac{\Delta t^2}{\varepsilon} \partial_t w\right) - E_W^{(i)}.
\end{split}\end{equation}
Since $(u,w)$ satisfies \eqref{eq1}, we see that the $O(\Delta t)$ terms are canceled due to $c_i = \sum_j a_{ij} = \sum_j \tilde{a}_{ij}$. Therefore we get $E_W^{(i)}=O(\Delta t^2)$ in view of \eqref{dtu}.

Taylor expansion of the second equation in \eqref{sch_exact} gives
\begin{equation}\begin{split}
& w(t_n) + \Delta t \partial_t w(t_n) + \frac{1}{2}\Delta t^2 \partial_{tt} w(t_n) + O(\Delta t^3\partial_{ttt}w) \\
= & w(t_n) -\Delta t \sum_{j=1}^{s}\tilde{b}_{j} \Big((1-b^2)(\partial_x u(t_n)+c_j\Delta t \partial_t\partial_x u(t_n)) - b(\partial_x w(t_n)+c_j\Delta t\partial_t\partial_x w(t_n))\Big) \\ & + O(\Delta t^3 \partial_{tt}\partial_x u)  + O(\Delta t^3 \partial_{tt}\partial_x w) \\
& - \frac{\Delta t}{\varepsilon}\sum_{j=1}^s b_j(w(t_n)+c_j\Delta t\partial_t w(t_n))  + O\left(\frac{\Delta t^3}{\varepsilon} \partial_{tt} w\right) - E_W^{n+1}.
\end{split}\end{equation}
Similar as before, we may use the assumed order conditions to show that all the $O(\Delta t)$ and $O(\Delta t^2)$ terms are canceled. Therefore we get $E_W^{n+1}=O(\Delta t^3)$ in view of \eqref{dtu}.

The results for $u$ and their $x$-derivatives can be obtained similarly.

\end{proof}

\subsection{Energy estimates for the error}\label{sec_second2}

We first state a technical lemma concerning the matrix $(I+\mu A)^{-1}$.

\begin{lemma}\label{lem_mu}
Let $\mu>0$, possibly large or small. Then componentwise we have
\begin{equation}\label{lem_mu_1}
(I+\mu A)^{-1} = O(1),
\end{equation}
\begin{equation}\label{lem_mu_2}
A(I+\mu A)^{-1} = O\left(\frac{1}{1+\mu}\right).
\end{equation}
Furthermore, if {\bf (A)} is assumed, then
\begin{equation}\label{lem_mu_3}
 \big(A (I+\mu A)^{-1}\big)_{s,i} =  O\left(\frac{1}{(1+\mu)^2}\right),\quad i=1,\dots,s-1.
\end{equation}
\end{lemma}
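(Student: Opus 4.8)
The plan is to reduce everything to the block lower-triangular structure of a type CK scheme, $A=\left(\begin{smallmatrix}0&0\\ \vec a&\hat A\end{smallmatrix}\right)$ (see \eqref{CK}), in which $\hat A\in\mathbb R^{(s-1)\times(s-1)}$ is lower-triangular and invertible, with positive diagonal entries, so that $I+\mu A$ and $I+\mu\hat A$ are invertible for every $\mu\ge0$. The engine of the whole proof is the auxiliary entrywise estimate
\begin{equation}\label{plan_aux}
(I+\mu\hat A)^{-1}=O\!\left(\frac{1}{1+\mu}\right).
\end{equation}
I would prove \eqref{plan_aux} from the rational-function viewpoint: by Cramer's rule each entry of $(I+\mu\hat A)^{-1}$ is a ratio of polynomials in $\mu$ whose denominator is $\det(I+\mu\hat A)=\prod_{i=2}^s(1+\mu a_{ii})$, of degree $s-1$ with no nonnegative root, while the numerators are cofactors of degree at most $s-2$; hence every entry is continuous on $[0,\infty)$ and decays like $1/\mu$ as $\mu\to\infty$, which is exactly \eqref{plan_aux}. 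It is essential here that $\hat A$ is invertible: this is precisely what forces the decay, and it is the feature distinguishing $\hat A$ from the singular matrix $A$.

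Given \eqref{plan_aux}, the first two estimates follow quickly. Writing $(I+\mu A)^{-1}$ in block form yields $(I+\mu A)^{-1}=\left(\begin{smallmatrix}1&0\\ -\mu(I+\mu\hat A)^{-1}\vec a&(I+\mu\hat A)^{-1}\end{smallmatrix}\right)$; the only nontrivial block to bound is the lower-left one, which is $\mu\cdot O(1/(1+\mu))=O(1)$, giving \eqref{lem_mu_1}. For \eqref{lem_mu_2} I would use the algebraic identity $\mu A(I+\mu A)^{-1}=I-(I+\mu A)^{-1}$, so that $A(I+\mu A)^{-1}=\frac1\mu\bigl(I-(I+\mu A)^{-1}\bigr)$; combining the $O(1)$ bound \eqref{lem_mu_1} for $\mu\ge1$ with the continuity (hence boundedness) of $A(I+\mu A)^{-1}$ on $\mu\in[0,1]$, where $1/(1+\mu)$ is bounded below, yields $O(1/(1+\mu))$.

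The delicate part is \eqref{lem_mu_3}, and it is the main obstacle, since the naive bound from \eqref{lem_mu_2} gives only $O(1/(1+\mu))$ and I must gain an extra power of decay. Extracting the last row of $A(I+\mu A)^{-1}$ from the block inverse and the identity above, its entries in columns $1,\dots,s-1$ are: in column $1$, the scalar $\vec f^\top(I+\mu\hat A)^{-1}\vec a$, and in columns $2,\dots,s-1$, the first $s-2$ entries of $\vec f^\top\hat A(I+\mu\hat A)^{-1}$, where $\vec f=(0,\dots,0,1)^\top\in\mathbb R^{s-1}$. Here assumption \textbf{(A)} enters as the cancellation $\vec f^\top\hat A^{-1}\vec a=0$ (the last entry of the null vector $\vec v$ vanishes, cf.\ Remark~\ref{eAa}). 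For column $1$, I would set $\vec p=\hat A^{-1}\vec a$, so that $\vec f^\top\vec p=0$, and use commutativity of $\hat A$ with $(I+\mu\hat A)^{-1}$ together with $\hat A(I+\mu\hat A)^{-1}=\frac1\mu\bigl(I-(I+\mu\hat A)^{-1}\bigr)$ to rewrite the scalar as $-\frac1\mu\vec f^\top(I+\mu\hat A)^{-1}\vec p$; by \eqref{plan_aux} this is $O(1/(1+\mu)^2)$ for large $\mu$, while the cancellation $\vec f^\top\vec p=0$ makes the bracket $O(\mu)$ near $\mu=0$, so the $1/\mu$ prefactor does not blow up. For columns $2,\dots,s-1$ I would instead write $\vec f^\top\hat A(I+\mu\hat A)^{-1}=\frac1\mu\bigl(\vec f^\top-\vec f^\top(I+\mu\hat A)^{-1}\bigr)$; since the first $s-2$ entries of $\vec f^\top$ vanish, those components reduce to $-\frac1\mu\bigl(\vec f^\top(I+\mu\hat A)^{-1}\bigr)_k$, again $O(1/(1+\mu)^2)$ by \eqref{plan_aux}, with boundedness near $\mu=0$ coming from the same zero pattern. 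Thus the extra decay in \eqref{lem_mu_3} comes from two cancellations—condition \textbf{(A)} for the first column and the vanishing pattern of $\vec f$ for the remaining ones—each removing the leading $O(1/(1+\mu))$ term; the one point requiring care is reconciling the large-$\mu$ decay with boundedness as $\mu\to0$, where the $1/\mu$ factors are controlled precisely by these cancellations.
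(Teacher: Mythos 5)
Your proof is correct, and it takes a genuinely different route from the paper's. The paper splits $I+\mu A$ into its diagonal part $D=I+\mu\,\mathrm{diag}\{0,a_{22},\dots,a_{ss}\}$ and the strictly lower-triangular remainder $L$, and uses the terminating Neumann series $(I+\mu A)^{-1}=\big(I+\sum_{k=1}^{s-1}(-1)^k(D^{-1}L)^k\big)D^{-1}$ (valid since $D^{-1}L$ is nilpotent); the entrywise bounds $D^{-1}L=O(\mu/(1+\mu))$ and $D^{-1}=O(1)$ then give \eqref{lem_mu_1}--\eqref{lem_mu_2} for all $\mu$ at once, with no separate small-$\mu$ step like your compactness patch on $\mu\in[0,1]$. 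You instead invert blockwise and prove the core decay $(I+\mu\hat A)^{-1}=O(1/(1+\mu))$ by Cramer's rule and degree counting, which is conceptually clean but requires the continuity argument near $\mu=0$. For \eqref{lem_mu_3} the mechanisms genuinely differ: for columns $i=2,\dots,s-1$ the paper gains the extra decay from the final right factor $D^{-1}$ (a factor $1/(1+\mu a_{ii})$ per column, exploiting $a_{ii}>0$), and then obtains the $i=1$ entry \emph{indirectly} from $A(I+\mu A)^{-1}\vec v=(I+\mu A)^{-1}A\vec v=0$ together with {\bf (A)} ($v_s=0$, $v_1\neq 0$), i.e., by solving the relation $\sum_{i}\big(A(I+\mu A)^{-1}\big)_{s,i}v_i=0$ for the first entry; you do the opposite, handling $i=1$ \emph{directly} via the cancellation $\vec e_s^\top\hat A^{-1}\vec a=0$ (your $\vec f^\top\hat A^{-1}\vec a=0$) and the commutation identity, and getting $i=2,\dots,s-1$ from the zero pattern of the last coordinate vector. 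Both arguments are sound; what yours buys is transparency (explicit block formulas, and it makes explicit the hypothesis $a_{ii}>0$ for $i\ge 2$, which the paper also uses but only silently when writing $\mu/(1+\mu a_{ii})=O(\mu/(1+\mu))$), while the paper's buys a single algebraic formula with explicit uniform constants covering small and large $\mu$ simultaneously.
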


\begin{proof}

Denote
\begin{equation}
D = I + \mu \text{diag}\{0,a_{22},\dots,a_{ss}\},\quad L = I+\mu A - D,
\end{equation}
as the diagonal and lower triangular parts of $I+\mu A$. Then 
\begin{equation}\label{muA0}\begin{split}
(I+\mu A)^{-1} = & (D+L)^{-1} = (I+D^{-1}L )^{-1}D^{-1} 
=  \Big(I + \sum_{k=1}^{s-1} (-1)^k (D^{-1}L )^k\Big)D^{-1}.
\end{split}\end{equation}
Notice that
\begin{equation}
D^{-1}L = \begin{pmatrix}
0 & 0 & 0 & \cdots & 0 \\
\frac{\mu}{1+\mu a_{22}}a_{21} & 0 & 0 & \cdots & 0 \\
\frac{\mu}{1+\mu a_{33}}a_{31} & \frac{\mu}{1+\mu a_{33}}a_{32} & 0 & \cdots & 0 \\
\cdots & \cdots & \cdots & \cdots & \cdots \\
\frac{\mu}{1+\mu a_{ss}}a_{s1} & \frac{\mu}{1+\mu a_{ss}}a_{s2} & \frac{\mu}{1+\mu a_{ss}}a_{s3} & \cdots & 0 \\
\end{pmatrix} =O\left(\frac{\mu}{1+\mu}\right) \le O(1),
\end{equation}
and $D^{-1}=O(1)$. Therefore \eqref{lem_mu_1} follows. 

Then we write
\begin{equation}\label{muA}\begin{split}
\mu A(I+\mu A)^{-1} = & (I+\mu A - I)(I+\mu A)^{-1} = I - (I+\mu A)^{-1} \\
= & I - D^{-1} - \sum_{k=1}^{s-1} (-1)^k (D^{-1}L )^kD^{-1}.
\end{split}\end{equation}
The previous analysis of $D^{-1} L$ shows that the last summation is $O(\frac{\mu}{1+\mu})$. We also have 
\begin{equation}
I - D^{-1} = \text{diag}\Big\{0,1-\frac{1}{1+\mu a_{22}},\dots,1-\frac{1}{1+\mu a_{ss}}\Big\} = O\left(\frac{\mu}{1+\mu}\right).
\end{equation}
Therefore \eqref{lem_mu_2} follows. 

Finally we prove \eqref{lem_mu_3}. From \eqref{muA}, it is clear that \eqref{lem_mu_3} holds for $i=2,\dots,s-1$ because these off-diagonal terms only comes from the summation $- \sum_{k=1}^{s-1} (-1)^k (D^{-1}L )^kD^{-1}
$, and the last multiplication by $D^{-1}$ gives an extra factor $\frac{1}{1+\mu a_{ii}}$. To see that the same is true for $i=1$, we notice that $A(I+\mu A)^{-1}\vec{v}=(I+\mu A)^{-1}A\vec{v}=0$ where $\vec{v}$ is in condition {\bf (A)}. It is clear that $v_1\ne 0$ since $A(2:s,2:s)$ is invertible. Then, by {\bf (A)}, we have
\begin{equation}
\sum_{i=1}^{s-1}\big(A (I+\mu A)^{-1})\big)_{s,i} v_i = 0,
\end{equation}
which implies \eqref{lem_mu_3} for $i=1$ from \eqref{lem_mu_3} for other $i$.
\end{proof}

Denote
\begin{equation}
\mu=\frac{\Delta t}{\varepsilon}.
\end{equation}
We use the vector notation 
\begin{equation}\label{vecuw}
\vec{w} = (w(t_n+c_1\Delta t),\dots,w(t_n+c_s\Delta t))^\top,\quad \vec{E}_W = (E_{W}^{(1)},\dots, E_{W}^{(s)})^\top.
\end{equation}
Then, noticing that $c_1=0$, we may write the first equation of \eqref{sch_exact} (together with its $u$-counterpart) as
\begin{equation}\label{sch_vec_exact}\begin{split}
& \vec{u} = u(t_n)\vec{e} -\Delta t \tilde{A} (b\partial_x \vec{u} + \partial_x \vec{w}) - \vec{E}_U,  \\
& \vec{w} = w(t_n)\vec{e} -\Delta t \tilde{A} ((1-b^2)\partial_x \vec{u} - b\partial_x \vec{w})  - \mu A \vec{w} - \vec{E}_W.\\
\end{split}\end{equation}

Denote
\begin{equation}
\vec{W}_e = \vec{W} - \vec{w},
\end{equation}
as the vector of numerical error in the $n$-th time step (with $\vec{W}$ given in \eqref{vecUW} and $\vec{w}$ given in \eqref{vecuw}, and similarly for $u$). We subtract \eqref{sch_vec_exact} with \eqref{sch_vec} and get
\begin{equation}\label{sch_vec_error}\begin{split}
& \vec{U}_e = U^{(1)}_e\vec{e} -\Delta t \tilde{A} (b\partial_x \vec{U}_e + \partial_x \vec{W}_e) + \vec{E}_U,  \\
& \vec{W}_e = W^{(1)}_e\vec{e} -\Delta t \tilde{A} ((1-b^2)\partial_x \vec{U}_e - b\partial_x \vec{W}_e)  - \mu A \vec{W}_e + \vec{E}_W. \\
\end{split}\end{equation}

Now we absorb the error vectors by introducing
\begin{equation}
\vec{U}_{e*} :=  \vec{U}_e - \vec{E}_U,\quad (I+\mu  A)\vec{W}_{e*} := (I+\mu  A)\vec{W}_e - \vec{E}_W,
\end{equation}
i.e.,
\begin{equation}
\vec{U}_{e} =  \vec{U}_{e*} + \vec{E}_U,\quad \vec{W}_{e} = \vec{W}_{e*} + (I+\mu  A)^{-1} \vec{E}_W.
\end{equation}
\eqref{sch_vec_error} can be written as
\begin{equation}\label{sch_vec_st}\begin{split}
& \vec{U}_{e*} = U^{(1)}_e\vec{e} -\Delta t \tilde{A} (b\partial_x \vec{U}_{e*} + \partial_x \vec{W}_{e*}) -\Delta t \tilde{A}\vec{F}_U,   \\
& \vec{W}_{e*} = W^{(1)}_e\vec{e} -\Delta t \tilde{A} ((1-b^2)\partial_x \vec{U}_{e*} - b\partial_x \vec{W}_{e*})  - \mu  A \vec{W}_{e*} -\Delta t \tilde{A}\vec{F}_W, \\
\end{split}\end{equation}
where we denote
\begin{equation}\label{FUW}\begin{split}
& \vec{F}_U =  b\partial_x \vec{E}_U + (I+\mu  A)^{-1}\partial_x \vec{E}_W,  \\
& \vec{F}_W = (1-b^2)\partial_x \vec{E}_U - b(I+\mu  A)^{-1}\partial_x \vec{E}_W.  \\
\end{split}\end{equation}

Notice that the first component of $\vec{U}_{e*}$ is exactly $U^{(1)}_e=U_e^n$. Multiplying the $\vec{U}_{e*},\vec{W}_{e*}$ equations by $\vec{U}_{e*}^\top M,\vec{W}_{e*}^\top M$  respectively and integrating in $x$, using Lemma~\ref{lem_v1} we get
\begin{subequations}\label{sch_vec_st1}\begin{align}
 \nonumber\textbf{MAIN} & \textbf{ ENERGY ESTIMATE 1: from $U^n_e$ to $U_{e*}^{(s)}$} \\
 \label{sch_vec_st1a} \|U_{e*}^{(s)}\|^2 \le & \|U^{n}_e\|^2 -  c \|\delta \vec{U}_{e*}\|^2 -\Delta t \int \vec{U}_{e*}^\top M \tilde{A} (b\partial_x \vec{U}_{e*} + \partial_x \vec{W}_{e*})\rd{x}  -\Delta t \int \vec{U}_{e*}^\top M \tilde{A}\vec{F}_U\rd{x},  \\
 \nonumber\|W_{e*}^{(s)}\|^2 \le & \|W^{n}_e\|^2 -  c \|\delta \vec{W}_{e*}\|^2 -\Delta t \int \vec{W}_{e*}^\top M\tilde{A} ((1-b^2)\partial_x \vec{U}_{e*} - b\partial_x \vec{W}_{e*}) \rd{x}  \\ 
&\label{sch_vec_st1b} -\Delta t \int \vec{W}_{e*}^\top M \tilde{A}\vec{F}_W\rd{x} - \mu  \int \vec{W}_{e*}^\top M A \vec{W}_{e*}\rd{x}.
\end{align}\end{subequations}

The error of $U^{n+1},W^{n+1}$ satisfies
\begin{equation}\begin{split}
& U^{n+1}_e = U^n_e -\Delta t \sum_{j=1}^{s} \tilde{b}_j (b\partial_x U^{(j)}_e + \partial_x W^{(j)}_e) + E^{n+1}_U,\\
& W^{n+1}_e = W^n_e -\Delta t \sum_{j=1}^{s}\tilde{b}_j ((1-b^2)\partial_x U^{(j)}_e - b\partial_x W^{(j)}_e) - \mu \sum_{j=1}^s a_{s j} W^{(j)}_e  + E^{n+1}_W. \\
\end{split}\end{equation}
We use vector notation and rewrite it with $U_{e*},W_{e*}$ as
\begin{equation}\label{sch_n1_e}\begin{split}
U^{n+1}_e = & U^n_e -\Delta t \vec{\tilde{b}} (b\partial_x \vec{U}_{e*} + \partial_x \vec{W}_{e*}) -\Delta t \vec{\tilde{b}} \vec{F}_U + E^{n+1}_U,\\
W^{n+1}_e = & W^n_e -\Delta t \vec{\tilde{b}} ((1-b^2)\partial_x \vec{U}_{e*} - b\partial_x \vec{W}_{e*}) - \mu A_s \vec{W}_{e*}  \\
& -\Delta t \vec{\tilde{b}} \vec{F}_W  - \mu A_s (I+\mu  A)^{-1} \vec{E}_W + E^{n+1}_W, \\
\end{split}\end{equation}
where $A_s$ denotes the last row of the matrix $A$ (and similar notation is used for the last row of other matrices).

Subtracting with the last rows of the vector equations \eqref{sch_vec_st}, we get
\begin{equation}\label{energy_e_n0}\begin{split}
& U^{n+1}_e = U^{(s)}_{e*} -\Delta t (\vec{\tilde{b}}-\tilde{A}_s) (b\partial_x \vec{U}_{e*} + \partial_x \vec{W}_{e*}) -\Delta t (\vec{\tilde{b}}-\tilde{A}_s) \vec{F}_U + E^{n+1}_U,\\
& W^{n+1}_e = W^{(s)}_{e*} -\Delta t (\vec{\tilde{b}}-\tilde{A}_s) ((1-b^2)\partial_x \vec{U}_{e*} - b\partial_x \vec{W}_{e*})   -\Delta t(\vec{\tilde{b}}-\tilde{A}_s) \vec{F}_W  - \mu A_s (I+\mu  A)^{-1} \vec{E}_W + E^{n+1}_W. \\
\end{split}\end{equation}
Then we do energy estimate similarly to \eqref{energyn1}: multiply by $2U^{n+1}_e,2W^{n+1}_e$ respectively and integrate, and add with \eqref{sch_vec_st1}. This gives
\begin{subequations}\label{energy_e_n01}\begin{align}
\nonumber\textbf{MAIN} & \textbf{ ENERGY ESTIMATE 2: from  $U_{e*}^{(s)}$ to $U^{n+1}_e$} \\
 \nonumber\|U^{n+1}_e\|^2 = & \|U^{(s)}_{e*}\|^2 - \|U^{n+1}_e-U^{(s)}_{e*}\|^2 -2\Delta t \int U^{n+1}_e(\vec{\tilde{b}}-\tilde{A}_s) (b\partial_x \vec{U}_{e*} + \partial_x \vec{W}_{e*})\rd{x} \\
& \label{energy_e_n01a} -2\Delta t \int U^{n+1}_e(\vec{\tilde{b}}-\tilde{A}_s) \vec{F}_U\rd{x} + 2\int U^{n+1}_e E^{n+1}_U\rd{x},\\
 \nonumber \|W^{n+1}_e\|^2 = & \|W^{(s)}_{e*}\|^2 - \|W^{n+1}_e-W^{(s)}_{e*}\|^2 -2\Delta t \int W^{n+1}_e(\vec{\tilde{b}}-\tilde{A}_s) ((1-b^2)\partial_x \vec{U}_{e*} - b\partial_x \vec{W}_{e*}) \rd{x}  \\
&  \label{energy_e_n01b} -2\Delta t\int W^{n+1}_e(\vec{\tilde{b}}-\tilde{A}_s) \vec{F}_W\rd{x} + 2\int W^{n+1}_e E^{n+1}_W\rd{x}  - \underline{2\int W^{n+1}_e \mu A_s (I+\mu  A)^{-1} \vec{E}_W\rd{x}} . 
\end{align}\end{subequations}

\begin{remark}
To handle the low stage order of intermediate stages, the main technique we use here is the auxiliary error vector $\vec{W}_{e*}$. Comparing \eqref{sch_vec_st} with \eqref{sch_vec_error}, we see that the extra terms involving $\vec{E}_W$ in \eqref{sch_vec_st} has an extra $\Delta t$ factor in front, making the influence of these terms smaller. This enables us to conduct the energy estimate for $W_{e*}^{(s)}$ easily via \eqref{sch_vec_st1}. In fact, we will see in the next section (c.f., \eqref{sch_vec_stst}) that this trick can be applied more than once, if one aims to study the uniform accuracy of higher order. All the difficulty involving the lower stage order is finally unwrapped in \eqref{energy_e_n01}, in which we recover $W^{n+1}_e$ from $W_{e*}^{(s)}$. The underlined term therein is the most difficult term, and we will analyze it by using the delicate properties of the matrix $(I+\mu  A)^{-1}$ as stated in Lemma \ref{lem_mu}.
\end{remark}

\subsection{Combined energy estimate: prove first order uniform accuracy}\label{sec_second3}

We will first prove the first order uniform accuracy of the scheme in this subsection, and then improve it to second order uniform accuracy in the next subsection.

We take $(1-b^2)\eqref{sch_vec_st1a}+\eqref{sch_vec_st1b}$, and the same energy estimate as in \eqref{vec_energy3} gives
\begin{equation}\label{energy_e_st}\begin{split}
 (1-b^2)\|U_{e*}^{(s)}\|^2+\|W_{e*}^{(s)}\|^2 
\le & (1+C\Delta t)((1-b^2)\|U^{n}_e\|^2+\|W^{n}_e\|^2)  -  c \|\delta \vec{U}_{e*}\|^2 -  c \|\delta \vec{W}_{e*}\|^2  \\
& -\Delta t (1-b^2) \int \vec{U}_{e*}^\top M \tilde{A}\vec{F}_U \rd{x}  -\Delta t  \int \vec{W}_{e*}^\top M \tilde{A}\vec{F}_W \rd{x}, \\
\end{split}\end{equation}
where $\vec{F}_U$ and $\vec{F}_W$ consist of linear combinations of $\partial_x E_U^{(i)}$ and $\partial_x E_W^{(i)}$ with $O(1)$ coefficients (due to \eqref{lem_mu_1}). To treat the terms with $\vec{F}_U$ and $\vec{F}_W$, we have the estimate
\begin{equation}\label{cons4}
\Delta t\left| \int U_{e*}^{(i)} \partial_x E_{U}^{(j)}\rd{x} \right| \le \Delta t (\|U_{e*}^{(i)}\|^2 + \|\partial_x E_{U}^{(j)}\|^2) \le C\Delta t \|U_e^n\|^2 + C\Delta t \|U_{e*}^{(i)}-U_e^n\|^2 + C\Delta t^5,
\end{equation}
since $\partial_x E_{U}^{(j)}= O(\Delta t^2)$ for every $j$ by Lemma \ref{lem_LTE2} (where consistency of initial data up to order 4 is used), and similarly for terms with $W$. Therefore, absorbing $C\Delta t \|U_{e*}^{(i)}-U_e^n\|^2$ by the good term $ -  c \|\delta \vec{U}_{e*}\|^2$ for $\Delta t \le c$, we get
\begin{equation}\begin{split}
& (1-b^2)\|U_{e*}^{(s)}\|^2+\|W_{e*}^{(s)}\|^2 
\le  (1+C\Delta t)((1-b^2)\|U^{n}_e\|^2+\|W^{n}_e\|^2) -  c \|\delta \vec{U}_{e*}\|^2 -  c \|\delta \vec{W}_{e*}\|^2 + O(\Delta t^5).
\end{split}\end{equation}

We take $(1-b^2)\eqref{energy_e_n01a}+\eqref{energy_e_n01b}$, conducting a similar energy estimate as in \eqref{energyn1}, and adding with the above estimate gives
\begin{equation}\label{energy_e_n1}\begin{split}
& (1-b^2)\|U_{e}^{n+1}\|^2+\|W_{e}^{n+1}\|^2 \\
\le&  (1+C\Delta t)((1-b^2)\|U^{n}_e\|^2+\|W^{n}_e\|^2) -  c \|\delta \vec{U}_{e*}\|^2 -  c \|\delta \vec{W}_{e*}\|^2 - c\|U_{e}^{n+1}-U^{(s)}_{e*}\|^2 - c\|W_{e}^{n+1}-W^{(s)}_{e*}\|^2 \\
& -2\Delta t (1-b^2)\int  U_{e}^{n+1}(\vec{\tilde{b}}-\tilde{A}_s) \vec{F}_U\rd{x} + 2\int  (1-b^2)U_{e}^{n+1} E^{n+1}_U\rd{x} -2\Delta t \int  W_{e}^{n+1}(\vec{\tilde{b}}-\tilde{A}_s) \vec{F}_W\rd{x} \\
&+ 2\int  W_{e}^{n+1}E^{n+1}_W\rd{x}    \underline{ - 2 \int  W_{e}^{n+1}\mu A_s (I+\mu  A)^{-1} \vec{E}_W\rd{x}}+ O(\Delta t^5). \\
\end{split}\end{equation}
In the terms involving $\vec{F}_U$ and $\vec{F}_W$, one can estimate as
\begin{equation}
\Delta t \left|\int  U_{e}^{n+1} \partial_x E^{(j)}_U\rd{x}\right| \le c\Delta t \|U_{e}^{n+1}\|^2 + C\Delta t \|\partial_x E^{(j)}_U\|^2\le c\Delta t \|U_{e}^{n+1}\|^2 + O(\Delta t^5),
\end{equation}
and the term $c\Delta t \|U_{e}^{n+1}\|^2$ can be absorbed by LHS. 

The term $\int  U_{e}^{n+1} E^{n+1}_U\rd{x}$ (and the similar one with $W$) can be estimated similarly using $E^{n+1}_U=O(\Delta t^3)$. Therefore all these non-stiff terms give a contribution of at most $O(\Delta t^5)$.

The worst term is the stiff (underlined) term $\int  W_{e}^{n+1}\mu A_s (I+\mu A)^{-1} \vec{E}_W\rd{x}$. For the matrix $\mu A(I+\mu A)^{-1}$, the best one can say is that it has elements $\le C\frac{\mu}{1+\mu}$ from \eqref{lem_mu_2}. Therefore this term can be bounded by 
\begin{equation}
\left|\int  W_{e}^{n+1}\mu A_s (I+\mu A)^{-1} \vec{E}_W\rd{x}\right| \le c\Delta t \|W_{e}^{n+1}\|^2 + \frac{C}{\Delta t}\|\mu A_s (I+\mu A)^{-1} \vec{E}_W\|^2\le c\Delta t \|W_{e}^{n+1}\|^2 + C\Delta t^3\frac{\mu^2}{(1+\mu)^2},
\end{equation}
using $E^{(i)}_W = O(\Delta t^2)$ from Lemma \ref{lem_LTE2}, and $c\Delta t \|W_{e}^{n+1}\|^2$ can be absorbed by LHS. Therefore we finally get
\begin{equation}\begin{split}
& (1-b^2)\|U_{e}^{n+1}\|^2+\|W_{e}^{n+1}\|^2 
\le  (1+C\Delta t)((1-b^2)\|U^{n}_e\|^2+\|W^{n}_e\|^2) + O\Big(\Delta t^5+\Delta t^3\frac{\mu^2}{(1+\mu)^2}\Big). \\
\end{split}\end{equation}
Using Gronwall inequality, we get
\begin{equation}\label{final_est1}\begin{split}
& (1-b^2)\|U_{e}^n\|^2+\|W_{e}^n\|^2 \le C(T)\Big(\Delta t^4+\Delta t^2\frac{\mu^2}{(1+\mu)^2}\Big),\quad \forall n\Delta t \le T.
\end{split}\end{equation}

Notice that the same error estimate works for any intermediate stages $U^{n,(j)}_{e*},W^{n,(j)}_{e*}$. Also, the above estimate only utilizes the consistency of initial data up to order 4 (c.f. \eqref{cons4}). Since we assumed the consistency of initial data up to order 6, the same error estimate works for the $x$-derivatives of these quantities up to order 2. This will be used in the next subsection.

\begin{remark}
Notice that \eqref{final_est1} implies first order uniform accuracy. To be more precise, if $\varepsilon=O(1)$, i.e., $\mu=O(\Delta t)$, then it gives second order accuracy, but it degenerates to first order accuracy for large $\mu$, i.e., small $\varepsilon$. This motivates us to utilize the last term in \eqref{sch_vec_st1b}, a coercive term proportional to $\mu$, to study the second order uniform accuracy in the next subsection.
\end{remark}

Since  \eqref{final_est1} already implies Theorem \ref{thm_2nd} in the case of $\varepsilon=O(1)$, we may assume $\varepsilon\le 1$ in the rest of this proof. Thus $\Delta t^2\frac{\mu^2}{(1+\mu)^2}$ is always the worst term above since $\Delta t\le c$ is assumed.

\subsection{Improve to second order}\label{sec_second4}

We then improve the error estimate \eqref{final_est1} to uniform second order for $W$. We start by revisiting \eqref{sch_vec_st1b}. Since {\bf (M1)} and {\bf (A)} are assumed, Lemma \ref{lem_v} gives the coercive estimate 
\begin{equation}
\mu\int \vec{W}_{e*}^\top M A \vec{W}_{e*}\rd{x} \ge c\mu \|W_{e*}^{(s)}\|^2,
\end{equation}
which provides a good term $-c\mu \|W_{e*}^{(s)}\|^2$ in \eqref{sch_vec_st1b}. We estimate the integral in the first line of \eqref{sch_vec_st1b} as
\begin{equation}\label{We_energy_0}\begin{split}
\Delta t\left| \int W^{(i)}_{e*}\partial_x U^{(j)}_{e*}\rd{x}\right| \le & c\frac{\mu}{1+\mu}\|W^{(i)}_{e*}\|^2 + C\Delta t^2\frac{1+\mu}{\mu}\|\partial_x U^{(j)}_{e*}\|^2
\le  c\frac{\mu}{1+\mu} \|W^{(i)}_{e*}\|^2 + C\Delta t^4\frac{\mu}{1+\mu}, \\
\end{split}\end{equation}
by using \eqref{final_est1} for $\|\partial_x U^{(j)}_{e*}\|^2$, and similarly for the term involving $\partial_x W^{(j)}_{e*}$. Here $c\frac{\mu}{1+\mu} \|W^{(i)}_{e*}\|^2 $ can be absorbed by $-c\mu \|W_{e*}^{(s)}\|^2$ together with good term $-c\|\delta \vec{W}_{e*}\|^2$ in \eqref{sch_vec_st1b} since
\begin{equation}
\|\delta \vec{W}_{e*}\|^2 + \mu \|W_{e*}^{(s)}\|^2 \ge \frac{\mu}{1+\mu}(\|W_{e*}^{(i)}-W_{e*}^{(s)}\|^2 +  \|W_{e*}^{(s)}\|^2) \ge \frac{\mu}{2(1+\mu)}\|W^{(i)}_{e*}\|^2.
\end{equation}

The first integral in the second line of \eqref{sch_vec_st1b} can be easily controlled by $C\Delta t \|W^n_e\|^2 + C\Delta t \|W^{(i)}_{e*}-W^n_e\|^2 + O(\Delta t^5)$ as we did for \eqref{energy_e_st}. Therefore we get
\begin{equation}\label{We_energy}\begin{split}
&\|W_{e*}^{(s)}\|^2 \le (1+C\Delta t)\|W^{n}_e\|^2 + C\Delta t^4\frac{\mu}{1+\mu} - c  \|\delta \vec{W}_{e*}\|^2 - c\mu \|W^{(s)}_{e*}\|^2,
\end{split}\end{equation}
since we assumed $\Delta t\le c,\,\varepsilon\le 1$ and thus $O(\Delta t^5)\le O(\Delta t^4\frac{\mu}{1+\mu})$.

Adding with \eqref{energy_e_n01b}, we get
\begin{equation}\label{We_energy1}\begin{split}
 \|W_{e}^{n+1}\|^2 =& (1+C\Delta t)\|W^{n}_{e}\|^2 - c\|W_{e}^{n+1}-W^{(s)}_{e*}\|^2 - c  \|\delta \vec{W}_{e*}\|^2 - c\mu \|W^{(s)}_{e*}\|^2\\
& -2\Delta t \int W_{e}^{n+1}(\vec{\tilde{b}}-\tilde{A}_s) ((1-b^2)\partial_x \vec{U}_{e*} - b\partial_x \vec{W}_{e*}) \rd{x}   -2\Delta t \int W_{e}^{n+1}(\vec{\tilde{b}}-\tilde{A}_s) \vec{F}_W\rd{x} \\
 & + 2\int W_{e}^{n+1} E^{n+1}_W\rd{x} - \underline{ 2\int W_{e}^{n+1}\mu A_s (I+\mu A)^{-1} \vec{E}_W\rd{x}}  + C\Delta t^4\frac{\mu}{1+\mu},\\
\end{split}\end{equation}
which is analogous to \eqref{energy_e_n1}. Notice that 
\begin{equation}
\|W_{e}^{n+1}-W^{(s)}_{e*}\|^2 + \mu \|W^{(s)}_{e*}\|^2 \ge \frac{\mu}{1+\mu}(\|W_{e}^{n+1}-W^{(s)}_{e*}\|^2 + \|W^{(s)}_{e*}\|^2) \ge \frac{\mu}{2(1+\mu)}\|W^{n+1}_{e}\|^2,
\end{equation}
i.e., we gain a good term $-c\frac{\mu}{1+\mu} \|W^{n+1}_{e}\|^2$ out of the terms $- c\|W_{e}^{n+1}-W^{(s)}_{e*}\|^2 - c\mu \|W^{(s)}_{e*}\|^2$. This helps us improve the worst (underlined) term estimate as
\begin{equation}
\left|\int  W_{e}^{n+1} \mu A_s (I+\mu A)^{-1} \vec{E}_W\rd{x}\right| \le c\frac{\mu}{1+\mu} \|W^{n+1}_{e}\|^2 + C\frac{1+\mu}{\mu}\Delta t^4\cdot \frac{\mu^2}{(1+\mu)^2} = c\frac{\mu}{1+\mu} \|W^{n+1}_{e}\|^2 + C\Delta t^4 \frac{\mu}{1+\mu}.
\end{equation}
By estimating other terms as what we did for \eqref{energy_e_n1} with the term involving $\vec{U}_{e*}$ treated as in \eqref{We_energy_0} (which gives $O(\Delta t^5)$ in total), we obtain
\begin{equation}\begin{split}
& \|W_{e}^{n+1}\|^2 \le (1+C\Delta t)\|W^{n}_{e}\|^2 + \frac{\mu}{1+\mu}(C\Delta t^4 - c \|W^{n+1}_{e}\|^2).
\end{split}\end{equation}
Then a bootstrap argument gives
\begin{equation}\label{final_est2}\begin{split}
\|W_{e}^{n}\|^2 \le C(T)\Delta t^4,\quad \forall n\Delta t \le T,
\end{split}\end{equation}
i.e., uniform second order accuracy of $W$. 


Finally we improve the error estimate \eqref{final_est1} to uniform second order for $U$. In \eqref{sch_vec_st1a}, now we may estimate as
\begin{equation}
\left|\Delta t\int U^{(i)}_{e*}\partial_x W^{(j)}_{e*}\right| \le C\Delta t \|U^{(i)}_{e*}\|^2 + C\Delta t \|\partial_x W^{(j)}_{e*}\|^2\le C\Delta t \|U^n_e\|^2 +C\Delta t \|U^{(i)}_{e*}-U^n_e\|^2 + C\Delta t^5,
\end{equation}
using \eqref{final_est2} for $\partial_x W^{(j)}_{e*}$. By estimating other terms in the same way as before, we obtain
\begin{equation}\begin{split}
& \|U_{e*}^{(s)}\|^2 \le (1+C\Delta t)\|U^{n}_e\|^2 -  c \|\delta \vec{U}_{e*}\|^2 + C\Delta t^5.  \\
\end{split}\end{equation}
The same treatment can be applied to the term $\Delta t\int U^{n+1}_{e}\partial_x W^{(j)}_{e*}\rd{x}$ in  \eqref{energy_e_n01a}. By estimating other terms in the same way as before and adding it with the previous estimate, 
we obtain
\begin{equation}\begin{split}
& \|U_{e}^{n+1}\|^2 \le (1+C\Delta t)\|U^{n}_e\|^2  + C\Delta t^5.  \\
\end{split}\end{equation}
This gives the uniform second order accuracy of $U$ by the Gronwall inequality, and finishes the proof of Theorem \ref{thm_2nd}.


\section{Third order uniform accuracy}
\label{sec_third}

Our main result in this section is stated as follows.
\begin{theorem}[Third order uniform accuracy of IMEX-RK schemes]\label{thm_3rd}
Under the same assumptions as in Theorem \ref{thm_2nd}, further assume 
\begin{itemize}
\item The IMEX-RK scheme satisfies the standard third order conditions (eqn (9) in \cite{PR05}).
\item The stage order conditions
\begin{equation}\label{LTE_3rd_0}
\frac{1}{2}c_i^2 = \sum_{j=1}^{i-1}\tilde{a}_{i j}c_j = \sum_{j=1}^i a_{i j}c_j,\quad i = 3,\dots,s.
\end{equation}
\item The `vanishing coefficient condition'
\begin{equation}\label{cond_A0}
\text{$\tilde{b}_2=0$ and $a_{i,2}=0,\,i=3,\dots,s$.}
\end{equation}
\item The initial data is consistent up to order $8$.
\end{itemize}
Then for any $T>0$ and $n\in\mathbb{Z}_{\ge 0}$ with $n\Delta t\le T$, we have
\begin{equation}
(1-b^2)\|U^n_e\|^2+\|W^n_e\|^2 \le C\Big(\Delta t^6 + \frac{1}{N^{12}}\Big),
\end{equation}
with $C$ independent of $\varepsilon$, $N$ and $\Delta t$.
\end{theorem}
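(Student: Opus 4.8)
The plan is to follow the blueprint of the proof of Theorem~\ref{thm_2nd} one order higher, with the three new hypotheses \eqref{LTE_3rd_0}, \eqref{cond_A0} and the elevated consistency doing the extra work. I would first prove the analogue of Lemma~\ref{lem_LTE2}: the stage-order conditions \eqref{LTE_3rd_0} raise the stages $i\ge 3$ to stage order two, so that $E_U^{(i)},E_W^{(i)}=O(\Delta t^3)$ for $i\ge 3$, the standard third-order conditions give $E_U^{n+1},E_W^{n+1}=O(\Delta t^4)$, while the second stage is stuck at stage order one, $E_U^{(2)},E_W^{(2)}=O(\Delta t^2)$. Consistency of the initial data up to order $8$ lets me carry all of these (and their $U$-counterparts) for enough $x$-derivatives, and bounds the spatial projection error by $C/N^{12}$, so the $N$-dependence can again be dropped as in Section~\ref{sec_second}.

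The core new device is a \emph{second} auxiliary error variable, the double-starred system \eqref{sch_vec_stst} anticipated in the Remark following \eqref{energy_e_n01}. In Section~\ref{sec_second} the single change $\vec W_e=\vec W_{e*}+(I+\mu A)^{-1}\vec E_W$ replaced the bare $\vec E_W$ by a forcing $-\Delta t\tilde{A}\vec F_W$ carrying one extra power of $\Delta t$, which was enough for second order. For third order I would apply the trick once more, writing $\vec U_{e*}=\vec U_{e**}+\vec H_U$ and $\vec W_{e*}=\vec W_{e**}+\vec H_W$ with $\vec H_U=-\Delta t\tilde{A}\vec F_U$ and $(I+\mu A)\vec H_W=-\Delta t\tilde{A}\vec F_W$. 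Because $\vec H_U,\vec H_W=O(\Delta t^3)$, the residual forcing on the double-starred stages is of the schematic form $-\Delta t\tilde{A}\,\partial_x\vec H=O(\Delta t^4)$, i.e.\ $O(\Delta t^7)$ per step in the energy, which is what one needs to target $\|U_e^n\|^2+\|W_e^n\|^2=O(\Delta t^6)$.

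The delicate point is that the second-stage error is one full power of $\Delta t$ short of what third order naively requires, and it must be blocked in two distinct channels. In the \emph{stiff} coupling $\mu A_s(I+\mu A)^{-1}\vec E_W$ of \eqref{sch_n1_e}, condition \eqref{cond_A0} together with ISA (so $b_2=a_{s2}=0$) makes column $2$ of $(I+\mu A)^{-1}$ vanish off the diagonal, hence $\big(A(I+\mu A)^{-1}\big)_{s,2}=0$; the troublesome $E_W^{(2)}$ then drops out and only the stages $i\ge 3$, with $E_W^{(i)}=O(\Delta t^3)$ and matrix entries $O(\mu/(1+\mu))$ by Lemma~\ref{lem_mu}, survive. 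In the \emph{final update}, after the double change of variables the $U$-forcing collapses to $-\Delta t\vec{\tilde{b}}\,\vec F_U$, whose second entry is annihilated by $\tilde b_2=0$, so only $j\ge 3$ (order $\Delta t^3$) remains; the $W$-forcing does not cancel as cleanly, but the identity $\mu A(I+\mu A)^{-1}=I-(I+\mu A)^{-1}$ shows that its second-slot coefficient is $O(\mu/(1+\mu))$, precisely the weight that the coercive term can absorb. Running the combined estimates \eqref{sch_vec_st1}--\eqref{energy_e_n01} as in Section~\ref{sec_second} then yields a base estimate $\|U_e^n\|^2+\|W_e^n\|^2\le C\big(\Delta t^6+\Delta t^4\mu^2/(1+\mu)^2\big)$, which is already third order when $\mu=O(\Delta t)$; finally I would invoke the coercive term $\mu\int\vec W_{e**}^\top MA\,\vec W_{e**}\ge c\mu\|W_{e**}^{(s)}\|^2$ of Lemma~\ref{lem_v} to gain the factor $\mu/(1+\mu)$ and close a bootstrap exactly as after \eqref{We_energy1}, obtaining $\|W_e^n\|^2=O(\Delta t^6)$; feeding this third-order $W$-bound into the cross terms upgrades $U$, and Gronwall finishes.

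The hardest part will be exactly this bookkeeping of the $O(\Delta t^2)$ second-stage error through the $\varepsilon$-singular channels, uniformly in $\mu=\Delta t/\varepsilon$. One must verify that after the double change of variables no $O(\Delta t^5)$ remnant of $E^{(2)}$ leaks either through the explicit coupling $\tilde a_{i2}$ in the stages or through the stiff last row of the $W$-update, and that every surviving term is either genuinely $O(\Delta t^6)$ or carries the weight $\mu/(1+\mu)$ absorbable by the coercive term. It is the joint action of \eqref{cond_A0}, the refined bound \eqref{lem_mu_3}, and the twice-applied auxiliary-variable trick that makes the small-$\mu$ and large-$\mu$ regimes close simultaneously.
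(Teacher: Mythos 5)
Your proposal is correct and takes essentially the same approach as the paper's own proof: the same truncation-error structure (Lemma~\ref{lem_LTE3}, with stage $2$ stuck at $O(\Delta t^2)$ and stages $i\ge 3$ at $O(\Delta t^3)$), the same second application of the auxiliary-variable substitution producing the double-starred system \eqref{sch_vec_stst} with $O(\Delta t^4)$ residual forcing, the same exploitation of \eqref{cond_A0} to make the second column of $(I+\mu A)^{-1}$ vanish off the diagonal and thereby annihilate the $E^{(2)}$ contributions in both the $\vec{\tilde{b}}$-contractions and the stiff row $\mu A_s(I+\mu A)^{-1}$ (the paper's Lemma~\ref{lem_A0}), the same intermediate estimate \eqref{final_est1_st}, and the same coercivity-plus-bootstrap upgrade for $W$ followed by $U$ via Gronwall. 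No substantive differences.
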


Among the IMEX-RK schemes considered in this paper, only the IMEX-RK scheme BHR(5,5,3)* given in the Appendix satisfies the assumptions in Theorem~\ref{thm_3rd}, hence it will exhibit at least third order uniform accuracy in time.  

We will prove this theorem in the rest of this section. Similar to Theorem \ref{thm_2nd}, we may handle the initial projection error by Lemma \ref{Fourier1} and ignore the $N$-dependence in the rest of the proof. The proof follows the same structure as that of Theorem \ref{thm_2nd} but more technical. 

\subsection{Local truncation error}

\begin{lemma}\label{lem_LTE3}
For a third order IMEX-RK scheme of type CK with $c_i=\tilde{c}_i,\,i=1,\dots,s$, assume it further satisfies condition (\ref{LTE_3rd_0}) and the initial data is consistent up to order $q\ge 4$. Then
\begin{equation}
E_W^{(2)} = O(\Delta t^2),\quad E_W^{(i)} = O(\Delta t^3),\,i=3,\dots,s,\quad E_W^{n+1} = O(\Delta t^4),
\end{equation}
and similar results hold for similar quantities with $u$ or their $x$-derivatives up to order $q-4$.
\end{lemma}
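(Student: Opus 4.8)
The plan is to prove Lemma~\ref{lem_LTE3} by Taylor expanding the defining equations \eqref{sch_exact} around $t_n$ and using the order conditions to cancel the leading terms, exactly as in the proof of Lemma~\ref{lem_LTE2} but carried one order further. First I would record the regularity input: by Lemma~\ref{lem_reg}, consistency of initial data up to order $q\ge 4$ gives uniform bounds $\|\partial_t^{r}u\|,\|\partial_t^r w\|\le C$ for $r\le 4$ (with appropriate $x$-derivatives), together with the crucial \emph{smallness} estimates $\|\partial_t^r w\|\le C\eps$ for $r\le q-1$, i.e. $\|\partial_t w\|,\|\partial_{tt}w\|,\|\partial_{ttt}w\|\le C\eps$ under consistency up to order $4$. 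These $\eps$-gains on the time-derivatives of $w$ are what make the stiff term $\frac{\Delta t}{\eps}\sum a_{ij}w(t_n+c_j\Delta t)$ harmless: each extra power of $\Delta t$ produced by Taylor expansion of $w$ comes multiplied by a $\partial_t^r w=O(\eps)$, which cancels the $1/\eps$ in front.

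Next I would treat the three cases. For $E_W^{(2)}$: since the scheme is type CK, stage $2$ only has the single stage-order-$1$ relation $c_2=\sum_j\tilde a_{2j}=\sum_j a_{2j}$ available (the quadratic relation \eqref{LTE_3rd_0} is imposed only for $i\ge 3$), so Taylor expanding to first order cancels the $O(\Delta t)$ terms via $(u,w)$ solving \eqref{eq1}, and the remainder is genuinely $O(\Delta t^2)$ — matching the statement $E_W^{(2)}=O(\Delta t^2)$. For $E_W^{(i)}$, $i\ge 3$: I expand $w(t_n+c_i\Delta t)$, the explicit flux terms $\partial_x u,\partial_x w$ evaluated at $t_n+c_j\Delta t$, and the stiff sum $\sum_j a_{ij}w(t_n+c_j\Delta t)$ each to second order in $\Delta t$. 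The $O(\Delta t)$ terms cancel by the linear stage-order condition, and the $O(\Delta t^2)$ terms cancel precisely because of the quadratic stage-order conditions \eqref{LTE_3rd_0}, namely $\tfrac12 c_i^2=\sum_j\tilde a_{ij}c_j=\sum_j a_{ij}c_j$, after substituting $\partial_{tt}w$, $\partial_t\partial_x u$, $\partial_t\partial_x w$ from differentiating \eqref{eq1} in $t$. The leftover is $O(\Delta t^3)$, and in the stiff remainder $O(\tfrac{\Delta t^3}{\eps}\partial_{tt}w)=O(\Delta t^3)$ by the $\|\partial_{tt}w\|\le C\eps$ bound. For $E_W^{n+1}$: I expand to third order and invoke the standard \emph{second} order conditions (already assumed via Theorem~\ref{thm_2nd}) together with the \emph{third} order conditions (eqn (9) of \cite{PR05}) on $(\vec{\tilde b},\vec b,\tilde A,A,\vec c)$ to cancel all terms up to $O(\Delta t^3)$, leaving $E_W^{n+1}=O(\Delta t^4)$, with the stiff remainder $O(\tfrac{\Delta t^4}{\eps}\partial_{ttt}w)=O(\Delta t^4)$ controlled again by $\|\partial_{ttt}w\|\le C\eps$.

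The results for $u$ follow the same scheme but are slightly easier, since $u$ has no stiff term and its time-derivatives enjoy only the $O(1)$ bounds (no $\eps$-gain is needed because there is no $1/\eps$ to beat). For the $x$-derivatives up to order $q-4$, I differentiate \eqref{sch_exact} in $x$ the requisite number of times before expanding; this is legitimate because consistency up to order $q$ furnishes the needed higher mixed-derivative bounds $\|\partial_t^{r_1}\partial_x^{r_2}u\|,\|\partial_t^{r_1}\partial_x^{r_2}w\|\le C$ from \eqref{thm_reg_1} (and the $\eps$-versions from \eqref{thm_reg_2}), with $q-4$ spatial derivatives leaving enough regularity budget for the three time derivatives consumed in the $E_W^{n+1}$ expansion.

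The main obstacle I anticipate is \textbf{not} the algebra of cancellation but rather the bookkeeping of the stiff remainder: one must verify at every order that each surviving power $\Delta t^m/\eps$ is attached to a time-derivative of $w$ carrying a compensating factor of $\eps$, so that the estimate is genuinely \emph{uniform} in $\eps$ rather than merely holding for fixed $\eps$. Concretely, the delicate point is that the $O(\Delta t^3/\eps\,\partial_{tt}w)$ term in $E_W^{(i)}$ and the $O(\Delta t^4/\eps\,\partial_{ttt}w)$ term in $E_W^{n+1}$ must be matched against exactly the right $\eps$-smallness estimate from Lemma~\ref{lem_reg}; this is why consistency up to order $q\ge 4$ (ensuring $\|\partial_{ttt}w\|\le C\eps$) is the sharp hypothesis here, one order higher than in Lemma~\ref{lem_LTE2}.
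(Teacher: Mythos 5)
Your proposal is correct and follows exactly the route the paper intends: the paper omits this proof as ``similar to Lemma~\ref{lem_LTE2},'' and your argument is precisely that Taylor-expansion argument carried one order further, with the $O(\Delta t^2)$ stage terms cancelled by \eqref{LTE_3rd_0}, the $O(\Delta t^3)$ terms in $E_W^{n+1}$ cancelled by the quadrature-type third order conditions, and each stiff remainder $\Delta t^{m}/\eps\,\partial_t^{m-1}w$ controlled by the $\eps$-smallness bounds \eqref{thm_reg_2}. Your identification of consistency up to order $q\ge 4$ (yielding $\|\partial_{ttt}w\|\le C\eps$ and $\|\partial_t^4 w\|\le C$) as the sharp hypothesis is also exactly right.
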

The proof of this lemma is similar to Lemma \ref{lem_LTE2} and thus omitted.



\subsection{Energy estimates for the error}

Starting from \eqref{sch_vec_st}, we do a further change of variable in order to absorb the last term involving $\vec{E}_U,\vec{E}_W$ in both equations:
\begin{equation}\begin{split}
& \vec{U}_{e**} =  \vec{U}_{e*} +\Delta t \tilde{A} \vec{F}_U,\\
& (I+\mu  A)\vec{W}_{e**} = (I+\mu  A)\vec{W}_{e*}  +\Delta t \tilde{A} \vec{F}_W, 
\end{split}\end{equation}
i.e.,
\begin{equation}\begin{split}
& \vec{U}_{e*} =  \vec{U}_{e**} -\Delta t \tilde{A} \vec{F}_U,\\
& \vec{W}_{e*} = \vec{W}_{e**}  -\Delta t(I+\mu  A)^{-1} \tilde{A} \vec{F}_W.
\end{split}\end{equation}
Substituting into \eqref{sch_vec_st}, we get
\begin{equation}\label{sch_vec_stst}\begin{split}
& \vec{U}_{e**} = U^{(1)}_e\vec{e} -\Delta t \tilde{A} \Big(b\partial_x \vec{U}_{e**} + \partial_x \vec{W}_{e**}\Big)  + \Delta t^2 \tilde{A} \vec{G}_U, \\
& \vec{W}_{e**} = W^{(1)}_e\vec{e} -\Delta t \tilde{A} \Big((1-b^2)\partial_x \vec{U}_{e**} - b\partial_x \vec{W}_{e**}\Big)  - \mu  A \vec{W}_{e**}  + \Delta t^2 \tilde{A} \vec{G}_W,
\end{split}\end{equation}
where we denote
\begin{equation}\begin{split}
\vec{G}_U = & b \tilde{A}\partial_x\vec{F}_U +  (I+\mu  A)^{-1} \tilde{A} \partial_x\vec{F}_W, \\
\vec{G}_W = & (1-b^2) \tilde{A}\partial_x \vec{F}_U  - b(I+\mu  A)^{-1} \tilde{A} \partial_x \vec{F}_W.
\end{split}\end{equation}
Recall the definition of $\vec{F}_U$ and $\vec{F}_W$ in \eqref{FUW}, we see that $\vec{G}_U$ and $\vec{G}_W$ consist of linear combinations of $\partial_{xx} E_U^{(i)}$ and $\partial_{xx} E_W^{(i)}$ with $O(1)$ coefficients (due to \eqref{lem_mu_1}).

Multiplying the $\vec{U}_{e**},\vec{W}_{e**}$ equations by $\vec{U}_{e**}^\top M,\vec{W}_{e**}^\top M$  respectively and integrate, we get
\begin{subequations}\label{sch_vec_stst1}\begin{align}
\nonumber \textbf{MAIN} & \textbf{ ENERGY ESTIMATE 1: from $U^n_e$ to $U_{e**}^{(s)}$} \\
\label{sch_vec_stst1a} \|U_{e**}^{(s)}\|^2 \le & \|U^{n}_e\|^2 -  c \|\delta \vec{U}_{e**}\|^2  -\Delta t \int \vec{U}_{e**}^\top M \tilde{A} (b\partial_x \vec{U}_{e**} + \partial_x \vec{W}_{e**})\rd{x}  +\Delta t^2 \int \vec{U}_{e**}^\top M\tilde{A} \vec{G}_U \rd{x},   \\
\nonumber \|W_{e**}^{(s)}\|^2 \le & \|W^{n}_e\|^2 -  c \|\delta \vec{W}_{e**}\|^2 -\Delta t \int \vec{W}_{e**}^\top M\tilde{A} ((1-b^2)\partial_x \vec{U}_{e**} - b\partial_x \vec{W}_{e**}) \rd{x} \\
\label{sch_vec_stst1b} & - \mu  \int \vec{W}_{e**}^\top M A \vec{W}_{e**}\rd{x}  +\Delta t^2 \int \vec{W}_{e**}^\top M\tilde{A} \vec{G}_W\rd{x}.
\end{align}\end{subequations}

Then we rewrite \eqref{sch_n1_e} with $U_{e**},W_{e**}$ as
\begin{equation}\begin{split}
U^{n+1}_e = & U^n_e -\Delta t \vec{\tilde{b}} \Big(b\partial_x \vec{U}_{e**} + \partial_x \vec{W}_{e**}\Big) -\Delta t \vec{\tilde{b}} \vec{F}_U + E^{n+1}_U  + \Delta t^2 \vec{\tilde{b}} \vec{G}_U, \\
W^{n+1}_e = & W^n_e -\Delta t \vec{\tilde{b}} \Big((1-b^2)\partial_x \vec{U}_{e**} - b\partial_x \vec{W}_{e**}\Big) - \mu A_s \vec{W}_{e**}   -\Delta t \vec{\tilde{b}} \vec{F}_W  - \mu A_s (I+\mu  A)^{-1} \vec{E}_W + E^{n+1}_W \\
& + \Delta t^2 \vec{\tilde{b}} \vec{G}_W  + \Delta t \mu A_s(I+\mu  A)^{-1} \tilde{A} \vec{F}_W.
\end{split}\end{equation}

Subtracting with the last row of the vector equation \eqref{sch_vec_stst}, we get
\begin{equation}\label{energy_e_n0stst}\begin{split}
U^{n+1}_e = & U^{(s)}_{e**} -\Delta t (\vec{\tilde{b}}-\tilde{A}_s) (b\partial_x \vec{U}_{e**} + \partial_x \vec{W}_{e**}) -\Delta t \vec{\tilde{b}} \vec{F}_U + E^{n+1}_U  + \Delta t^2 (\vec{\tilde{b}}-\tilde{A}_s) \vec{G}_U, \\
W^{n+1}_e = & W^{(s)}_{e**} -\Delta t (\vec{\tilde{b}}-\tilde{A}_s) ((1-b^2)\partial_x \vec{U}_{e**} - b\partial_x \vec{W}_{e**})    -\Delta t \vec{\tilde{b}} \vec{F}_W  - \mu A_s (I+\mu  A)^{-1} \vec{E}_W + E^{n+1}_W \\
& + \Delta t^2 (\vec{\tilde{b}}-\tilde{A}_s) \vec{G}_W  + \Delta t \mu A_s(I+\mu  A)^{-1} \tilde{A} \vec{F}_W.
\end{split}\end{equation}
Then we do energy estimate similar to \eqref{energyn1}: multiplying by $2U^{n+1}_e,2W^{n+1}_e$ respectively and integrate, and adding with \eqref{sch_vec_st1}, we get
\begin{subequations}\label{energy_e_n01stst}\begin{align}
\nonumber \textbf{MAIN} & \textbf{ ENERGY ESTIMATE 2: from  $U_{e**}^{(s)}$ to $U^{n+1}_e$} \\
\nonumber \|U^{n+1}_e\|^2 = & \|U^{(s)}_{e**}\|^2 -  \|U^{n+1}_e-U^{(s)}_{e**}\|^2 - 2\Delta t \int U^{n+1}_e (\vec{\tilde{b}}-\tilde{A}_s) (b\partial_x \vec{U}_{e**} + \partial_x \vec{W}_{e**})\rd{x} \\
\label{energy_e_n01ststa} & -2\Delta t \int U^{n+1}_e\vec{\tilde{b}} \vec{F}_U\rd{x} + 2\int U^{n+1}_e E^{n+1}_U\rd{x} + 2\Delta t^2 \int U^{n+1}_e(\vec{\tilde{b}}-\tilde{A}_s) \vec{G}_U\rd{x}, \\
\nonumber \|W^{n+1}_e\|^2 = & \|W^{(s)}_{e**}\|^2 - \|W^{n+1}_e-W^{(s)}_{e**}\|^2  -2\Delta t \int W^{n+1}_e (\vec{\tilde{b}}-\tilde{A}_s) ((1-b^2)\partial_x \vec{U}_{e**} - b\partial_x \vec{W}_{e**})\rd{x}   \\
\nonumber & -2\Delta t  \int W^{n+1}_e \vec{\tilde{b}} \vec{F}_W\rd{x}  -  2\int W^{n+1}_e \mu A_s (I+\mu  A)^{-1} \vec{E}_W\rd{x} +  2\int W^{n+1}_e E^{n+1}_W\rd{x} \\
\label{energy_e_n01ststb} & + 2\Delta t^2  \int W^{n+1}_e (\vec{\tilde{b}}-\tilde{A}_s) \vec{G}_W\rd{x} + 2\Delta t  \int W^{n+1}_e \mu A_s(I+\mu  A)^{-1} \tilde{A} \vec{F}_W\rd{x}.
\end{align}\end{subequations}

\subsection{Combined energy estimate: prove second order uniform accuracy}

We will first prove the second order uniform accuracy of the scheme in this subsection, and then improve it to third order uniform accuracy in the next subsection.

We take $(1-b^2)\eqref{sch_vec_stst1a} + \eqref{sch_vec_stst1b}$. The same energy estimate as in \eqref{vec_energy3} gives
\begin{equation}\label{energy_e_stst}\begin{split}
& (1-b^2)\|U_{e**}^{(s)}\|^2 + \|W_{e**}^{(s)}\|^2 \le (1-b^2)\|U^{n}_e\|^2 + \|W^{n}_e\|^2 -  c \|\delta \vec{U}_{e**}\|^2 -  c \|\delta \vec{W}_{e**}\|^2  \\
& +\Delta t^2 (1-b^2)\int \vec{U}_{e**}^\top M\tilde{A} \vec{G}_U \rd{x}   +\Delta t^2 \int \vec{W}_{e**}^\top M\tilde{A} \vec{G}_W\rd{x}.
\end{split}\end{equation}
To treat the terms with $\vec{G}_U$ and $\vec{G}_W$, we have the estimate
\begin{equation}
\Delta t^2\left| \int U_{e**}^{(i)} \partial_{xx} E_{U}^{(j)} \right| \le \Delta t^2 \Big(\frac{1}{\Delta t}\|U_{e**}^{(i)}\|^2 +\Delta t \|\partial_{xx} E_{U}^{(j)}\|^2\Big) \le C\Delta t \|U_e^n\|^2 + C\Delta t \|U_{e**}^{(i)}-U_e^n\|^2 + C\Delta t^7,
\end{equation}
since $\partial_{xx} E_{U}^{(j)}= O(\Delta t^2)$ for every $j$ by Lemma \ref{lem_LTE3}, and similar for terms with $W$. Therefore we get
\begin{equation}\begin{split}
& (1-b^2)\|U_{e**}^{(s)}\|^2+\|W_{e**}^{(s)}\|^2 
\le  (1+C\Delta t)((1-b^2)\|U^{n}_e\|^2+\|W^{n}_e\|^2) -  c \|\delta \vec{U}_{e**}\|^2 -  c \|\delta \vec{W}_{e**}\|^2 + O(\Delta t^7).
\end{split}\end{equation}

Taking $(1-b^2)\eqref{energy_e_n01ststa}+\eqref{energy_e_n01ststb}$ and adding with the above estimate gives
\begin{equation}\label{energy_e_n1stst}\begin{split}
& (1-b^2)\|U^{n+1}_e\|^2+\|W^{n+1}_e\|^2 \\
= & (1+C\Delta t)((1-b^2)\|U^{n}_e\|^2+\|W^{n}_e\|^2) - c \|U^{n+1}_e-U^{(s)}_{e**}\|^2 - c\|W^{n+1}_e-W^{(s)}_{e**}\|^2 -  c \|\delta \vec{U}_{e**}\|^2 -  c \|\delta \vec{W}_{e**}\|^2\\
& - \Delta t (1-b^2)\int U^{n+1}_e (\vec{\tilde{b}}-\tilde{A}_s) (b\partial_x \vec{U}_{e**} + \partial_x \vec{W}_{e**})\rd{x}   + (1-b^2)\int U^{n+1}_e E^{n+1}_U\rd{x}\\
&  + \Delta t^2 (1-b^2)\int U^{n+1}_e(\vec{\tilde{b}}-\tilde{A}_s) \vec{G}_U\rd{x} -\underline{\Big[(1-b^2)\Delta t \int U^{n+1}_e\vec{\tilde{b}} \vec{F}_U\rd{x}+\Delta t  \int W^{n+1}_e \vec{\tilde{b}} \vec{F}_W\rd{x}\Big]} \\
& -\Delta t \int W^{n+1}_e (\vec{\tilde{b}}-\tilde{A}_s) ((1-b^2)\partial_x \vec{U}_{e**} - b\partial_x \vec{W}_{e**})\rd{x}     -  \underline{\int W^{n+1}_e \mu A_s (I+\mu  A)^{-1} \vec{E}_W\rd{x}} \\
& +  \int W^{n+1}_e E^{n+1}_W\rd{x}  + \Delta t^2  \int W^{n+1}_e (\vec{\tilde{b}}-\tilde{A}_s) \vec{G}_W\rd{x}  + \underline{\Delta t  \int W^{n+1}_e \mu A_s(I+\mu  A)^{-1} \tilde{A} \vec{F}_W\rd{x} }+ O(\Delta t^7).
\end{split}\end{equation}

Compared to the previous section and similar estimates for \eqref{energy_e_stst}, we need to treat the underlined terms differently (other terms only contribute  $O(\Delta t^7)$ or terms which can be absorbed). We need a technical lemma which utilizes the condition \eqref{cond_A0}.

\begin{lemma}\label{lem_A0}
\eqref{cond_A0} implies that $(A_s(I+\mu  A)^{-1})_2 = 0$ and $(\vec{\tilde{b}}(I+\mu  A)^{-1})_2 = 0$ for any $\mu>0$.
\end{lemma}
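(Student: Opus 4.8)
The plan is to remove the inverse $(I+\mu A)^{-1}$ by right-multiplication, which turns each assertion into a single scalar identity that can be read off componentwise. Both quantities in the statement have the form $(\text{row vector})\cdot(I+\mu A)^{-1}$, so I would handle them in parallel, the only inputs being the sparsity of the second column of $A$ and the scalar condition $\tilde b_2=0$ from \eqref{cond_A0}.

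First I would set $\vec{x}^\top := A_s(I+\mu A)^{-1}$ and multiply on the right by $I+\mu A$, giving the relation $\vec{x}^\top + \mu\,\vec{x}^\top A = A_s$. Reading off the second component yields $x_2 + \mu(\vec{x}^\top A)_2 = a_{s,2}$. Now I would exploit \eqref{cond_A0}: the second column of $A$ is sparse, since $a_{1,2}=0$ by lower-triangularity and $a_{i,2}=0$ for $i=3,\dots,s$ by assumption, so only the diagonal term survives and $(\vec{x}^\top A)_2 = x_2\, a_{2,2}$; moreover $a_{s,2}=0$ (here $s\ge 3$, as holds for the relevant schemes). The identity therefore collapses to $x_2(1+\mu a_{2,2})=0$.

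The conclusion $x_2=0$ then follows as soon as $1+\mu a_{2,2}\ne 0$, which is the one point requiring a word of justification: since $\mu=\Delta t/\varepsilon>0$, it is enough that $a_{2,2}\ge 0$, which holds for the implicit tableaux under consideration. The argument for $\vec{\tilde{b}}(I+\mu A)^{-1}$ is verbatim the same: setting $\vec{z}^\top := \vec{\tilde{b}}(I+\mu A)^{-1}$ gives $\vec{z}^\top + \mu\,\vec{z}^\top A = \vec{\tilde{b}}$, whose second component reduces to $z_2(1+\mu a_{2,2}) = \tilde b_2 = 0$ using the same column-sparsity of $A$ and the assumption $\tilde b_2=0$. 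Hence $z_2=0$ as well. I do not anticipate a genuine obstacle here — the entire content is the sparsity of column $2$ of $A$ together with $\tilde b_2=0$, plus the elementary nonvanishing of the scalar factor $1+\mu a_{2,2}$; the only mildly delicate point is to record why that factor is positive for every $\mu>0$.
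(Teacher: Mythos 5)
Your proof is correct, but it takes a different route from the paper's. The paper proves the lemma by establishing the sparsity of the second \emph{column of the inverse}: writing $I+\mu A = D+L$ (diagonal plus strictly lower-triangular part) and reusing the expansion $(I+\mu A)^{-1} = \bigl(I+\sum_{k\ge 1}(-1)^k(D^{-1}L)^k\bigr)D^{-1}$ already derived in Lemma~\ref{lem_mu}, it observes that \eqref{cond_A0} forces the second column of $D^{-1}L$, hence of every power $(D^{-1}L)^k$, to vanish, so $((I+\mu A)^{-1})_{i,2}=0$ for all $i\neq 2$; the conclusion then follows from $a_{s,2}=0$ and $\tilde b_2=0$. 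You instead never touch the inverse: right-multiplying by $I+\mu A$ reduces each claim to the scalar identity $x_2(1+\mu a_{22})=a_{s,2}=0$ (resp.\ $z_2(1+\mu a_{22})=\tilde b_2=0$), using the same column-sparsity of $A$. Your argument is more elementary and self-contained (it does not lean on Lemma~\ref{lem_mu}), while the paper's yields slightly more information, namely that the whole second column of $(I+\mu A)^{-1}$ equals $\frac{1}{1+\mu a_{22}}\vec e_2$. One refinement to your ``mildly delicate point'': you do not need $a_{22}\ge 0$ at all. The statement presupposes that $(I+\mu A)^{-1}$ exists, and since $I+\mu A$ is lower triangular, $\det(I+\mu A)=\prod_{i=1}^s(1+\mu a_{ii})\neq 0$ forces $1+\mu a_{22}\neq 0$ directly. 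Your implicit use of $s\ge 3$ (so that $a_{s,2}=0$ is part of \eqref{cond_A0}) is also present, silently, in the paper's proof and is harmless for the schemes under consideration.
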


\begin{proof}
We follow the notation in the proof of Lemma \ref{lem_mu}. Notice that under the assumption \eqref{cond_A0}, $D^{-1}L$ has the second column equal to zero, and thus the same is true for $(D^{-1}L)^k$. Therefore \eqref{muA0} shows that $((I+\mu  A)^{-1})_{i,2}=0,\,i=1,3,4,\dots,s$. Combined with \eqref{cond_A0}, we get the conclusion.
\end{proof}

{\bf UNDERLINED TERM 1} only gives a contribution of $O(\Delta t^7)$ due to Lemma \ref{lem_A0}. In fact, first recall the definition of $\vec{F}_U$ and $\vec{F}_W$ in \eqref{FUW}. $\vec{\tilde{b}}\partial_x \vec{E}_U$ and $\vec{\tilde{b}}(I+\mu  A)^{-1}\partial_x \vec{E}_W$ are $O(\Delta t^3)$ because the 2nd component of the coefficient vector is zero due to Lemma \ref{lem_A0}, and other components $\vec{E}_U$ or $\vec{E}_W$ are $O(\Delta t^3)$ by Lemma \ref{lem_LTE3}.

{\bf UNDERLINED TERM 2} gives a contribution of $O(\Delta t^5 \frac{\mu^2}{(1+\mu)^2})$ for the same reason, combining with the fact $\mu A_s (I+\mu  A)^{-1}=O(\frac{\mu}{1+\mu})$ from \eqref{lem_mu_2}.

{\bf UNDERLINED TERM 3} gives a contribution of $O(\Delta t^5 \frac{\mu^2}{(1+\mu)^2})$ by $\mu A_s (I+\mu  A)^{-1}=O(\frac{\mu}{1+\mu})$ and the fact that $\vec{E}_U,\vec{E}_W$ are $O(\Delta t^2)$ for all components (by Lemma \ref{lem_LTE3}).

Therefore we finally get
\begin{equation}\begin{split}
& (1-b^2)\|U_{e}^{n+1}\|^2+\|W_{e}^{n+1}\|^2 
\le  (1+C\Delta t)((1-b^2)\|U^{n}_e\|^2+\|W^{n}_e\|^2) + O\Big(\Delta t^7+\Delta t^5\frac{\mu^2}{(1+\mu)^2}\Big), \\
\end{split}\end{equation}
i.e.,
\begin{equation}\label{final_est1_st}\begin{split}
& (1-b^2)\|U_{e}^n\|^2+\|W_{e}^n\|^2 \le C(T)\Big(\Delta t^6+\Delta t^4\frac{\mu^2}{(1+\mu)^2}\Big),\quad \forall n\Delta t \le T,
\end{split}\end{equation}
which implies second order uniform accuracy, and also implies the desired third order accuracy if $\varepsilon=O(1)$. Thus we may assume $\varepsilon\le 1$ in the rest of this proof, and then $\Delta t^4\frac{\mu^2}{(1+\mu)^2}$ is always the worst term above since $\Delta t\le c$ is assumed. 

\subsection{Improve to third order}

We then improve the error estimate \eqref{final_est1_st} to uniform third order for $W$. We start by revisiting \eqref{sch_vec_stst1b}. Lemma \ref{lem_v} gives
\begin{equation}
\mu\int \vec{W}_{e**}^\top M A \vec{W}_{e**}\rd{x} \ge c\mu \|W_{e**}^{(s)}\|^2,
\end{equation}
which contributes a good term $-c\mu \|W_{e**}^{(s)}\|^2$. We estimate the terms in the first line of \eqref{sch_vec_stst1b} as
\begin{equation}\begin{split}
\Delta t\left| \int W^{(i)}_{e**}\partial_x U^{(j)}_{e**}\rd{x}\right| \le & c\frac{\mu}{1+\mu}\|W^{(i)}_{e**}\|^2 + C\Delta t^2\frac{1+\mu}{\mu}\|\partial_x U^{(j)}_{e**}\|^2 
\le  c\frac{\mu}{1+\mu} \|W^{(i)}_{e**}\|^2 + C\Delta t^6\frac{\mu}{1+\mu},\\
\end{split}\end{equation}
by using \eqref{final_est1_st} for $\|\partial_x U^{(j)}_{e**}\|^2$. The first term $c\frac{\mu}{1+\mu} \|W^{(i)}_{e**}\|^2$ can be absorbed by $-c\mu \|\vec{W}_{e**}^{(s)}\|^2$ together with good terms $-c\|\delta \vec{W}_{e**}\|^2$ in \eqref{sch_vec_stst1b}. 

The last term in \eqref{sch_vec_stst1b} can be easily controlled by $C\Delta t \|W^n_e\|^2 + C\Delta t \|W^{(i)}_{e**}-W^n_e\|^2 + O(\Delta t^7)$.  Therefore we get
\begin{equation}\label{We_energy_st}\begin{split}
&\|W_{e**}^{(s)}\|^2 \le (1+C\Delta t)\|W^{n}_e\|^2 + C\Delta t^6\frac{\mu}{1+\mu} - c  \|\delta \vec{W}_{e**}\|^2 - c\mu \|W^{(s)}_{e**}\|^2.
\end{split}\end{equation}

Adding with  \eqref{energy_e_n01ststb}, we get
\begin{equation}\label{We_energy1_st}\begin{split}
\|W_{e}^{n+1}\|^2 = & (1+C\Delta t)\|W^{n}_{e}\|^2 - c\|W_{e}^{n+1}-W^{(s)}_{e**}\|^2 - c  \|\delta \vec{W}_{e**}\|^2 - c\mu \|W^{(s)}_{e**}\|^2\\
& -\Delta t \int W^{n+1}_e (\vec{\tilde{b}}-\tilde{A}_s) ((1-b^2)\partial_x \vec{U}_{e**} - b\partial_x \vec{W}_{e**})\rd{x}   \\
& -\underline{\Delta t  \int W^{n+1}_e \vec{\tilde{b}} \vec{F}_W\rd{x}}  - \underline{ \int W^{n+1}_e \mu A_s (I+\mu  A)^{-1} \vec{E}_W\rd{x}} +  \int W^{n+1}_e E^{n+1}_W\rd{x} \\
& + \Delta t^2  \int W^{n+1}_e (\vec{\tilde{b}}-\tilde{A}_s) \vec{G}_W\rd{x}  + \underline{\Delta t  \int W^{n+1}_e \mu A_s(I+\mu  A)^{-1} \tilde{A} \vec{F}_W\rd{x}} + C\Delta t^6\frac{\mu}{1+\mu}.\\
\end{split}\end{equation}
Notice that we gain a good term $-c\frac{\mu}{1+\mu} \|W^{n+1}_{e}\|^2$ out of the good terms $- c\|W_{e}^{n+1}-W^{(s)}_{e**}\| - c\mu \|W^{(s)}_{e**}\|^2$. This helps us improve {\bf UNDERLINED TERM 2} estimate as
\begin{equation}
\left|\int  W_{e}^{n+1} \mu A_s (I+\mu A)^{-1} \vec{E}_W\rd{x}\right| \le c\frac{\mu}{1+\mu} \|W^{n+1}_{e}\|^2 + C\frac{1+\mu}{\mu}\Delta t^6\cdot \frac{\mu^2}{(1+\mu)^2},
\end{equation}
and similarly for {\bf UNDERLINED TERM 3}. By estimating other terms as in the previous subsection (which gives $O(\Delta t^7)$), we get
\begin{equation}\begin{split}
& \|W_{e}^{n+1}\|^2 \le (1+C\Delta t)\|W^{n}_{e}\|^2 + \frac{\mu}{1+\mu}(C\Delta t^6 - c \|W^{n+1}_{e}\|^2).
\end{split}\end{equation}
Then a bootstrap argument gives
\begin{equation}\label{final_est2_st}\begin{split}
\|W_{e}^{n}\|^2 \le C(T)\Delta t^6,
\end{split}\end{equation}
i.e., uniform third order accuracy of $W$. 


Finally we improve the error estimate \eqref{final_est1_st} to uniform third order for $U$. In \eqref{sch_vec_stst1a}, now we may estimate as
\begin{equation}
\left|\Delta t\int U^{(i)}_{e**}\partial_x W^{(j)}_{e**}\right| \le C\Delta t \|U^{(i)}_{e**}\|^2 + C\Delta t \|\partial_x W^{(j)}_{e**}\|^2\le C\Delta t \|U^n_e\|^2 +C\Delta t \|U^{(i)}_{e**}-U^n_e\|^2 + C\Delta t^7,
\end{equation}
using \eqref{final_est2_st} for $\partial_x W^{(j)}_{e**}$. By estimating other terms in the same way as before, we obtain
\begin{equation}\begin{split}
& \|U_{e**}^{(s)}\|^2 \le (1+C\Delta t)\|U^{n}_e\|^2 -  c \|\delta \vec{U}_{e**}\|^2 + C\Delta t^7.  \\
\end{split}\end{equation}
The same treatment can be applied to the term $\Delta t\int U^{n+1}_{e}\partial_x W^{(j)}_{e**}\rd{x}$ in \eqref{energy_e_n01ststa}. By estimating other terms in the same way as before and adding it with the previous estimate, 
we obtain
\begin{equation}\begin{split}
& \|U_{e}^{n+1}\|^2 \le (1+C\Delta t)\|U^{n}_e\|^2  + C\Delta t^7.  \\
\end{split}\end{equation}
This gives the uniform third order accuracy of $U$ by the Gronwall inequality, and finishes the proof of Theorem \ref{thm_3rd}.

\section{Numerical verification}
\label{sec:num}

In this section we numerically verify the accuracy of some IMEX-RK schemes applied to the linear hyperbolic relaxation system (\ref{eq2}), where we assume $b=0.6$, $x\in[0,2\pi]$ with periodic boundary condition, and consider various values of $\eps$ from $1$ to $10^{-7}$.

We apply the Fourier-Galerkin spectral method in $x$ with a fixed $N=40$. The initial condition is taken as
\begin{equation}
U_N^0= \mathcal{P}_N e^{\sin(x)},\quad V_N^0= \mathcal{P}_N be^{\sin(x)}.
\end{equation}
To avoid the initial layer, we calculate the exact solution at $T_0=1$ and use it as the initial data for the IMEX-RK schemes. 

We consider the three IMEX-RK schemes listed in Appendix~\ref{app:IMEX-RK}: ARS(2,2,2), ARS(4,4,3), BHR(5,5,3)* with $\Delta t = \frac{8}{N^2}\cdot 2^{-k},\,k=1,2,3,4,5$ and final time $T=2$. The numerical error is computed as the left hand side of (\ref{error}). The results are shown in Figure~\ref{fig1}. One can observe that ARS(2,2,2) and BHR(5,5,3)* achieve uniform second/third order accuracy respectively, verifying the conclusions of Theorems \ref{thm_2nd} and \ref{thm_3rd}. On the other hand, ARS(4,4,3), although being third order when $\eps=O(1)$ and $\eps\rightarrow 0$, suffers from order reduction in the intermediate regime. Indeed, it achieves uniform second order accuracy, which also agrees with our analysis since ARS(4,4,3) only satisfies the conditions in Theorem \ref{thm_2nd} but not in Theorem \ref{thm_3rd}.

\begin{figure}[htp!]
\begin{center}
	\includegraphics[width=0.49\textwidth]{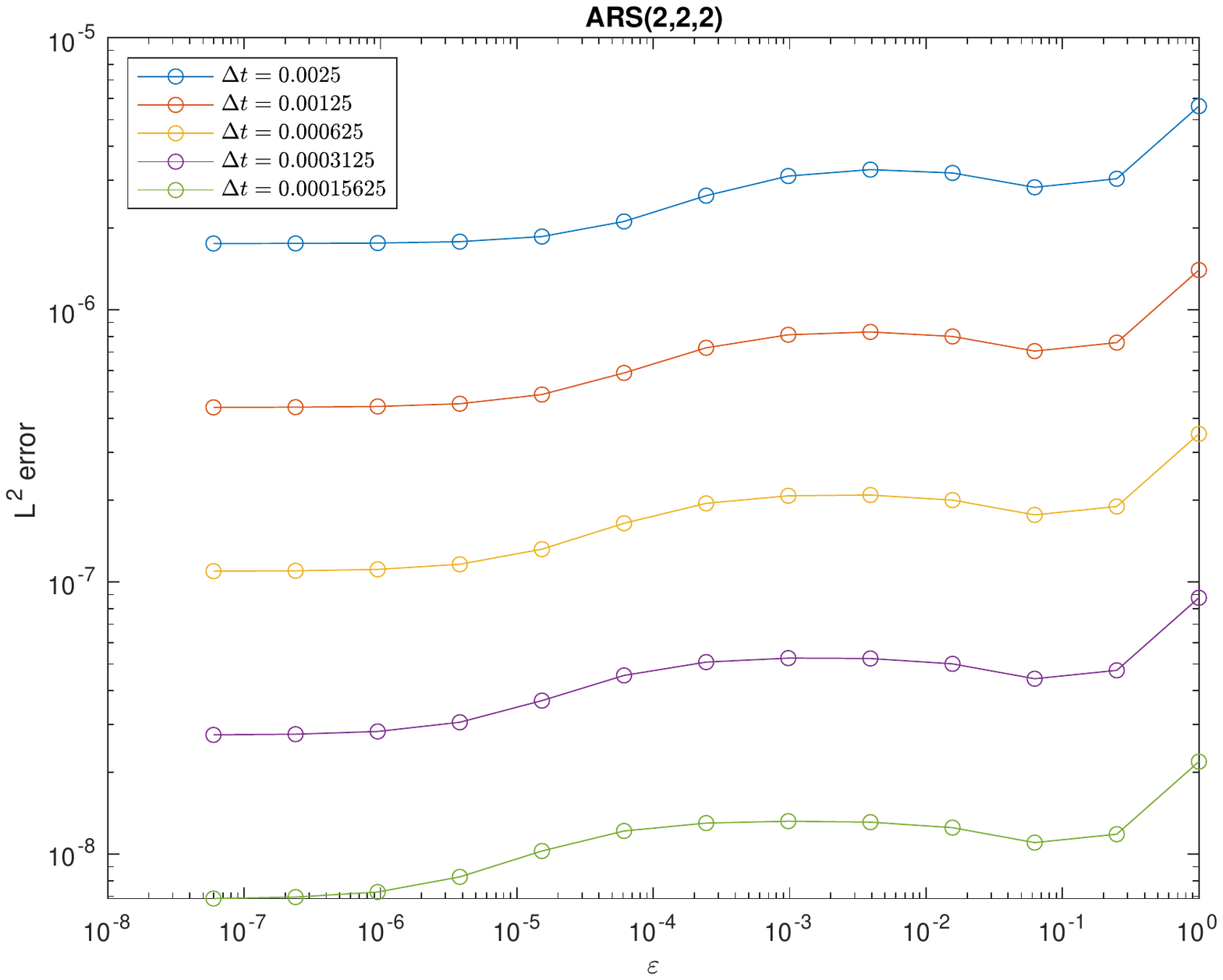}
	\includegraphics[width=0.49\textwidth]{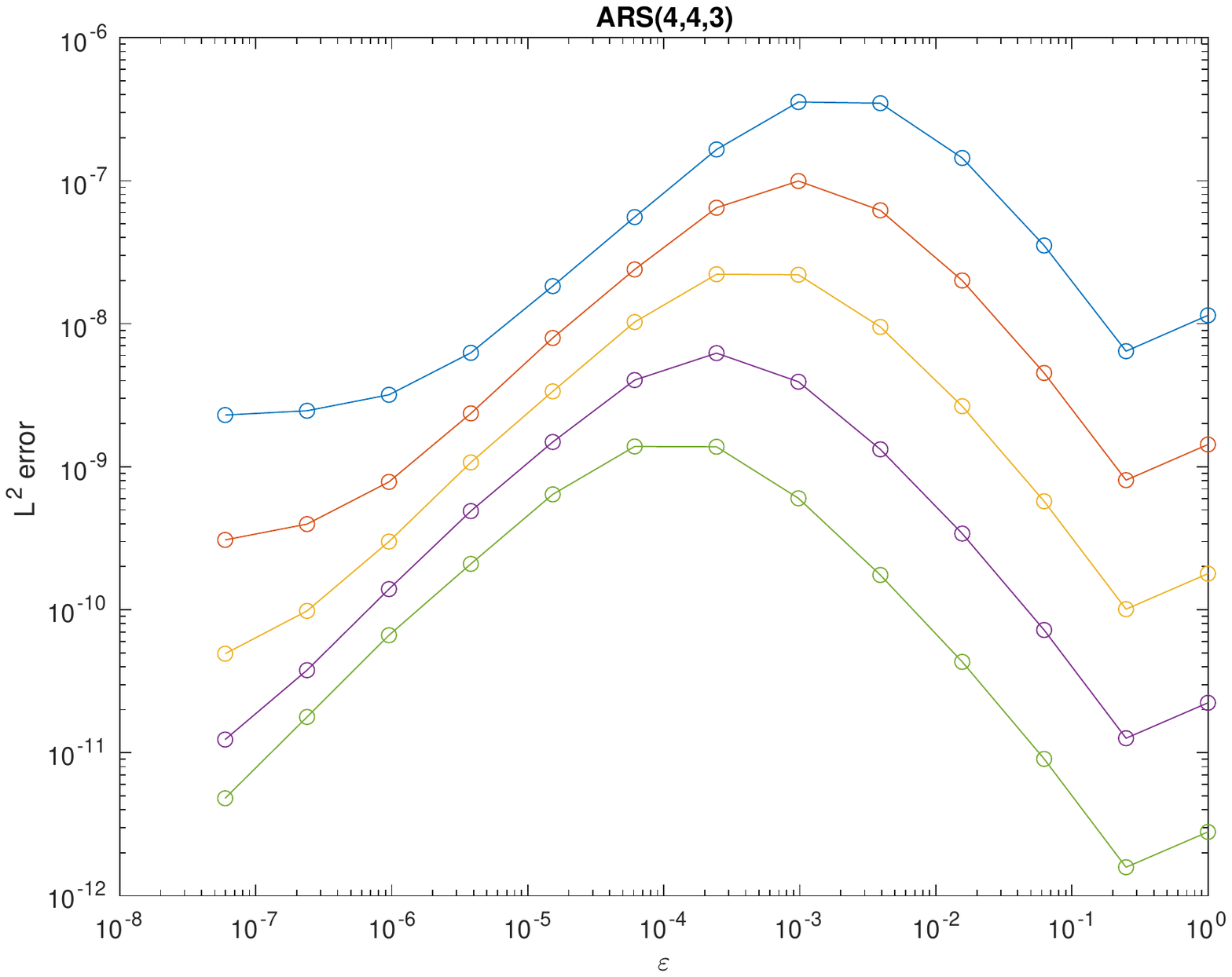}
	\includegraphics[width=0.49\textwidth]{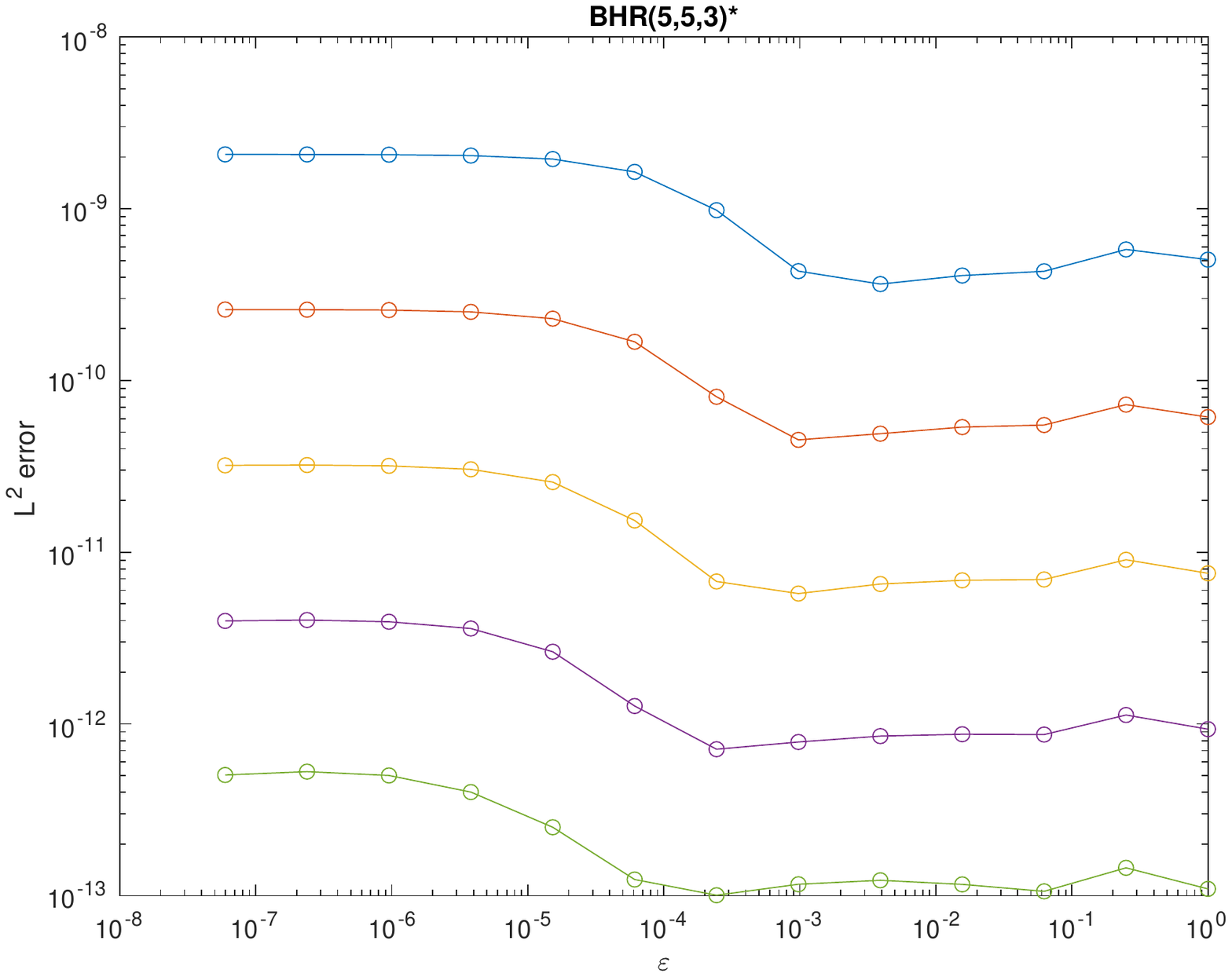}
	\includegraphics[width=0.49\textwidth]{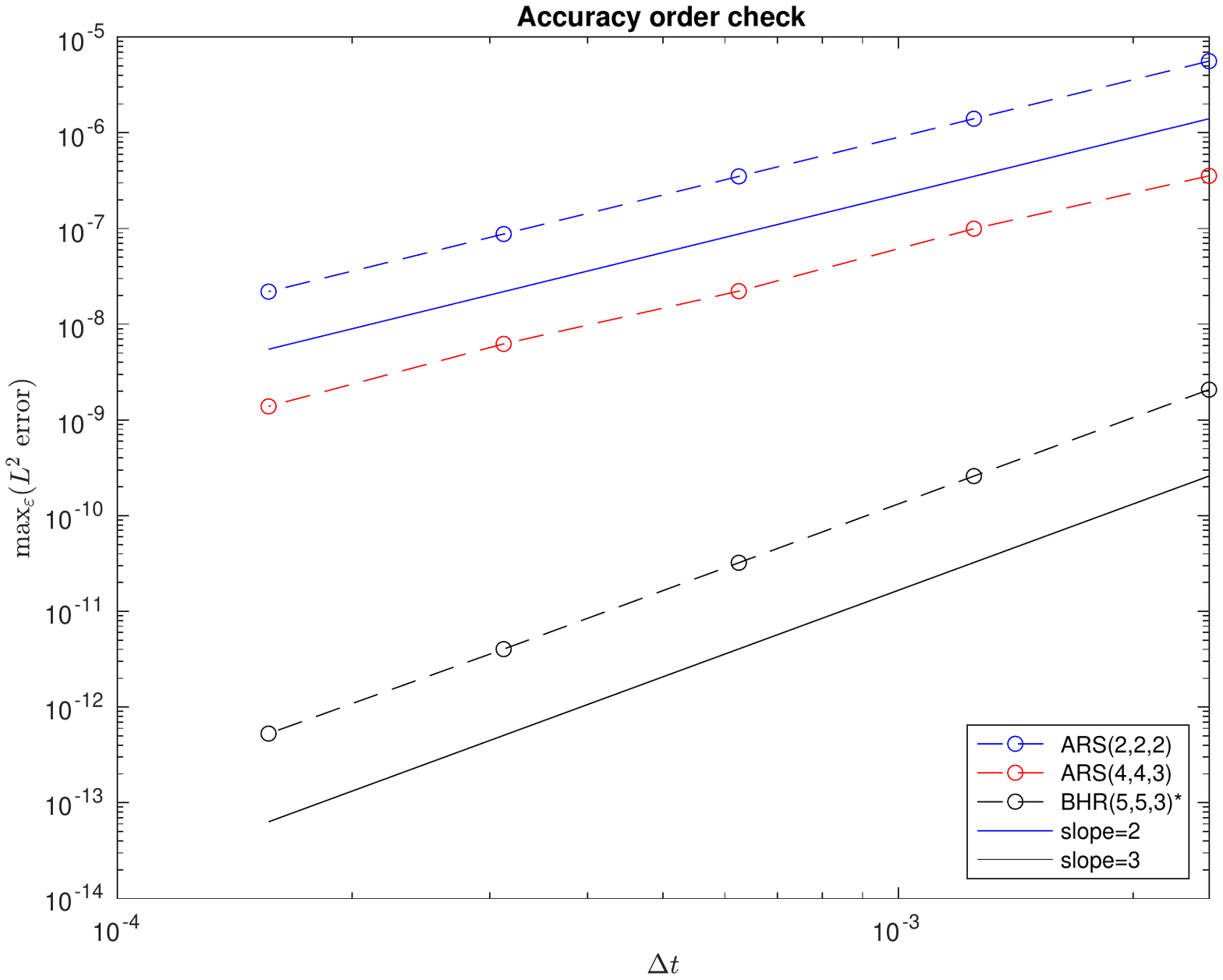}
	\caption{$L^2$ error of the numerical solution to linear hyperbolic relaxation system (\ref{eq2}) computed by IMEX-RK schemes. Top left: ARS(2,2,2); top right: ARS(4,4,3); bottom left: BHR(5,5,3)*. In each of these figures, horizontal axis is $\eps$ ranging from $10^{-7}$ to $1$, and different curves represent different values of $\Delta t$, as shown in the top left figure. Bottom right figure is obtained as follows: for each scheme, take the maximal $L^2$ error among all values of $\eps$ for a fixed $\Delta t$.}
	\label{fig1}
\end{center}	
\end{figure}

\section{Appendix}

\subsection{Necessary conditions for the matrix $M$}
\label{app:M}

To find a matrix $M$ satisfying {\bf (M1)} and {\bf (M2)}, we start from the following observations.
\begin{proposition}\label{prop_posdef}
For any $s\times s$ matrix $M$, we have
\begin{itemize}
\item The matrix $MA + (MA)^\top$ cannot be strictly positive-definite. Furthermore, 
\begin{itemize}
\item {\bf (M1)} implies $\vec{v}^\top M = (*,0,\cdots,0)$, where $\vec{v}$ is a generator of the one-dimensional null space of $A$. Here $*$ denotes an arbitrary real number.
\item If $\vec{v}^\top M = (*,0,\cdots,0)$, then $0$ is an eigenvalue of $MA + (MA)^\top$.
\end{itemize}
\item The matrix $M_* + M_*^\top$ cannot be strictly positive-definite. Furthermore, 
\begin{itemize}
\item {\bf (M2)} implies $\vec{e}^\top M = (*,0,\cdots,0,2)$. 
\item If $\vec{e}^\top M = (*,0,\cdots,0,2)$, then $0$ is an eigenvalue of $M_* + M_*^\top$.
\end{itemize}
\end{itemize}
\end{proposition}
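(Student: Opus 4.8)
The plan is to handle the two bullets in parallel, since each rests on the same three-move template: first exhibit a nonzero vector on which the relevant symmetrized matrix has vanishing quadratic form, which instantly rules out strict positive-definiteness; then upgrade this to a kernel statement via Lemma~\ref{lem_P} once the rank-$(s-1)$ hypothesis {\bf (M1)} or {\bf (M2)} is in force; and finally translate the kernel condition into the stated row structure of $M$, checking the converse by direct substitution. The only results needed are the singularity structure of $A$ and $M_*$ together with Lemma~\ref{lem_P}.

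For the first bullet I would use the right null vector $\vec{v}$ of $A$. Because the scheme is of type CK, the entire first row of $A$ vanishes, so $A$ has rank $s-1$, with left null space spanned by $(1,0,\dots,0)^\top$ and one-dimensional right null space spanned by $\vec{v}$. From $A\vec{v}=\vec{0}$ we get $MA\vec{v}=\vec{0}$ and hence $\vec{v}^\top(MA+(MA)^\top)\vec{v}=0$, so this symmetric matrix cannot be strictly positive-definite. Assuming {\bf (M1)}, Lemma~\ref{lem_P} with $P=MA+(MA)^\top$ and $\vec{u}=\vec{v}$ gives $(MA+(MA)^\top)\vec{v}=\vec{0}$; subtracting the known $MA\vec{v}=\vec{0}$ leaves $A^\top M^\top\vec{v}=\vec{0}$, so $M^\top\vec{v}$ lies in $\ker A^\top=\text{Span}\{(1,0,\dots,0)^\top\}$, which is exactly $\vec{v}^\top M=(*,0,\dots,0)$. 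For the converse, if $\vec{v}^\top M=(*,0,\dots,0)$ then $A^\top M^\top\vec{v}=*\,A^\top(1,0,\dots,0)^\top=\vec{0}$ because the first row of $A$ is zero, so $\vec{v}$ is killed by the symmetric matrix and $0$ is an eigenvalue.

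For the second bullet I would use the all-ones vector $\vec{e}$. The key preliminary computation is to write $M_*=MB+C$ with $B$ and $C$ the two explicit matrices in its definition, and note $B\vec{e}=\vec{0}$ while $C\vec{e}=(1,0,\dots,0,-1)^\top$; hence $M_*\vec{e}=(1,0,\dots,0,-1)^\top$ for every $M$, and $\vec{e}^\top(M_*+M_*^\top)\vec{e}=2\vec{e}^\top M_*\vec{e}=0$, which recovers the identity cited below Lemma~\ref{lem_v1} and rules out strict positive-definiteness. Under {\bf (M2)}, Lemma~\ref{lem_P} gives $(M_*+M_*^\top)\vec{e}=\vec{0}$, so $M_*^\top\vec{e}=-M_*\vec{e}=(-1,0,\dots,0,1)^\top$. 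Expanding $M_*^\top\vec{e}=B^\top M^\top\vec{e}+C\vec{e}$ and using the explicit action of $B^\top$ turns this into a small system for the entries of $M^\top\vec{e}$ whose solution forces the middle entries to vanish and the last entry to equal $2$, leaving the first entry free; that is precisely $\vec{e}^\top M=(*,0,\dots,0,2)$. The converse again follows by substituting this form back, computing $B^\top M^\top\vec{e}=(-2,0,\dots,0,2)^\top$, and verifying $(M_*+M_*^\top)\vec{e}=\vec{0}$.

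I expect the main obstacle to be purely the bookkeeping in the second bullet: correctly factoring $M_*=MB+C$, computing the action of $B^\top$, and reading off the precise coefficients (zeros in the middle, $2$ in the last slot, free in the first) from the resulting linear system. None of this is deep, but it is the only place where the explicit structure of $M_*$ is genuinely used, and sign or index slips are easy to make. Everything else reduces to the elementary fact that a singular matrix makes its symmetrized product with $M$ degenerate, combined with the promotion from \emph{quadratic form vanishes} to \emph{vector lies in the kernel} furnished by Lemma~\ref{lem_P}, whose hypotheses hold precisely because {\bf (M1)} and {\bf (M2)} guarantee rank $s-1$.
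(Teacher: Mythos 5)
Your proposal is correct and follows essentially the same route as the paper's proof: the vanishing quadratic forms $\vec{v}^\top(MA+(MA)^\top)\vec{v}=0$ and $\vec{e}^\top(M_*+M_*^\top)\vec{e}=0$ rule out strict positive-definiteness, Lemma~\ref{lem_P} upgrades these to kernel statements under {\bf (M1)}/{\bf (M2)}, and the row conditions on $M$ follow from $\ker A^\top=\mathrm{Span}\{(1,0,\dots,0)^\top\}$ and the explicit structure of $M_*$, with the converses checked by direct substitution. Your factorization $M_*=MB+C$ and the computation with $B^\top$ is exactly the calculation the paper performs (and partially leaves to the reader as ``easily deduces''), just written in slightly more explicit notation.
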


This means one has to construct $M$ very carefully, by requiring the necessary conditions above within the construction. This proposition also allows one to verify {\bf (M1)} and {\bf (M2)} numerically. If $\vec{v}^\top M = (*,0,\cdots,0)$ is satisfied and the eigenvalues of $MA + (MA)^\top$, calculated numerically, has one close to zero and all others positive and away from zero, then this gives a rigorous justification of {\bf (M1)}. The same reasoning works for {\bf (M2)}.

\begin{proof}
We first treat the matrix $MA + (MA)^\top$. As noticed in the proof of Lemma \ref{lem_v}, we have $\vec{v}^\top (MA+(MA)^\top) \vec{v}=0$, which implies that $MA + (MA)^\top$ cannot be strictly positive-definite.

If we assume {\bf (M1)}, then we may apply Lemma \ref{lem_P} with $P=MA+(MA)^\top$ to see that $(MA + (MA)^\top) \vec{v} = \vec{0}$. Therefore
\begin{equation*}
 \vec{0} = (MA + (MA)^\top) \vec{v} = A^\top M^\top \vec{v}.
\end{equation*}
This implies that $M^\top \vec{v}$ is in the null space $A^\top$, which is spanned by $(1,0,\cdots,0)^\top$. Therefore $\vec{v}^\top M = (*,0,\cdots,0)$.

If we assume $\vec{v}^\top M = (*,0,\cdots,0)$, then $M^\top \vec{v}$ is in the null space $A^\top$, and thus
\begin{equation*}
(MA + (MA)^\top) \vec{v} = A^\top M^\top \vec{v} = \vec{0}.
\end{equation*}
Therefore $0$ is an eigenvalue of $MA + (MA)^\top$.

Then we treat the matrix $M_* + M_*^\top$ in a similar way. First, since $\vec{e}^\top (M_* + M_*^\top) \vec{e}=0$, we see that $M_* + M_*^\top$ cannot be strictly positive-definite. Similarly as before, {\bf (M2)} implies $(M_* + M_*^\top) \vec{e}=\vec{0}$, from which one easily deduces $\vec{e}^\top M = (*,0,\cdots,0,2)$. Next,  if $\vec{e}^\top M = (*,0,\cdots,0,2)$, then
\begin{equation*}
(M_* + M_*^\top) \vec{e} = (2,0,\cdots,0,-2)^\top + \begin{pmatrix}
0 & -1 & -1 & \cdots & -1 \\
0 & 1 & 0 & \cdots & 0 \\
0 & 0 & 1 & \cdots & 0 \\
\cdots & \cdots & \cdots & \cdots & \cdots \\
0 & 0 & 0 & \cdots & 1 \\
\end{pmatrix}M^\top \vec{e}=\vec{0}.
\end{equation*}
Therefore $0$ is an eigenvalue of $M_* + M_*^\top$.
\end{proof}


\subsection{Some IMEX-RK schemes and their $M$ matrices}
\label{app:IMEX-RK}

We give some examples of type CK IMEX-RK schemes (by their double Butcher tableau) and their corresponding $M$ matrices satisfying {\bf (M1)} and {\bf (M2)}.

ARS(2,2,2) \cite{ARS97}:
\begin{center}
\begin{tabular}{l|lll}
0 & 0 & 0 & 0 \\
$\gamma$ & $\gamma$ & 0 & 0 \\
1 & $\delta$ & $1-\delta$ & 0 \\
\hline
& $\delta$ & $1-\delta$ & 0
\end{tabular}
\hspace{2cm}
\begin{tabular}{l|lll}
0 & 0 & 0 & 0 \\
$\gamma$ & 0 & $\gamma$  & 0 \\
1 & 0 & $1-\gamma$ & $\gamma$ \\
\hline
&0 & $1-\gamma$ & $\gamma$
\end{tabular}
\end{center}
where $\gamma = 1-\frac{\sqrt{2}}{2}\approx 0.292893218813452$, $\delta = 1-\frac{1}{2\gamma} \approx -0.707106781186548$. The matrix $M$ and vector $\vec{v}$ are given by
\begin{equation*}
M=\begin{pmatrix}
0 & 0 & 0 \\
0 & 2 & 0 \\
0 & -2 & 2 \\
\end{pmatrix}, \quad
\vec{v}=\begin{pmatrix}
1\\
0\\
0
\end{pmatrix}.
\end{equation*}

ARS(4,4,3) \cite{ARS97}:
\begin{center}
\begin{tabular}{l|lllll}
0 & 0 & 0 & 0 & 0 & 0\\
$1/2$ &    $1/2$ & 0 & 0 & 0 & 0\\
$2/3$ &    $11/18$ & $1/18$ & 0 & 0 & 0\\
$1/2$ &    $5/6$ & $-5/6$ & $1/2$ & 0 & 0\\
$1$ &    $1/4$ & $7/4$ & $3/4$ & $-7/4$ & 0\\
\hline
& $1/4$ & $7/4$ & $3/4$ & $-7/4$ & 0\\
    \end{tabular}
\hspace{2cm}
\begin{tabular}{l|lllll}
0 & 0 & 0 & 0 & 0 & 0\\
$1/2$ &     0 & $1/2$ & 0 & 0 & 0\\
$2/3$ &     0 & $1/6$ & $1/2$ & 0 & 0\\
$1/2$ &     0 & $-1/2$ & $1/2$ & $1/2$ & 0\\
$1$ &    0 & $3/2$ & $-3/2$ & $1/2$ & $1/2$\\
    \hline
&0 & $3/2$ & $-3/2$ & $1/2$ & $1/2$\\
\end{tabular}
\end{center}
The matrix $M$ and vector $\vec{v}$ are given by
\begin{equation*}
M=\begin{pmatrix}
0   &  0  &   0 &    0  &   0\\
     0 &   20  & -18  &   0  &   0\\
     0 &  -20  &  20  &   0  &   0\\
     0  &   0 &   -2  &   2   &  0\\
     0  &   0  &   0 &   -2  &   2\\
     \end{pmatrix},
\quad
\vec{v}=\begin{pmatrix}
1\\
0\\
0\\
0\\
0
\end{pmatrix}.     
\end{equation*}

BHR(5,5,3)* (a variant of the BHR(5,5,3) scheme in \cite{BR09}):
\begin{equation*}
\begin{tabular}{c|ccccc}
$0$ & 0  & 0 & 0 & 0 & 0   \\
$2\gamma $ & $2\gamma$  & 0  &  0 & 0 & 0  \\
$2\gamma$ & $\gamma$ & $\gamma$  & 0 & 0 & 0  \\
$c_4$  & $c_4-\frac{c_4^2}{4\gamma}$ & 0  & $\frac{c_4^2}{4\gamma}$ & 0 & 0 \\
$1$ & $1+b_3-\tilde{a}_{53}-\tilde{a}_{54}$  & $-b_3$  & $\tilde{a}_{53}$ & $\tilde{a}_{54}$ & 0  \\
\hline
 & $1-b_3-b_4-\gamma$ &  0  & $b_3$ & $b_4$ &  $\gamma$  
\end{tabular}
\quad
\begin{tabular}{c|ccccc}
$0$ & 0  & 0 & 0 & 0 & 0   \\
$2\gamma$ & $\gamma$  & $\gamma$  &  0 & 0 & 0  \\
$2\gamma$ & $\gamma$ & 0  & $\gamma$ & 0 & 0  \\
$c_4$  & $\frac{3c_4}{2}-\frac{c_4^2}{4\gamma}-\gamma$ & 0  & $\frac{c_4^2}{4\gamma}-\frac{c_4}{2}$ & $\gamma$ & 0 \\
$1$ & $1-b_3-b_4-\gamma$  & 0  & $b_3$ & $b_4$ & $\gamma$  \\
\hline
 & $1-b_3-b_4-\gamma$  & 0  & $b_3$ & $b_4$ & $\gamma$  
\end{tabular}
\end{equation*}
where $\gamma\approx 0.435866521508460$ is the middle root of the polynomial $6\gamma^3-18\gamma^2+9\gamma-1$. $c_4$ is a free parameter. The coefficients $b_3$, $b_4$ are determined by
\begin{equation*}
\begin{pmatrix}
2\gamma & c_4 \\
4\gamma^2 & c_4^2 \\
\end{pmatrix}
\begin{pmatrix}
b_3 \\
b_4 \\
\end{pmatrix}=
\begin{pmatrix}
1/2-\gamma \\
1/3-\gamma \\
\end{pmatrix},
\end{equation*}
and $\tilde{a}_{53},\tilde{a}_{54}$ are determined by
\begin{equation*}
\begin{pmatrix}
2\gamma & c_4 \\
4\gamma^2 & c_4^2 \\
\end{pmatrix}
\begin{pmatrix}
\tilde{a}_{53} \\
\tilde{a}_{54} \\
\end{pmatrix}=
\begin{pmatrix}
1/2+2b_3\gamma \\
1/(12\gamma)-b_4c_4^2 \\
\end{pmatrix}.
\end{equation*}

In particular, if we choose $c_4=1.5$, the above tableau reads (here the last digit may be inaccurate due to round-off error):
{\tiny
\begin{center}
\begin{tabular}{l|lllll}
0&0&0&0&0&0\\
0.871733043016919&0.871733043016919&0&0&0&0\\
0.871733043016919&0.435866521508460&0.435866521508460&0&0&0\\
1.5&0.209467297343041&0&1.290532702656959&0&0\\
1&0.317724380220406&-0.362863385578740&1.195970114894582&-0.150831109536248&0\\
\hline
&0.369394442791758&0&0.362863385578740&-0.168124349878957&0.435866521508460\\
\end{tabular}
\begin{tabular}{l|lllll}
0&0&0&0&0&0\\
0.871733043016919&0.435866521508460&0.435866521508460&0&0&0\\
0.871733043016919&0.435866521508460&0&0.435866521508460&0&0\\
1.5&0.523600775834581&0&0.540532702656959&0.435866521508460&0\\
1&0.369394442791758&0&0.362863385578740&-0.168124349878957&0.435866521508460\\
\hline
&0.369394442791758&0&0.362863385578740&-0.168124349878957&0.435866521508460\\
\end{tabular}
\end{center}}
\noindent The matrix $M$ and vector $\vec{v}$ are given below. It is verified numerically that $MA + (MA)^\top$ and $M_*+M_*^\top$ are semi-positive-definite, and their second smallest eigenvalue is greater than 0.01.
\begin{equation*}
M=\begin{pmatrix}
0&0.043575411705898&0.114355000407169&-0.063096606048326&0.420443444804810\\
0&0.124868150581963&-0.211334272741718&0.127296242100737&0.269577631500343\\
0&-0.076633818924610&0.322397165551093&-0.179513569782584&0.153723853317174\\
0&0.119931254991190&-0.084746378964125&0.280057507208928&0.073572486565781\\
0&-0.211740998354441&-0.140671514252420&-0.164743573478755&1.082682583811894\\
\end{pmatrix}.
\end{equation*}
\begin{equation*}
\vec{v}=\begin{pmatrix}
1\\
 -1\\
  -1\\
   0.038846587170263\\
 0
\end{pmatrix},
\end{equation*}

\begin{remark}
Note that the BHR(5,5,3)* scheme given above is similar to the first scheme in Appendix 2 of \cite{BR09} but different in the value of $c_4$. In fact, $c_4$ in \cite{BR09} is taken approximately as $2.3402$ in order to minimize the fourth-order error constant. However, we are not able to find a matrix $M$ for this scheme. Instead, we take a different value $c_4=1.5$ as above, for which we can find a matrix $M$ satisfying {\bf (M1)} and {\bf (M2)}.
\end{remark}

\subsection{Formal asymptotic-preserving (AP) property of IMEX-RK schemes}

We devote the final section to a formal AP property of the IMEX-RK schemes considered in this paper.  The purpose is to clarify the difference on AP and uniform accuracy. 

Since the proof is formal, we are able to consider a nonlinear hyperbolic relaxation system (c.f., \eqref{eq2}):
\begin{equation} \label{nhr}
\left\{\begin{split}
& \partial_t u + \partial_x v = 0, \\
& \partial_t v + \partial_x u = \frac{1}{\eps}(f(u)-v),
\end{split}\right.\end{equation}
where $f(u)$ is a smooth, nonlinear function of $u$ with $|f'(u)|<1$. Applying the IMEX-RK scheme to (\ref{nhr}) as in \eqref{sch} yields
\begin{equation}\label{sch_f}\begin{split}
& U^{(i)} = U^n -\Delta t \sum_{j=1}^{i-1} \tilde{a}_{i j} \partial_x V^{(j)}, \quad i=1,\dots,s, \\
& V^{(i)} = V^n -\Delta t \sum_{j=1}^{i-1}\tilde{a}_{i j} \partial_x U^{(j)} +\frac{\Delta t}{\varepsilon}\sum_{j=1}^i a_{i j}(f(U^{(j)})-V^{(j)}), \quad i=1,\dots,s,  \\
& U^{n+1} = U^n -\Delta t \sum_{j=1}^{s} \tilde{b}_j \partial_x V^{(j)},\\
& V^{n+1} = V^n -\Delta t \sum_{j=1}^{s}\tilde{b}_j \partial_x U^{(j)} + \frac{\Delta t}{\varepsilon}\sum_{j=1}^s b_j (f(U^{(j)})-V^{(j)}). \\
\end{split}\end{equation}

We first give the definition of an AP scheme for (\ref{nhr}).
\begin{definition}
The scheme (\ref{sch_f}) is AP means: (\ref{sch_f}) is a $q$-th order accurate scheme for (\ref{nhr}) when $\varepsilon=O(1)$; and when $\varepsilon\rightarrow 0$ and keeping $\Delta t$ fixed, (\ref{sch_f}) reduces to a $q$-th order consistent discretization to the limiting equation $\partial_t u +\partial_x f(u)=0$ of (\ref{nhr}).
\end{definition}
Note that our definition here is stronger than the classical AP schemes \cite{Jin_Rev}: the latter only requires that the limiting scheme is a consistent discretization to the limiting equation. Our definition of AP is also known as asymptotically accurate (AA) in the terminology of \cite{DP13}.

We have the following theorem.
\begin{theorem}\label{thm_APnl}
Consider a $q$-th order IMEX-RK scheme (\ref{sch_f}) of type CK and ISA, subject to consistent initial condition: $v_{in}-f(u_{in})=O(\varepsilon)$. Assume $U^n$, $U^{(i)}$, $V^n$, $V^{(i)}$ and their derivatives are $O(1)$ for all $n$ and $i$. When $\varepsilon\rightarrow 0$ and keeping $\Delta t$ fixed, we have
\begin{itemize}
\item If the scheme is GSA, then $V^n=f(U^n),\,V^{(i)}=f(U^{(i)})$ for all $n$ and $i$. Substituting these into the first and third lines of (\ref{sch_f}), we obtain a $q$-th order (explicit) RK scheme applied to the limiting equation $\partial_t u +\partial_x f(u)=0$. Hence the scheme is AP.
\item If the scheme satisfies condition {\bf (A)} or equivalently $\vec{e}_s^\top \hat{A}^{-1}\vec{a}=0$ (c.f., Remark~\ref{eAa}), then $V^n=f(U^n)+O(\Delta t)$, $V^{(i)}=f(U^{(i)})+O(\Delta t)$. Substituting these into the first and third lines of (\ref{sch_f}), we obtain a $q$-th order (explicit) RK scheme applied to the limiting equation $\partial_t u +\partial_x f(u)=0$ with extra error terms of $(\Delta t^2)$. This means the limiting scheme is consistent and at least first order accurate.
\begin{itemize}
\item If in addition $\sum_{j=1}^s(\tilde{b}_j-\tilde{a}_{sj})=0$, then $V^n=f(U^n)+O(\Delta t^2)$, $V^{(i)}=f(U^{(i)})+O(\Delta t^2)$. Using the same argument as above, this means if the original scheme is second order then the limiting scheme also maintains second order, hence it is AP.
\item If in addition $\sum_{j=1}^{s}(\tilde{b}_j-\tilde{a}_{s j})\tilde{c}_j=0$, then $V^n=f(U^n)+O(\Delta t^3)$, $V^{(i)}=f(U^{(i)})+O(\Delta t^3)$. Using the same argument as above, this means if the original scheme is third order then the limiting scheme also maintains third order, hence it is AP.
\end{itemize}
\end{itemize}
\end{theorem}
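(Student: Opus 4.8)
The plan is to track the deviation from local equilibrium at each stage, $R^{(i)}:=f(U^{(i)})-V^{(i)}$, together with $R^n:=f(U^n)-V^n$, and to show that $R^n=O(\Delta t^p)$ with $p$ dictated by the structural hypotheses. The single most useful tool is an exact algebraic identity coming from ISA ($a_{sj}=b_j$): subtracting the last stage equation for $V$ in (\ref{sch_f}) from the $V^{n+1}$ equation, the stiff relaxation contributions cancel identically, leaving $V^{n+1}=V^{(s)}-\Delta t\sum_j(\tilde b_j-\tilde a_{sj})\partial_x U^{(j)}$, and likewise $U^{n+1}=U^{(s)}-\Delta t\sum_j(\tilde b_j-\tilde a_{sj})\partial_x V^{(j)}$. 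These identities remove the dangerous $\frac{\Delta t}{\varepsilon}\sum_j b_j(f(U^{(j)})-V^{(j)})$ term (an indeterminate $\infty\cdot 0$ in the limit) from the update, and this cancellation is precisely where the ISA hypothesis is spent.

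First I would extract the leading-order balance of the stage deviations. Writing the stage-$V$ equations in vector form and multiplying by $\varepsilon/\Delta t$, the explicit and time-update terms acquire a factor $\varepsilon/\Delta t\to0$ while the relaxation term survives, forcing $A\vec R\to\vec 0$ for $\vec R=(R^{(1)},\dots,R^{(s)})^\top$ (here the boundedness assumption on $U^{(i)},V^{(i)}$ is used). Since $A$ has one-dimensional null space $\mathrm{span}\{\vec v\}$ with $v_1=1$, and the type-CK structure gives $R^{(1)}=f(U^n)-V^n=R^n$ exactly, the limit is $R^{(i)}=R^n v_i$. This is the branch point: under GSA I will close an induction forcing $R^n\equiv0$, whereas under condition {\bf (A)} the hypothesis $v_s=0$ gives $R^{(s)}=R^n v_s=0$ regardless of $R^n$, severing the feedback of the old deviation into the new one.

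Next I would run the recurrence $R^{n+1}=f(U^{n+1})-V^{n+1}=\big(f(U^{n+1})-f(U^{(s)})\big)+R^{(s)}+\Delta t\sum_j(\tilde b_j-\tilde a_{sj})\partial_x U^{(j)}$, obtained from the ISA identity above. In the GSA case every summand vanishes: $\tilde b_j=\tilde a_{sj}$ kills the last term and forces $U^{n+1}=U^{(s)}$, and $R^{(s)}=0$, so $R^{n+1}=0$; with the consistent datum $R^0=0$ this propagates to $R^n\equiv0$, i.e.\ $V^n=f(U^n)$ and $V^{(i)}=f(U^{(i)})$ exactly. Substituting these equilibria into the $U$-lines of (\ref{sch_f}) gives precisely the explicit RK scheme $(\tilde A,\tilde b)$ for $\partial_t u+\partial_x f(u)=0$, which is $q$-th order because the original scheme is, establishing the AP claim. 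Under condition {\bf (A)} one instead has $R^{(s)}=0$, so $R^{n+1}$ is built only from non-stiff remainders, which I would control by Taylor expanding $f$ and using $U^{n+1}-U^{(s)}=-\Delta t\sum_j(\tilde b_j-\tilde a_{sj})\partial_x V^{(j)}=O(\Delta t)$ together with the stage expansion $U^{(j)}-U^n=-\Delta t\,\tilde c_j\partial_x V^n+O(\Delta t^2)$.

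The order bookkeeping is then a successive-moment argument, and this is where the remaining conditions enter. With only {\bf (A)}, both $f(U^{n+1})-f(U^{(s)})$ and $\Delta t\sum_j(\tilde b_j-\tilde a_{sj})\partial_x U^{(j)}$ are $O(\Delta t)$, giving $R^n=O(\Delta t)$; imposing $\sum_j(\tilde b_j-\tilde a_{sj})=0$ (automatic here since $\tilde c_s=1$ and $\sum_j\tilde b_j=1$) makes the $U^n$-part of the sum cancel so only $U^{(j)}-U^n=O(\Delta t)$ survives, upgrading both terms to $O(\Delta t^2)$; imposing in addition $\sum_j(\tilde b_j-\tilde a_{sj})\tilde c_j=0$ cancels the next term in the stage expansion and yields $O(\Delta t^3)$. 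Finally I would feed $R^n=O(\Delta t^p)$, equivalently $V^{(j)}=f(U^{(j)})+O(\Delta t^p)$, back into the $U$-update: this perturbs the explicit RK scheme for $\partial_t u+\partial_x f(u)=0$ by $O(\Delta t^{p+1})$ per step, i.e.\ by a method of order $p$, so the limiting scheme retains order $q$ exactly when $p\ge q$, matching the second- and third-order sub-cases. I expect the main obstacle to be the indeterminate stiff product in the step update; once it is dispatched by the ISA telescoping identity and the null-space balance $R^{(i)}=R^n v_i$, the remainder is careful but routine expansion.
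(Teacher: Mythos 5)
Your proposal is correct and takes essentially the same route as the paper's proof: your null-space balance $R^{(i)}=R^n v_i$ is exactly the paper's identity \eqref{dxVfU} rewritten via Remark~\ref{eAa}, your ISA telescoping identity is the paper's \eqref{sch_f1}, and your successive bookkeeping under the two extra coefficient conditions reproduces the paper's bootstrap through \eqref{AP_est3}. No gaps.
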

The GSA result above was already obtained in \cite{DP13}. Here we include its proof for completeness. The ARS(2,2,2) and ARS(4,4,3) schemes are GSA, hence they are AP. The second bullet above is new and identifies a class of schemes that are also AP. In fact, the BHR(5,5,3)* scheme belongs to this category and satisfies all conditions, hence it is AP. 

\begin{proof}
First of all, as $\varepsilon\rightarrow 0$, the second line of \eqref{sch_f} implies $\sum_{j=1}^i a_{i j}(f(U^{(j)})-V^{(j)})=0$. Since the scheme is CK, using the notation in (\ref{CK}) and that $\hat{A}$ is invertible, this can be written as
\begin{equation}\label{dxVfU}
\hat{\vec{V}}-f(\hat{\vec{U}}) =-\hat{A}^{-1}\vec{a}(V^n-f(U^n)),
\end{equation}
where $\hat{\vec{V}}:=(V^{(2)},\dots,V^{(s)})^\top,\,\hat{\vec{U}}:=(U^{(2)},\dots,U^{(s)})^\top$.
Given the consistent initial condition, we see that within the first time step $V^{(i)}=f(U^{(i)})$, $i=2,\dots,s$.

If the scheme is GSA, we have $U^{n+1}=U^{(s)},V^{n+1}=V^{(s)}$. Thus induction yields $V^n=f(U^n)$ and $V^{(i)}=f(U^{(i)})$ for all $n\geq 1$ and associated $i$ stage. This finishes the proof of bullet 1.

We now turn to bullet 2. Note that (\ref{dxVfU}) and $\vec{e}_s^\top \hat{A}^{-1}\vec{a}=0$ implies $V^{(s)}=f(U^{(s)})$. Using that the scheme is ISA, we can write
\begin{equation}\label{sch_f1}
U^{n+1} = U^{(s)} -\Delta t \sum_{j=1}^{s} (\tilde{b}_j-\tilde{a}_{s j}) \partial_x V^{(j)}, \quad V^{n+1} = V^{(s)} -\Delta t \sum_{j=1}^{s} (\tilde{b}_j-\tilde{a}_{s j}) \partial_x U^{(j)}. 
\end{equation}
Then we have
\begin{equation*}
U^{n+1} - U^{(s)} = O(\Delta t),\quad  V^{n+1} - V^{(s)}=O(\Delta t).
\end{equation*}
Hence
\begin{equation} \label{vV}
V^{n+1}-f(U^{n+1})=V^{n+1}-V^{(s)}+f(U^{(s)})-f(U^{n+1})=O(\Delta t), \quad n\geq 0.
\end{equation}
Using again (\ref{dxVfU}), we have for the associated $i$ stage
\begin{equation} \label{V1}
V^{(i)}-f(U^{(i)})=O(\Delta t).
\end{equation}

Now we assume $\sum_{j=1}^s(\tilde{b}_j-\tilde{a}_{sj})=0$. The first equation of \eqref{sch_f1} gives
\begin{equation}\label{AP_est3}
U^{n+1} - U^{(s)} =  -\Delta t \sum_{j=1}^{s} (\tilde{b}_j-\tilde{a}_{s j}) \partial_x (V^{(j)}-f(U^{(j)})) - \Delta t \sum_{j=1}^{s} (\tilde{b}_j-\tilde{a}_{s j}) \partial_x f(U^{(j)}),
\end{equation}
where the first term is $O(\Delta t^2)$ due to \eqref{V1}. To estimate the second term, we have $U^{(j)}-U^n=O(\Delta t)$ from (\ref{sch_f}), then
$$
\sum_{j=1}^{s} (\tilde{b}_j-\tilde{a}_{s j}) \partial_x f(U^{(j)})=\sum_{j=1}^{s} (\tilde{b}_j-\tilde{a}_{s j})  \partial_x (f(U^{(j)})-f(U^n))=O(\Delta t).
$$
Together we have $U^{n+1}-U^{(s)}=O(\Delta t^2)$. Similarly the second equation of \eqref{sch_f1} gives
\begin{equation*}
V^{n+1} - V^{(s)} =  -\Delta t \sum_{j=1}^{s} (\tilde{b}_j-\tilde{a}_{s j}) \partial_x (U^{(j)}-U^n)=O(\Delta t^2).
\end{equation*}
The same argument as in (\ref{vV})-(\ref{V1}) yields $V^{n+1}-f(U^{n+1})=O(\Delta t^2)$ for $n\geq 0$, and for the associated stage
\begin{equation} \label{V2}
V^{(i)}-f(U^{(i)})=O(\Delta t^2).
\end{equation}

Now we further assume $\sum_{j=1}^{s}(\tilde{b}_j-\tilde{a}_{s j})\tilde{c}_j=0$. We will still use (\ref{AP_est3}) to do the estimate. Due to (\ref{V2}), the first term on the right hand side of (\ref{AP_est3}) is $O(\Delta t^3)$. To estimate the second term, we have from (\ref{sch_f})
\begin{equation*}
U^{(j)} = U^n -\Delta t \sum_{k=1}^{j-1} \tilde{a}_{jk} \partial_x V^{(k)}=U^n -\Delta t \sum_{k=1}^{j-1} \tilde{a}_{jk} \partial_x (V^{(k)}-V^n)-\Delta t \tilde{c}_j \partial_xV^n=U^n -\Delta t \tilde{c}_j \partial_xV^n+O(\Delta t^2),
\end{equation*}
where we used $V^{(j)}-V^n=V^{(j)}-f(U^{(j)})+f(U^{(j)})-f(U^n)+f(U^n)-V^n=O(\Delta t)$.
Hence 
$$
f(U^{(j)})=f(U^n)-\Delta t \tilde{c}_jf'(U^n)\partial_xV^n+O(\Delta t^2),
$$
then
\begin{equation*}
\sum_{j=1}^{s} (\tilde{b}_j-\tilde{a}_{s j}) \partial_x f(U^{(j)})=\sum_{j=1}^{s} (\tilde{b}_j-\tilde{a}_{s j}) \partial_x f(U^n)-\Delta t \sum_{j=1}^{s} (\tilde{b}_j-\tilde{a}_{s j}) \tilde{c}_j \partial_x(f'(U^n)\partial_xV^n)+O(\Delta t^2)=O(\Delta t^2).
\end{equation*}
Together we have $U^{n+1}-U^{(s)}=O(\Delta t^3)$. Similarly the second equation of \eqref{sch_f1} gives
\begin{equation*} 
V^{n+1} - V^{(s)} =-\Delta t \sum_{j=1}^{s} (\tilde{b}_j-\tilde{a}_{s j}) \partial_x (U^n -\Delta t \tilde{c}_j \partial_xV^n)+O(\Delta t^3)=O(\Delta t^3).
\end{equation*}
The same argument as in (\ref{vV})-(\ref{V1}) yields $V^{n+1}-f(U^{n+1})=O(\Delta t^3)$ for $n\geq 0$, and $V^{(i)}-f(U^{(i)})=O(\Delta t^3)$ for the associated stage.
\end{proof}

\section*{Acknowledgement}
The work of J. Hu was partially supported by NSF DMS-2153208, AFOSR FA9550-21-1-0358, and DOE DE-SC0023164. The work of R. Shu was supported by the Advanced Grant Nonlocal-CPD (Nonlocal PDEs for Complex Particle Dynamics: Phase Transitions, Patterns and Synchronization) of the European Research Council Executive Agency (ERC) under the European Union's Horizon 2020 research and innovation programme (grant agreement No. 883363). Both authors would like to thank the Isaac Newton Institute for Mathematical Sciences, Cambridge, for support and hospitality during the program `Frontiers in kinetic theory - KineCon 2022' where part of work on this paper was undertaken.

\bibliographystyle{plain}
\bibliography{hu_bibtex}

\end{document}